\newcommand{\inv}[1]{#1^{-1}}
\newcommand{\conc}{\text{Condition } \mathrm{(C)}}
\newcommand{\core}{C}
\newcommand{\conch}[1]{\text{Condition (C)}_{#1}}
\newcommand{\czero}{\mathrm{CAT}(0)}
\newcommand{\h}{\mathcal H}
\newcommand{\Isom}{\text{Isom}}
\newcommand{\Stab}{\text{Stab}}
\newcommand{\comm}[2]{N_{#1}[#2]}
\newcommand{\rk}[1]{\mathrm{rk}(#1)}
\newcommand{\acts}{\curvearrowright}
\newcommand{\minset}[1]{\mathrm{Min}(#1)}
\newcommand{\pione}[1]{\pi_1(#1)}
\newcommand{\Gam}{G}
\numberwithin{equation}{section}
\theoremstyle{plain}
\newtheorem{lem}[equation]{Lemma}
\newtheorem{prop}[equation]{Proposition}
\newtheorem{thm}[equation]{Theorem}
\newtheorem{cor}[equation]{Corollary}
\newtheorem{ques}[equation]{Question}
\theoremstyle{definition}
\newtheorem{defn}[equation]{Definition}
\newtheorem{remark}[equation]{Remark}
\newtheorem*{claim}{Claim}
\begin{document}

\title[Commensurators of abelian subgroups]{Commensurators of abelian subgroups in CAT(0) groups}

\author[J. Huang]{Jingyin Huang}

\address{Department of Mathematics, The Ohio State University, 100 Math Tower, 231 W 18th Ave, Columbus, OH 43210, U.S.}

\email{huang.929@osu.edu}

\author[T. Prytu{\l}a]{Tomasz Prytu{\l}a}

\address{Max Planck Institute for Mathematics, Vivatsgasse 7, 53111 Bonn, Germany}

\email{prytula@mpim-bonn.mpg.de}

\subjclass[2000]{Primary 20F65; Secondary 20F67}

\keywords{Commensurator, CAT(0) group, abelian subgroup, Hadamard manifold, CAT(0) cube complex, Bredon dimension}

\begin{abstract}
We study the structure of the commensurator of a virtually abelian subgroup $H$ in $G$, where $G$ acts properly on a $\mathrm{CAT}(0)$ space $X$. When $X$ is a Hadamard manifold and $H$ is semisimple, we show that the commensurator of $H$ coincides with the normalizer of a finite index subgroup of $H$. When $X$ is a $\mathrm{CAT}(0)$ cube complex or a thick Euclidean building and the action of $G$ is cellular, we show that the commensurator of $H$ is an ascending union of normalizers of finite index subgroups of $H$. We explore several special cases where the results can be strengthened and we discuss a few examples showing the necessity of various assumptions. Finally, we present some applications to the constructions of classifying spaces with virtually abelian stabilizers.

\end{abstract}

\maketitle

\section{Introduction}
\label{sec:intro}
\subsection*{Background and motivation}
We say that two subgroups $H_1, H_2$ of a group $G$  are \emph{commensurable} if $H_1 \cap H_2$ has finite index in both $H_1$ and $H_2$. The \emph{commensurator} of $H$ in $G$, denoted by $\comm{G}{H}$, is a subgroup consisting of all elements $g \in G$ such that $gHg^{-1}$ and $H$ are commensurable. In this article we would like to understand $\comm{G}{H}$ when $H$ is virtually abelian and $G$ acts properly on a $\czero$ space $X$.


One motivation for studying such commensurators comes from the connection between some of their properties and the topology of classifying spaces of $G$ with respect to families of virtually abelian subgroups \cite{MR2900176}. 

Another motivation comes from $\czero$ geometry. For $\czero$ groups, the normalizers of their abelian subgroups are well-understood and they play a fundamental role in the theory of $\czero$ groups \cite{bh}. However, the commensurators of abelian subgroups are much more mysterious and they contain subtle information of the action which is not seen by the normalizers.

The commensurator $\comm{G}{H} \le G$ contains normalizers of finite index subgroups of $H$. It is therefore natural to ask how far the commensurator is from being a normalizer. In general $\comm{G}{H}$ may not be finitely generated for a $\czero$ group~$G$; such an example can be found in Wise's work on irreducible lattices acting on product of trees \cite{wise}, we refer to Proposition~\ref{prop:anti torus} for an explanation. On the other hand, the normalizer of $H$ is always finitely generated \cite{bh}. Thus we ask about finitely generated subgroups of the commensurator instead. This leads to the following, which is a generalization of L{\"u}ck's $\conc$ for cyclic subgroups \cite{luckcat0}.

\begin{defn}[$\conc$]We say that a virtually abelian subgroup $H$ of $G$ satisfies $\conc$ if every finitely generated subgroup $K \le \comm{G}{H}$ normalizes some finite index subgroup of $H$. A group $G$ satisfies $\conc$ if each of its finitely generated virtually abelian subgroup satisfies $\conc$.
\end{defn}

If $\conc$ holds for $H$, then $\comm{G}{H}$ is an ascending union of normalizers of finite index subgroups of $H$. Note that if $\comm{G}{H}$ does not have pathologies of type I (not being finitely generated) and type II (not satisfying $\conc$), then $\comm{G}{H}$ is equal to the normalizer of a finite index subgroup of $H$. 

Recently, Leary and Minasyan gave an example of a $\czero$ group which does not satisfy $\conc$ \cite{LM}, so both types of pathologies can occur for $\czero$ groups. However, there are many natural classes of $\czero$ groups where such pathologies can be eliminated due to certain geometric or combinatorial structure of these groups, which we will discuss below. We will also indicate several results for general $\czero$ groups, including an application to Bredon cohomological dimension for virtually abelian stabilizers.


\subsection*{Structure of the commensurators}
A group $G$ acts \emph{geometrically} on a metric space $X$ if it acts isometrically, properly and cocompactly. 
A subgroup $H \leq G$ is \emph{semisimple} if each element of $H$ is a semisimple isometry of $X$.  

Recall that a \emph{Hadamard manifold} is a complete, simply connected smooth manifold with non-positive sectional curvature. For groups acting geometrically on Hadamard manifolds, none of the above pathologies occurs. Actually, a slightly more general result holds.

\begin{thm}[Theorem~\ref{thm:hadamard}]\label{introthm:hadamard}
	Suppose $G$ acts properly on a Hadamard manifold $X$ by isometries. Let $H\le G$ be a semisimple finitely generated virtually abelian subgroup. Then $\comm{G}{H}$ is equal to the normalizer of a finite index subgroup of $H$. In particular, if the action $G\acts X$ is geometric then $\comm{G}{H}$ is finitely generated.
\end{thm}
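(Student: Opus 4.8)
The plan is to reduce to the case of a free abelian group and then exploit the compactness of the orthogonal group $O(n)$, which is the feature of the Hadamard manifold setting that is unavailable for cube complexes or buildings. First I would replace $H$ by a finite-index free abelian subgroup $A\cong\Z^n$; since commensurable subgroups have equal commensurators, $\comm{G}{H}=\comm{G}{A}$, and a finite-index subgroup of $A$ is a finite-index subgroup of $H$, so it suffices to treat $A$. As $A$ is semisimple, the Flat Torus Theorem applies: the minimal set $\minset{A}$ is nonempty and splits as $Y\times\E^n$, with $A$ acting trivially on $Y$ and as a cocompact lattice $\Lambda$ of translations on $\E^n$. The subset canonically attached to the \emph{commensurability class} of $A$ is not $\minset{A}$ but the parallel set $P$ of the flat $\E^n$ (the union of all $n$-flats asymptotic to it), which splits as $P\cong\E^n\times Z$. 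The first key step is to show that every $g\in\comm{G}{A}$ preserves $P$ together with this splitting. For this I would use the boundary sphere $\sigma_A=\partial_\infty\E^n\cong S^{n-1}\subseteq\partial_\infty X$: if $g$ commensurates $A$ then $A\cap gAg^{-1}$ has full rank, which forces $g\sigma_A=\sigma_A$, and preserving the boundary sphere of the Euclidean factor forces $g$ to preserve $P=\E^n\times Z$ and to respect the product.

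Having done this, I obtain a homomorphism $\rho\colon\comm{G}{A}\to\Isom(\E^n)=O(n)\ltimes\R^n$ recording the action on the flat factor, with orthogonal part $\bar\rho\colon\comm{G}{A}\to O(n)$. Here $\rho(A)=\Lambda$ is a lattice of translations and $\bar\rho(A)=1$. For $g\in\comm{G}{A}$ the group $gAg^{-1}$ again has boundary sphere $\sigma_A$ and acts on $P$ as the translation lattice $\bar\rho(g)\Lambda$ (trivially on $Z$), so commensurability of $A$ and $gAg^{-1}$ becomes the statement that $\bar\rho(g)\Lambda$ is commensurable with $\Lambda$. The plan is then the following: once I know that $R:=\bar\rho(\comm{G}{A})\le O(n)$ is \emph{finite}, I set $\Lambda_0:=\bigcap_{D\in R}D\Lambda$, a finite-index $R$-invariant sublattice, and let $\widehat A$ be the subgroup of $\comm{G}{A}$ acting on $\E^n$ by translations in $\Lambda_0$ and trivially on $Z$. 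Properness of the action makes the pointwise stabilizer of the flat finite, so $\widehat A$ is finite-by-$\Z^n$ and commensurable with $A$; the $R$-invariance of $\Lambda_0$ shows that $\widehat A$ is normalized by all of $\comm{G}{A}$. Then $\comm{G}{A}\le N_G(\widehat A)\le\comm{G}{\widehat A}=\comm{G}{A}$, giving equality, and a routine intersection argument extracts an honest finite-index subgroup of $H$ with the same normalizer.

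The main obstacle is proving that $R=\bar\rho(\comm{G}{A})$ is finite; this is exactly where the Hadamard manifold hypothesis does the work, since for a general $\czero$ space it fails (cf. Wise's anti-tori). Because $O(n)$ is compact, finiteness is equivalent to discreteness, so the task reduces to ruling out a sequence $g_i\in\comm{G}{A}$ with $\bar\rho(g_i)\to 1$ and $\bar\rho(g_i)\ne 1$. After multiplying by suitable elements of $A$ I may assume the translation parts on $\E^n$ are bounded; the delicate issue is that the $g_i$ may still move points arbitrarily far in the $Z$-direction, so properness on $X$ does not by itself produce a contradiction. Resolving this is where I expect to spend most of the effort: I would use the smooth structure of $X$ — through the action of $\comm{G}{A}$ on the Tits boundary sphere $\sigma_A$ and the tangential behaviour of the flat — to show that an element whose orthogonal part is sufficiently close to the identity must already preserve a fixed finite-index sublattice of $\Lambda$, which forces $R$ to be discrete in $O(n)$ and hence finite.

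Once finiteness of $R$ is established the remainder is bookkeeping, as sketched above, yielding $\comm{G}{H}=\comm{G}{A}=N_G(H_0)$ for a suitable finite-index subgroup $H_0\le H$. Finally, the assertion that $\comm{G}{H}$ is finitely generated when the action $G\acts X$ is geometric follows from the identification with a normalizer, since normalizers of virtually abelian subgroups are finitely generated for groups acting geometrically on $\czero$ spaces.
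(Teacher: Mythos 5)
Your plan has a genuine gap, in fact two. The first is the one you flag yourself: the finiteness of $R=\bar\rho(\comm{G}{A})\le O(n)$ is deferred, and this is not a technical step to be filled in later — it is essentially the entire content of the theorem. Moreover, the route you sketch for it cannot work in the form stated: the set of orthogonal matrices $D$ with $D\Lambda$ commensurable to $\Lambda$ is \emph{dense} in $O(n)$ for $n\ge 2$ (this is exactly the Leary--Minasyan phenomenon of Definition~\ref{def:LM}), so no statement of the form ``orthogonal part sufficiently close to the identity implies preservation of a fixed finite-index sublattice'' can follow from the lattice data on the flat; any proof of discreteness must use the perpendicular factor and the smooth structure in an essential way that your sketch does not specify.

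The second gap is that even granting finiteness of $R$, your ``bookkeeping'' does not close. Finiteness of the orthogonal image alone does \emph{not} imply the conclusion: in Wise's example (Proposition~\ref{prop:anti torus}) the image of $\Phi$ has order at most two, yet $\comm{G}{H}$ normalizes no finite-index subgroup of $H$. The precise point where your argument breaks is the claim that $\widehat A$ (elements acting by $\Lambda_0$-translations on $\mathbb E^n$ and trivially on $Z$) is commensurable with $A$. Elements of $A$ act trivially only on the subset $Y\subseteq Z=F^{\perp}$ coming from $\minset{A}$, not on all of $Z$: for instance, take $G=A=\Z$ acting on $\mathbb E^3=\mathbb E^1\times\mathbb E^2$ by the screw motion $(x,w)\mapsto(x+1,R_\theta w)$ with $\theta$ irrational; this is a proper semisimple action, $Z\cong\mathbb E^2$, and no nontrivial element of $A$ acts trivially on $Z$, so $\widehat A\cap A$ is trivial. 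Proving that $\widehat A$ is commensurable with $A$ in the Hadamard setting is thus essentially equivalent to the theorem itself. The paper's proof resolves exactly this by working on $F^{\perp}$ rather than on $F$: it introduces the core $\core\subset F^{\perp}$ of points whose stabilizer is commensurable with $H$, finds inside it a totally geodesic smooth submanifold $N$ with $N\subset\core\subset\bar N$ (a Cheeger--Gromoll-type lemma), and uses a frame argument together with the rigidity of Riemannian isometries (an isometry fixing a point with trivial derivative is the identity) to show that the kernel of $\comm{G}{H}\to\Isom(N)$ is commensurable with $H$; being a kernel it is normal, and a characteristic finite-index subgroup of it inside $H$ gives the normalizer statement. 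In that argument the finiteness of $\Phi(\comm{G}{H})$ is a \emph{corollary} of the theorem, not an input, which is why the paper never has to prove discreteness in $O(n)$ directly.
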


This theorem fails if we relax the assumption of `Hadamard manifold' to `complete $\czero$ manifold without boundary', see Corollary~\ref{cor:LM cor}. We remark that Theorem~\ref{introthm:hadamard} gives obstructions to (virtually) embed a group into a fundamental group of a non-positively curved smooth closed manifold. 


For singular $\czero$ spaces the structure of commensurators is generally complicated, even if the space admits a piecewise Euclidean structure. However, certain types of piecewise Euclidean structures give rise to rigidity of commensurators.

\begin{thm}[Theorem~\ref{thm:cubicalpropc} and Proposition~\ref{prop:symmetricspace}]
	\label{introthm:symmetricspace}
Suppose $G$ acts properly on a $\czero$ space $X$. Let $H\le G$ be a finitely generated virtually abelian subgroup. Suppose one of the following is satisfied.
\begin{enumerate}
	\item $X$ is a finite dimensional $\czero$ cube complex and $G$ acts on $X$ by cubical automorphisms. 
	\item $X=X_1\times X_2\times\cdots \times X_n$ such that each $X_i$ is either a nonflat irreducible symmetric space of noncompact type or an irreducible thick Euclidean Tits building with cocompact affine Weyl group, and $H$ is semisimple.
\end{enumerate}
Then any finitely generated subgroup of $\comm{G}{H}$ normalizes a finite index subgroup of $H$. 
\end{thm}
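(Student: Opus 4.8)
The plan is to reduce to a clean free abelian model, extract from the ambient geometry a single structural lemma bounding the ``rotational'' part of the commensurator action on a flat, and then close with soft group theory. First I would pass from $H$ to a finite-index subgroup $A\le H$ with $A\cong\Z^n$ free abelian; note that $\comm{G}{H}=\comm{G}{A}$ (commensurable subgroups have equal commensurators) and that any finite-index subgroup of $A$ is finite-index in $H$, so it suffices to treat $A$. In case (2) the group $A$ is semisimple by hypothesis, while in case (1) semisimplicity is automatic since cubical automorphisms of a finite-dimensional $\czero$ cube complex are semisimple. By the Flat Torus Theorem, $\minset{A}=F\times Y$ with $F\cong\E^n$ and $A$ acting on $F$ as a cocompact lattice of translations $L$. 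The key general observation is that for $g\in\comm{G}{A}$ the flat $gF$ is parallel to $F$: indeed $A\cap gAg^{-1}$ is finite-index in $A$ and acts cocompactly by translations on both $F$ and $gF$, and two flats carrying a common cocompact translation lattice are parallel. Hence every such $g$ preserves the parallel set $P_F$ and its canonical splitting $P_F=F\times T_F$, yielding a homomorphism $\rho\colon\comm{G}{A}\to\Isom(F)$ together with its linear part $\beta\colon\comm{G}{A}\to O(n)$; moreover $\rho|_A$ is injective with image $L$.

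The heart of the matter, and the step I expect to be the main obstacle, is the following \emph{Key Lemma}: the image $\beta(\comm{G}{A})$ is finite. This is exactly the property that can fail for general proper $\czero$ actions (cf.\ \cite{LM}), so it is where hypotheses (1) and (2) are indispensable, and it is here that properness does the essential work by forbidding a non-discrete accumulation of translation directions. Each $\beta(g)$ preserves the commensurability class of $L$, so lies in $O(n)\cap GL_n(\mathbb{Q})$, which is infinite in general; the special geometry must supply the missing constraint. In case (2) the flat $F$ sits inside a maximal flat of $X_1\times\cdots\times X_n$, which intrinsically carries the restricted root system (symmetric space factors) or the affine wall arrangement of the apartments (building factors); any ambient isometry preserving the parallel class of $F$ permutes roots/walls, so $\beta(g)$ lands in the finite automorphism group of this root system, a product of finite Weyl-type groups. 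In case (1) the flat $F$ is parallel to a cubical flat, and a cubical automorphism permutes the hyperplanes crossing it, forcing $\beta(g)$ into the finite hyperoctahedral group of signed coordinate permutations. Verifying these combinatorial/structural facts rigorously (producing the relevant cubical flat, respectively identifying the wall structure on the apartment) is the genuine work.

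Granting the Key Lemma, the finish is uniform and formal. Fix a finitely generated $K\le\comm{G}{A}$ and set $K'=\ker(\beta|_K)$, a finite-index normal subgroup of $K$ (hence finitely generated), each element of which acts on $F$ by a translation. For $g\in K'$ and $a\in A\cap gAg^{-1}$, write $a=gbg^{-1}$ with $b\in A$; applying $\rho$ and using that translations commute gives $\rho(a)=\rho(g)\rho(b)\rho(g)^{-1}=\rho(b)$, and injectivity of $\rho|_A$ forces $a=b$, so $g$ centralizes $a$. Intersecting over the finitely many generators of $K'$ yields a finite-index subgroup $A''\le A$ centralized by $K'$. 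Consequently $kA''k^{-1}$ depends only on the coset $kK'$, so $\{kA''k^{-1}:k\in K\}$ is finite.

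Finally, set $A_\infty=\bigcap_{k\in K}kA''k^{-1}$. Since only finitely many distinct conjugates occur and each is commensurable with $A''$ (as $k\in\comm{G}{A}=\comm{G}{A''}$), this is a finite intersection of finite-index subgroups of $A''$, hence $A_\infty$ has finite index in $A''$ and therefore in $H$. Taking $k=e$ shows $A_\infty\subseteq A''\subseteq H$, and reindexing shows $kA_\infty k^{-1}=A_\infty$ for all $k\in K$. Thus $A_\infty$ is a finite-index subgroup of $H$ normalized by $K$, which is precisely the assertion. The only non-soft ingredient is the Key Lemma; everything preceding and following it is independent of which of (1), (2) holds.
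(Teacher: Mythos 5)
Your scaffolding is exactly the paper's own machinery, and that part of the proposal is correct: the parallelism of $gF$ and $F$ and the resulting splitting of the $\comm{G}{H}$--action on $P_F=F\times F^\perp$ is Lemma~\ref{lem:split}; your $\rho$ and $\beta$ are the maps $\phi$ and $\Phi$ of Definition~\ref{def:honolomy}; and your endgame (elements in $\ker\beta$ act as translations, hence centralize $A\cap gAg^{-1}$ by the commuting-translations trick and injectivity of $\rho|_A$, intersect over the finitely many generators, then intersect the finitely many $K$--conjugates of the resulting subgroup) is precisely Proposition~\ref{prop:normalize} together with Lemma~\ref{lem:ftindex}. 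Your side remark that semisimplicity is automatic in case (1) is also fine, by Bridson's semisimplicity theorem for cellular isometries of complexes with finitely many shapes, and is implicitly used by the paper as well.

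The genuine gap is the Key Lemma, which you correctly identify as the heart but do not prove: it is exactly the content of Theorem~\ref{thm:cubicalpropc} and Proposition~\ref{prop:symmetricspace}, i.e.\ the only place where hypotheses (1) and (2) enter and where the Leary--Minasyan examples show something must be proved. In case (1) your route starts from the claim that $F$ is parallel to a \emph{cubical} flat; this is unjustified, and it is not a passing remark --- statements of this kind are cubical flat torus theorems (Wise--Woodhouse), proved under additional hypotheses, and the paper deliberately avoids any such claim. Instead it works with the traces $h\cap F$ of the hyperplanes $\h(F)$ crossing $F$ (Lemma~\ref{lem:intersection1}): non-parallel traces force the corresponding hyperplanes to cross, so finite dimensionality yields finitely many parallelism classes of traces, whose normal directions span $F$ (Lemma~\ref{lem:span}); the finest orthogonal partition argument (Lemma~\ref{lem:finite index1}) then gives a finite index subgroup of $\comm{G}{H}$ preserving each class, whose $\Phi$--image sends each normal direction to plus or minus itself and hence is finite. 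In case (2) your sketch is closer to the paper, but it glosses over the real issue: $\partial_T F$ is in general neither an apartment nor even a subcomplex of $\partial_T X$, so one must explain why the induced action on $\partial_T F$ is constrained by the building structure. The paper does this via thickness (so that boundary maps of ambient isometries respect the polyhedral structure) and by passing to the \emph{smallest} subcomplex sphere $S$ containing $\partial_T F$, which is canonically determined and hence invariant, and then invoking finiteness of the group of polyhedral isometries of $S$. Until these two arguments (or substitutes) are supplied, the proof is incomplete precisely at its load-bearing step.
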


Note that the conclusion is slightly weaker than in Theorem~\ref{introthm:hadamard}. In both cases of Theorem~\ref{introthm:symmetricspace}, there are examples of $\comm{G}{H}$ being not finitely generated, even if the action of $G$ on $X$ is cocompact. Moreover, one cannot remove the `finitely generated' assumption from the last sentence of Theorem~\ref{introthm:symmetricspace}, that is, the commensurator $\comm{G}{H}$ may not normalize any finite index subgroup $H'\leq H$, as shown in Proposition~\ref{prop:anti torus}. However, such assumption can be removed when the action of $G$ on a cube complex is virtually special, see Corollary~\ref{cor:centralizer}.





We now look at more general $\czero$ spaces. A finitely generated virtually abelian subgroup is \emph{highest}, if it does not have a finite index free abelian subgroup that lies in a free abelian subgroup of higher rank.

\begin{prop}[Proposition~\ref{prop:highestvab}]\label{introprop:highestvab}
Let $G$ be a group acting geometrically on a $\czero$ space and suppose $H$ is a highest virtually abelian subgroup of $G$. Then $\comm{G}{H}$ contains $H$ as a finite index subgroup. In particular $\comm{G}{H}$ is finitely generated and it normalizes a finite index subgroup of $H$.
\end{prop}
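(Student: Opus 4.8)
The plan is to reduce everything to the geometry of an internal free abelian subgroup and the Flat Torus Theorem. Since $G\acts X$ is geometric, every element of $G$ is a semisimple isometry, so $H$ is semisimple; passing to a finite index free abelian subgroup $A\cong\Z^n\le H$ (possible as $H$ is finitely generated virtually abelian), the Flat Torus Theorem provides a closed convex $A$-invariant subset $\minset{A}\cong Y\times\E^n$ on which $A$ acts trivially on $Y$ and as a cocompact lattice $\Lambda_A$ of translations on $\E^n$. Writing $N:=\comm{G}{H}$, the proposition amounts to showing that $N$ is virtually $\Z^n$ with $A$ (equivalently $H$) of finite index; the remaining assertions then follow formally, since finite generation is automatic for virtually $\Z^n$ groups and $N$ normalizes the normal core of $H$ in $N$, a finite index subgroup of $H$.

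First I would record that $N$ preserves $\minset{A}$ together with its splitting. For $g\in N$ the subgroups $gAg^{-1}$ and $A$ are commensurable, whence $g\cdot\minset{A}=\minset{gAg^{-1}}=\minset{A}$, using that the minimal set of a free abelian group depends only on its commensurability class. Since the Euclidean factor is spanned by the translation directions of $A$, and these have the same rational span as those of $gAg^{-1}$, the element $g$ preserves the product decomposition, giving a homomorphism $\rho\colon N\to\Isom(Y)\times\Isom(\E^n)$.

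Granting for the moment that $Y$ is bounded, I would finish as follows. The action $N\acts Y\times\E^n$ is proper, being the restriction of the proper action $G\acts X$ to an invariant subset, and boundedness of $Y$ forces the projection $\rho_E\colon N\to\Isom(\E^n)$ to have finite fibres over bounded sets; hence its kernel is finite and the image $\overline N:=\rho_E(N)$ acts properly, so $\overline N$ is a discrete subgroup of $\Isom(\E^n)$. As $\overline N$ contains the cocompact lattice of translations $\Lambda_A$, Bieberbach's theorem shows $\overline N$ is virtually $\Z^n$ with $\Lambda_A$ of finite index, and therefore $N$ itself is virtually $\Z^n$ with $A\le H\le N$ of finite index, as required.

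The main obstacle is thus establishing that $Y$ is bounded, and this is exactly where the hypothesis that $H$ is highest enters. I would use the fact, standard for geometric actions, that the centralizer $Z_G(A)$ acts cocompactly on $\minset{A}=Y\times\E^n$; since $A$ acts trivially on $Y$, the induced action of $Z_G(A)$ on $Y$ is cocompact as well. If $Y$ were unbounded, this cocompact action on a complete unbounded $\czero$ space would contain an infinite order semisimple element $\overline z$ acting as a hyperbolic isometry of $Y$; lifting to $z\in Z_G(A)$, no nontrivial power of $z$ can lie in $A$, since $z$ translates along $Y$ while $A$ fixes $Y$ pointwise. Because $z$ commutes with $A$, the group $\langle A,z\rangle$ is free abelian of rank $n+1$ and contains the finite index free abelian subgroup $A\le H$, contradicting the assumption that $H$ is highest. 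Hence $Y$ is bounded. The delicate points to verify carefully are the cocompactness of $Z_G(A)\acts\minset{A}$ and the extraction of a hyperbolic isometry of $Y$ whose lift genuinely enlarges the rank; everything else is a formal consequence of properness and the Flat Torus Theorem.
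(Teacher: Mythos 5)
Your argument breaks at its very first structural step: the claim that $\comm{G}{H}$ preserves $\minset{A}$ ``because the minimal set of a free abelian group depends only on its commensurability class.'' This is false. For a finite index subgroup $A'\le A$ one only has $\minset{A}\subseteq\minset{A'}$, and the inclusion can be strict, so commensurable subgroups can have genuinely different minimal sets. Concretely, let $a$ be the screw motion $(t,v)\mapsto(t+1,-v)$ of $\E^3=\E^1\times\E^2$ and $b$ the translation $(t,v)\mapsto(t,v+w)$, $w\neq 0$; then $G=\langle a,b\rangle$ (a Klein bottle group) acts properly by semisimple isometries on $\E^3$, and $b$ commensurates $H=\langle a\rangle$ (indeed $ba^2b^{-1}=a^2$), yet $b\cdot\minset{H}=\E^1\times\{w\}\neq\E^1\times\{0\}=\minset{H}$, since $\minset{bab^{-1}}$ is the axis of the screw motion $(t,v)\mapsto(t+1,-v+2w)$. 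So the homomorphism $\rho\colon N\to\Isom(Y)\times\Isom(\E^n)$ you build does not exist. The object that $\comm{G}{H}$ actually preserves is the parallel set $P_F=F\times F^{\perp}$ of a flat $F$ on which $A$ acts cocompactly; this is Lemma~\ref{lem:split} of the paper, and its proof goes through parallelism of flats, not through minimal sets.

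The gap also cannot be bridged by simply substituting $P_F$ for $\minset{A}$. Your endgame (finite kernel, discrete image in $\Isom(\E^n)$, Bieberbach) needs the orthogonal factor to be \emph{bounded}, and while your argument that the factor $Y$ of $\minset{A}$ is bounded is essentially correct --- it is in substance the paper's Lemma~\ref{lem:highest} --- the factor $F^{\perp}$ of $P_F$, on which the commensurator genuinely acts, can be unbounded even when $H$ is highest. This is exactly the Rattaggi--Robertson phenomenon recorded in Proposition~\ref{prop:highest}: a highest $\Z$--subgroup whose invariant geodesic $F$ lies inside a $2$--flat, so that $F^{\perp}$ contains a line while $Y$ is bounded. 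The paper flags this explicitly by remarking that Proposition~\ref{prop:highestvab} is \emph{not} a consequence of Remark~\ref{rem:bounddiam}, and Remark~\ref{rem:bounddiam} is precisely your ``granting $Y$ is bounded'' paragraph. For this reason the paper's proof avoids the bounded-factor route entirely: it uses the holonomy homomorphism $\Phi\colon\comm{G}{H}\to O(n,\R)$ of Definition~\ref{def:honolomy}, shows via Proposition~\ref{prop:normalize} and Lemma~\ref{lem:highest} that $\ker(\Phi)$ is an ascending union of finite extensions of finite index subgroups of $H$, concludes $[\ker(\Phi):H]<\infty$ by the Ascending Chain Theorem, and then passes to a finite index subgroup $H'\le H$ that is characteristic in $\ker(\Phi)$, hence normal in $\comm{G}{H}$, so that Lemma~\ref{lem:highest} applied to $H'$ gives $[\comm{G}{H}:H']\le[N_G(H'):H']<\infty$. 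You would need an analogue of this (or some other mechanism controlling the action in the $F^{\perp}$ direction) to make your approach work.
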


The assumption of being highest cannot be removed \cite{LM}. Also note that a highest abelian subgroup might not be `highest' in a geometric sense. More precisely, there is an example by Rattaggi and Robertson \cite{RattaggiRobertson} of a highest abelian subgroup $H$ in a $\czero$ group $G$ such that $H$ acts cocompactly on a flat $F\subset X$ with $F$ being contained in a higher dimensional flat, see Proposition~\ref{prop:highest}.

We also observe that for $\czero$ groups, the examples by Leary and Minasyan \cite{LM} are the only obstructions to $\conc$, see Proposition~\ref{prop:conc obstruction}.

\subsection*{Applications to the classifying spaces for families of subgroups} Given a group $G$ and a family of subgroups $\mathcal{F}$, \emph{the classifying space of $G$ for the family} $\mathcal{F}$, denoted by $E_{\mathcal{F}}G$, is the universal $G$--CW--complex with stabilizers in $\mathcal{F}$. Classifying spaces for families appear in Baum-Connes and Farrell-Jones isomorphism conjectures in $K$--theory and they can be used to compute Bredon cohomology of $G$ \cite{lucksurv}. Therefore it is desirable to construct simple models for $E_{\mathcal{F}}G$ and in particular to bound its dimension. The minimal dimension of $E_{\mathcal{F}}G$ is called \emph{geometric dimension} of $G$ for the family $\mathcal{F}$, and is denoted by $\mathrm{gd}_{\mathcal{F}}G$. There is an algebraic counterpart of geometric dimension called \emph{Bredon cohomological dimension} and it is denoted by $\mathrm{cd}_{\mathcal{F}}G$. These two dimensions are related by Eilenberg-Ganea-type inequality 
\begin{equation*}
\mathrm{cd}_{\mathcal{F}}G \le \mathrm{gd}_{\mathcal{F}}G \le \mathrm{max}\{3, \mathrm{cd}_{\mathcal{F}}G  \}.
\end{equation*}
The standard by now method to construct a classifying space  $E_{\mathcal{F}}G$ or to bound  $\mathrm{cd}_{\mathcal{F}}G$ is a construction due to L{\"u}ck and Weiermann~\cite{MR2900176}. One may say that the key point of that construction is the study of $\comm{G}{H}$ for subgroups $H \in \mathcal{F}$ and construction of classifying spaces for this group for certain families (simpler than~$\mathcal{F}$).

In vast majority of constructions, the following approach has been used: one first proves $\conc$ for $G$ and then approximates $\comm{G}{H}$ by normalizers of subgroups $H_i$ commensurable with $H$. Then one constructs respective classifying spaces for normalizers, as they are usually much simpler groups, and finally one reconstructs classifying space for $\comm{G}{H}$ from classifying spaces for $N_G(H_i)$ for subgroups $H_i$.

Now let $\mathcal{F}_r$ be a family of subgroups of $G$ which consists of all finitely generated virtually abelian subgroups of rank at most $r$.
Following the above procedure, the second author obtained a bound on the $\mathrm{cd}_{\mathcal{F}_r}G$ for $G$ acting properly by semisimple isometries on a proper finite dimensional $\czero$ space, assuming that $G$ satisfies $\conc$ \cite[Theorem~1.1]{cat0vab}.

However, by studying the action of $\comm{G}{H}$ more carefully, now we can remove $\conc$ from the assumptions of \cite[Theorem~1.1]{cat0vab}.

\begin{thm}[Theorem~\ref{thm:improvedcat0vab}]\label{introthm:improvedcat0vab} Let $G$ be a group acting properly by semisimple isometries on a complete proper $\czero$ space of topological dimension $n$. Then for any $0 \leqslant r \leqslant n$ we have $\mathrm{cd}_{\mathcal{F}_r}G \leq n+r+1.$
\end{thm}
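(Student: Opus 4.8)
The plan is to run the Lück--Weiermann machinery \cite{MR2900176} and induct on $r$, the new geometric input being that the commensurator preserves a canonical splitting of a minset. The induction is on $r$ and is uniform over all groups acting properly by semisimple isometries on complete proper $\czero$ spaces of arbitrary dimension, which is what will let me feed $\comm{G}{H}$ back into the hypothesis. For the base case $r=0$ we have $\mathcal{F}_0=\{\text{finite subgroups}\}$; since $X$ is complete $\czero$ and the action is proper, every finite subgroup fixes a point and the fixed-point sets are convex, so $X$ is a model for $E_{\mathcal{F}_0}G$ and $\mathrm{cd}_{\mathcal{F}_0}G\le\dim X=n\le n+1$. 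For the inductive step I would apply the Lück--Weiermann pushout to the pair $\mathcal{F}_{r-1}\subseteq\mathcal{F}_r$ with commensurability as the equivalence relation, so that for a representative $H$ of each class the relevant subgroup is $N_G[H]=\comm{G}{H}$. This yields
\[
\mathrm{cd}_{\mathcal{F}_r}G\le\max\Big\{\,\mathrm{cd}_{\mathcal{F}_{r-1}}G,\ \sup_{[H]}\mathrm{cd}_{\mathcal{F}_r[H]}\comm{G}{H},\ \sup_{[H]}\big(\mathrm{cd}_{\mathcal{F}_{r-1}\cap\comm{G}{H}}\comm{G}{H}+1\big)\,\Big\},
\]
where $\mathcal{F}_r[H]$ denotes the family of subgroups of $\comm{G}{H}$ that either lie in $\mathcal{F}_{r-1}$ or are commensurable with $H$.

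The geometric heart of the argument is the following observation, which needs no appeal to $\conc$: if $A\le H$ is a finite-index free abelian subgroup of rank $r$, then any two semisimple abelian subgroups commensurable with $A$ have the same minset, since passing to a finite-index subgroup does not change the minset. Consequently $\comm{G}{A}=\comm{G}{H}$ stabilizes $\minset{A}$, and since conjugation preserves the translation directions of $A$ up to finite index, it preserves the canonical splitting $\minset{A}\cong Y\times\E^r$ afforded by the Flat Torus Theorem, with $A$ acting as a cocompact lattice of translations on the $\E^r$ factor and trivially on $Y$. Thus $\comm{G}{H}$ acts properly by semisimple isometries on the complete proper $\czero$ space $\minset{A}$, whose topological dimension is at most $n$. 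This disposes of two of the three terms above: the first is bounded by $n+r$ by the inductive hypothesis applied to $G$ at rank $r-1$, and for the third, applying the inductive hypothesis to $\comm{G}{H}\acts\minset{A}$ at rank $r-1$ gives $\mathrm{cd}_{\mathcal{F}_{r-1}\cap\comm{G}{H}}\comm{G}{H}\le\dim\minset{A}+r\le n+r$, so the term with the extra $+1$ is bounded by $n+r+1$.

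The main obstacle is the middle term $\mathrm{cd}_{\mathcal{F}_r[H]}\comm{G}{H}$, the ``top'' family, which cannot be reduced by a further application of Lück--Weiermann because the commensurator of $H$ inside $\comm{G}{H}$ is all of $\comm{G}{H}$. This is exactly the step where \cite{cat0vab} invoked $\conc$ to realize $\comm{G}{H}$ as an ascending union of normalizers of finite-index subgroups of $H$; instead I would use the splitting $\minset{A}\cong Y\times\E^r$ directly. Writing $\rho\colon\comm{G}{H}\to\Isom(\E^r)$ for the projection induced by the splitting, with kernel $L$ acting properly on $Y$, I would assemble a model for $E_{\mathcal{F}_r[H]}\comm{G}{H}$ from the flat $\E^r$, which supplies the rank-$r$ abelian direction, together with a proper model for $L\acts Y$ of dimension $\le n-r$; combining them, and absorbing a $+1$ coming from the assembly, yields a model of dimension at most $n+1\le n+r+1$.

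The delicate point, and the place where the careful study of the action genuinely enters, is that $\rho$ need not have discrete image: the translation part of $\rho(\comm{G}{H})$ can be dense, which is precisely the anti-torus phenomenon responsible for the failure of $\conc$. Hence one cannot treat $\E^r$ as a cocompact $\comm{G}{H}$--CW complex, and the subtlety is to verify directly that the assembled space has contractible fixed-point sets exactly for the subgroups in $\mathcal{F}_r[H]$ and that all its stabilizers lie in $\mathcal{F}_r[H]$. Once this middle term is bounded by $n+r+1$, taking the maximum of the three terms completes the induction and gives $\mathrm{cd}_{\mathcal{F}_r}G\le n+r+1$.
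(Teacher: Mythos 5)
Your outer skeleton --- the L\"uck--Weiermann induction on $r$, with the three-term bound and the inductive hypothesis re-applied to the commensurator acting on an invariant convex subspace --- matches the paper's proof, which imports exactly this from \cite{cat0vab}. But there are two genuine gaps. The first is repairable: the claim you call the geometric heart is false. Passing to a finite-index subgroup does \emph{not} preserve minsets; it can strictly enlarge them. If $g$ is a glide reflection of $\E^2$ then $\minset{g}$ is a line while $\minset{g^2}=\E^2$. Concretely, let $G$ be the Klein bottle group acting on $\E^2$ by $a(x,y)=(x+1,y)$ and $b(x,y)=(-x,y+1)$ (a proper, cocompact, semisimple action), and let $H=A=\langle b\rangle$. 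Then $\comm{G}{H}=G$, but $a\minset{A}=\minset{aA\inv{a}}=\{x=1\}\neq\{x=0\}=\minset{A}$, so $\comm{G}{H}$ does not stabilize $\minset{A}$. The invariant object is the parallel set $P_F=F\times F^\perp$ of a flat $F\subseteq\minset{A}$ on which $A$ acts cocompactly; its invariance and the splitting of the action is Lemma~\ref{lem:split}, which requires the transitivity-of-parallelism argument, not equality of minsets. With $P_F$ in place of $\minset{A}$ your first and third terms do go through: $P_F$ is closed, convex and invariant, so properness restricts, and semisimplicity restricts as well (push a point of $\minset{g}$ into $P_F$ by the equivariant $1$-Lipschitz nearest-point projection).

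The fatal gap is the middle term $\mathrm{cd}_{\mathcal{F}_r[H]}\comm{G}{H}$: what you wrote is a restatement of the difficulty, not a proof, and the proposed assembly is structurally backwards. Every subgroup $B$ commensurable with $H$ lies in $\mathcal{F}_r[H]$, so in any model $Z$ the fixed set $Z^B$ must be nonempty and contractible; but the image of $B$ under the projection to $\Isom(\E^r)$ (your $\rho$, the paper's $\phi$) contains a cocompact lattice of translations, so $B$ has no fixed point in $\E^r$. Hence a product with $\E^r$ forces $Z^B=\emptyset$, while on a join the $\E^r$-end has point stabilizers containing $\ker\phi$, which in general is not in the family (it can contain free groups). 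The flat direction must be invisible in the model; what carries the family is the orthogonal direction $F^\perp$, where subgroups virtually contained in $H$ genuinely fix points. This is exactly the content of the paper's Proposition~\ref{prop:bredondimall}: restrict to the core $\core\subseteq F^\perp$ of points whose stabilizer is commensurable with $H$, which is nonempty, convex and invariant of topological dimension at most $n-r$ (Lemma~\ref{lem:core}); prove the crucial new fact that $\comm{G}{H}\acts\core$ has \emph{discrete orbits} (Lemma~\ref{lem:discreteorb}, an argument combining properness of $G\acts X$ with cocompactness of $H\acts F$); then apply the Degrijse--Petrosyan theorem \cite[Corollary~1]{DePe}, which is designed precisely for isometric actions with discrete orbits on separable finite-dimensional $\czero$ spaces --- no properness, cocompactness, discreteness of $\phi$, or $G$--CW structure needed --- and identify the family generated by the point stabilizers as $\mathrm{All}[H]$. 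This yields $\mathrm{cd}_{\mathrm{All}[H]}\comm{G}{H}\le n-r$, which is then fed into the machinery of \cite{cat0vab}, whose only use of Condition (C) was this very bound. The discrete-orbits lemma together with the Degrijse--Petrosyan input is the missing idea that your ``verify directly'' step calls for; without it or a substitute, the non-discreteness of $\phi$ that you correctly identify as the obstruction is left unaddressed. (The same issue touches your base case: $X$ need not admit a $G$--CW structure, so ``$X$ is a model for $E_{\mathcal{F}_0}G$'' also needs an input of this type rather than being immediate.)
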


Let us point out that Theorem~\ref{introthm:improvedcat0vab} gives a partial answer to a question by Lafont \cite{lafont2008construction}, concerning constructions of classifying spaces for the family of virtually abelian subgroups.


\subsection*{Comments on the proof}
Suppose $G$ acts properly on a $\czero$ space $X$ and suppose $H\le G$ is a semisimple finitely generated virtually abelian subgroup. Let $F\subset X$ be an $H$--invariant flat such that $H\acts F$ is cocompact. Then $\comm{G}{H}$ preserves the parallel set $P_F=F\times F^{\perp}$ and its product structure (Lemma~\ref{lem:split}). This gives rise to two factor actions $\comm{G}{H}\acts F$ and $\comm{G}{H}\acts F^\perp$. Theorem~\ref{introthm:symmetricspace} and Proposition~\ref{introprop:highestvab} come from analyzing the regularity of $\comm{G}{H}\acts F$; and Theorem~\ref{introthm:hadamard} and Theorem~\ref{introthm:improvedcat0vab} come from analyzing the regularity of $\comm{G}{H}\acts F^\perp$.

\subsection*{Organization of the paper}
In Section~\ref{sec:C} we give background on $\conc$. In Section~\ref{sec:honolomy} we collect several preparatory observations for later sections. Sections~\ref{sec:smoothmfds}, \ref{sec:cube}, \ref{sec:buildings}, \ref{sec:bredon} and~\ref{sec:lmgroups} are essentially independent from one another. In Sections~\ref{sec:smoothmfds},~\ref{sec:cube} and~\ref{sec:buildings} we handle the cases of Hadamard manifolds, cube complexes and Euclidean buildings respectively. Section~\ref{sec:bredon} is about applications to Bredon cohomological dimension. In Section~\ref{sec:lmgroups} we discuss the relation between $\conc$ and the examples by Leary and Minasyan. More examples of possible pathological behavior of commensurators are given in Section~\ref{sec:examples}.

\subsection*{Acknowledgments}

J.\ H.\ thanks the Max Planck Institute for Mathematics where part of the work was completed. J.\ H.\ thanks J.\ Lafont, T.\ Nguyen and T.\ T.\ Nguyen-Phan for helpful discussions. 

T.\ P.\ thanks the Fields Institute for Research in Mathematical Sciences where part of the work was completed.
T.\ P.\ was supported by EPSRC First Grant EP/N033787/1.  T.\ P.\ thanks G.\ Margulis and J.\ Schwermer for helpful discussions.

Both authors thank I.\ Leary and A.\ Minasyan for valuable discussions and comments improving the paper.


\section{Background on $\conc$}
\label{sec:C}
We refer to the \hyperref[sec:intro]{Introduction} for definitions of a commensurator and $\conc$. Throughout, we will be using the following simple observation.
\begin{lem}
	\label{lem:comm invariant}
Let $G$ be a group and let $H_1,H_2$ be two finitely generated virtually abelian subgroups which are commensurable. Then $H_1$ satisfies $\conc$ if and only if $H_2$ satisfies $\conc$.
\end{lem}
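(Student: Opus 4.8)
The plan is to reduce the biconditional to a single implication using the fact that commensurable subgroups share the same commensurator, and then, starting from a finite-index subgroup of $H_1$ normalized by a given finitely generated $K\le\comm{G}{H_1}$, to manufacture a finite-index subgroup of $H_2$ that is \emph{also} normalized by $K$. First I would record two standard properties of commensurability (write $A\sim B$ when $A$ and $B$ are commensurable): it is an equivalence relation, and it is preserved by conjugation, since $\conj{g}{(A\cap B)}=\conj{g}{A}\cap\conj{g}{B}$ has finite index in both $\conj{g}{A}$ and $\conj{g}{B}$ whenever $A\cap B$ does in $A$ and $B$. Then for $g\in\comm{G}{H_1}$ one has $\conj{g}{H_2}\sim\conj{g}{H_1}\sim H_1\sim H_2$ by transitivity, so $g\in\comm{G}{H_2}$; the reverse inclusion is symmetric, giving $\comm{G}{H_1}=\comm{G}{H_2}=:N$. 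Consequently both instances of $\conc$ range over the same finitely generated subgroups $K\le N$, and by the symmetry of the hypotheses it suffices to assume $H_1$ satisfies $\conc$ and deduce it for $H_2$.

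So fix a finitely generated $K\le N$. By $\conc$ for $H_1$, the subgroup $K$ normalizes some finite-index $H_1'\le H_1$, and since $H_1'\sim H_1\sim H_2$, the intersection $L=H_1'\cap H_2$ has finite index in both $H_1'$ and $H_2$. I expect the only genuine obstacle here to be \emph{$K$-invariance}: $K$ normalizes $H_1'$ but need not normalize $H_2$, hence need not normalize $L$, and intersecting the $K$-translates of $L$ is useless because $K$ may be infinite. To get around this I would pass to a \emph{characteristic} finite-index subgroup of $H_1'$ that still lies inside $L$. Set $d=[H_1':L]$ and let $\core$ be the intersection of all subgroups of $H_1'$ of index $d$. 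Because $H_1'$ is finitely generated it has only finitely many subgroups of any given finite index, so $\core$ has finite index in $H_1'$; any automorphism of $H_1'$ permutes the index-$d$ subgroups and therefore fixes $\core$, making it characteristic in $H_1'$; and $\core\le L$ because $L$ is itself one of the subgroups being intersected.

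It then remains to verify the two properties that close the argument. Since $K$ normalizes $H_1'$, conjugation by each element of $K$ restricts to an automorphism of $H_1'$, which preserves the characteristic subgroup $\core$; hence $K$ normalizes $\core$. Moreover $\core\le L\le H_2$, with $[L:\core]\le[H_1':\core]<\infty$ and $[H_2:L]<\infty$, so $[H_2:\core]<\infty$. Thus $K$ normalizes the finite-index subgroup $\core$ of $H_2$, which is exactly $\conc$ for $H_2$; applying the same reasoning with the roles of $H_1$ and $H_2$ exchanged yields the converse. The characteristic-subgroup step is the crux, as it converts the $K$-invariance of $H_1'$ into $K$-invariance of a subgroup sitting inside $H_2$ without any appeal to $K$ normalizing $H_2$ itself.
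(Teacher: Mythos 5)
Your proof is correct. The paper itself offers no argument for this lemma at all --- it is labelled a ``simple observation'' and used throughout without proof --- so there is nothing to compare it against line by line; what you have written is a complete justification of exactly the kind the authors presumably had in mind. Both of your key steps are sound: the identity $\comm{G}{H_1}=\comm{G}{H_2}$ for commensurable subgroups (via conjugation-invariance and transitivity of commensurability) reduces the statement to transferring a $K$-normalized finite-index subgroup from $H_1$ to $H_2$, and you correctly identify that the naive move --- intersecting $K$-translates of $L=H_1'\cap H_2$ --- fails for infinite $K$, which is precisely why the characteristic-subgroup trick is needed. Passing to the intersection of all subgroups of $H_1'$ of index $d=[H_1':L]$ uses finite generation exactly where it is required (finitely many subgroups of a given finite index), and the resulting subgroup is characteristic in $H_1'$, hence $K$-invariant, and of finite index in $H_2$. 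As a side remark, since $H_1'$ is virtually abelian one could make the characteristic subgroup more concrete (e.g.\ for abelian $H_1'$ the subgroup $dH_1'$ already lies in every index-$d$ subgroup), but your general argument is cleaner and needs nothing beyond finite generation. Note also the contrast with the paper's Lemma~\ref{lem:ftindex}, which handles a related transfer problem by intersecting finitely many conjugates over coset representatives; that device is unavailable here precisely because $K$ need not contain a finite-index subgroup normalizing $H_2$, so your characteristic-subgroup route is the right one.
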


If $\conc$ holds for all finitely generated virtually abelian subgroups of rank equal to (respectively, at most) $r$ then we denote it by $\mathrm{(C)}_{r}$ (respectively, $\mathrm{(C)}_{\leq r})$. Condition $\mathrm{(C)}_{1}$ essentially boils down to showing that for any infinite order element $h \in G$, whenever \[gh^kg^{-1} =h^l\] for some $g \in G$ and $k,l\neq 0$, then $k= \pm l$. This can be easily shown if one can assign to every such $h$ a `norm' which satisfies $\|h^n\|=|n| \cdot \|h\|$ and is invariant under conjugation. In several classes of non-positively curved groups such norm is given by (different variants of) \emph{translation length}, and so condition $\mathrm{(C)}_{1}$ is satisfied by $\czero$ groups, $\delta$--hyperbolic groups, systolic groups and biautomatic groups. A simple example of a group which does not satisfy $\mathrm{(C)}_{1}$ is the Baumslag-Solitar group  $BS(1,n)$, for $n>1$. \medskip

When passing to higher rank abelian subgroups, the translation length alone is insufficient. In this case elements of $\comm{G}{H}$ may also `rotate' various subgroups of $H$, and in fact this condition is not always satisfied by non-positively curved groups. To the best of our knowledge the only general method of showing $\conc$ is \cite[Corollary~9]{ckrw}, which states that $H$ satisfies $\conc$ if $H$ is weakly separable in $G$. Let us point out that this result does not require $H$ to be virtually abelian. On the other hand, combining this result with the fact that virtually abelian subgroups of linear groups are separable implies $\conc$ for any linear group (see \cite{cat0vab} for a short account of the proof). This applies to, among others, Coxeter groups, graph products of finite groups, or fundamental groups of special cube complexes. One easily finds examples of groups $G$ where $H$ satisfies $\conc$ but $H$ is not weakly separable in $G$.

\section{General $\czero$ case}
\label{sec:honolomy}
\subsection{Finitely generated subgroups of commensurators}
\label{subsec:honolomy}
Let $X$ be a $\czero$ space and let $G$ be a group acting properly on $X$.  
A subgroup $H$ of $G$ is \emph{semisimple} if each element of $H$ acts as a semisimple isometry.

Let $H\le G$ be a semisimple free abelian group of rank $n$ and let $\minset{H}$ denote the minimal set of $H$. 
It is a standard fact that $\minset{H}$ splits as $\mathbb E^n\times Y$ where $Y$ is a $\czero$ space, moreover, $H$ acts freely and cocompactly by translations on the $\mathbb E^n$--factor and acts as the identity on the $Y$--factor. Let $F$ be a flat of form $\mathbb E^n\times\{y\}$ for $y\in Y$. Then $H$ stabilizes $F$. Let $P_F=F\times F^\perp$ be the parallel set of~$F$.

We will need the following theorem.

\begin{thm}\label{thm:combinenormalizerminset}
Let $N_G(H)$ denote the normalizer of $H$ in $G$ and let $Z_G(H)$ denote the centralizer of $H$ in $G$. Then:


\begin{enumerate}[label=(\roman*)]

\item \label{thm:centralizer}  The index $[N_G(H):Z_G(H)]$ is finite. In particular, if  a subgroup $\Gamma \le G$ normalizes $H$ then $\Gamma$ has a finite index subgroup which centralizes $H$.

\item \label{thm:ruane}
The normalizer $N_G(H)$ preserves $\minset{H} \cong \mathbb E^n\times Y$ and its product structure. 
If the action of $G$ on $X$ is in addition cocompact then $Z_G(H)$ acts geometrically on $\minset{H}$ (and thus by \ref{thm:centralizer} the same holds for $N_G(H)$).

\end{enumerate}

\end{thm}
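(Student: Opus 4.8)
The plan is to handle the two parts separately; the only substantial point is the cocompactness assertion in~\ref{thm:ruane}, everything else being a combination of the Flat Torus Theorem with conjugation--invariance of translation length.

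\emph{Part \ref{thm:centralizer}.} I would begin with the conjugation homomorphism $\rho\colon N_G(H)\to\Aut{H}$, whose kernel is exactly $Z_G(H)$, so it suffices to prove that $\mathrm{im}\,\rho$ is finite. Since $H\cong\Z^n$, identify $H$ with the lattice $\Lambda=\{v_h : h\in H\}\subset\R^n$ of translation vectors acting on the $\E^n$--factor of $\minset{H}$; then each $\rho(g)$ extends to a linear automorphism of $\R^n$ preserving $\Lambda$. The key observation is that this linear map is orthogonal: because $H$ acts trivially on $Y$ and by the translation $v_h$ on each flat $\E^n\times\{y\}\subseteq\minset{H}$, the translation length of $h$ on $X$ equals $|v_h|$, and translation length is a conjugation invariant, so $|v_{\conj{g}{h}}|=|v_h|$ for every $h$. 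Hence $\rho(g)\in O(n)$ and stabilizes $\Lambda$, and as the group of orthogonal transformations preserving a lattice is finite, $\mathrm{im}\,\rho$ is finite. The ``in particular'' clause is then immediate: if $\Ga\le G$ normalizes $H$ then $\Ga\le N_G(H)$, so $\Ga\cap Z_G(H)$ has finite index in $\Ga$ and centralizes $H$.

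\emph{Part \ref{thm:ruane}, invariance.} For the first assertion I would use that the min set is conjugation--equivariant, $g\minset{H}=\minset{\conj{g}{H}}$ for $g\in G$; when $g\in N_G(H)$ conjugation merely permutes $H$, whence $g\minset{H}=\minset{H}$. The splitting $\minset{H}\cong\E^n\times Y$ is canonically attached to $H$, since the fibres $\E^n\times\{y\}$ are exactly the $n$--flats of $\minset{H}$ on which $H$ acts as a cocompact lattice of translations, a description independent of the decomposition. An isometry normalizing $H$ carries such a flat $F$ to a flat $gF$ on which $\conj{g}{H}=H$ again acts cocompactly by translations, hence to another fibre; so $g$ permutes the fibres and preserves the product structure. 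This is also recorded in \cite{bh}.

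\emph{Part \ref{thm:ruane}, cocompactness (the main obstacle).} Assume now $G\acts X$ is cocompact, say $GK=X$ with $K$ compact of diameter $C$ and basepoint $x_0$. Properness and isometricity of $Z_G(H)\acts\minset{H}$ are inherited from $G\acts X$, so cocompactness is the whole difficulty. I would argue by contradiction: suppose there are $x_i\in\minset{H}$ with $d(x_i,Z_G(H)x_0)\to\infty$, and pick $g_i\in G$ with $d(g_ix_i,x_0)\le C$. Fix generators $h_1,\dots,h_n$ of $H$. The crucial estimate is a properness bound: since $x_i\in\minset{h_j}$, the point $g_ix_i\in\minset{\conj{g_i}{h_j}}$ realizes the translation length $\ell(h_j)$ of $\conj{g_i}{h_j}$, so $d(x_0,\conj{g_i}{h_j}x_0)\le 2C+\ell(h_j)$; by properness only finitely many elements move $x_0$ that little, and thus each family $\{\conj{g_i}{h_j}\}_i$ is finite. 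Passing to a subsequence on which all $n$ conjugates $\conj{g_i}{h_j}$ are independent of $i$, any two indices give $\inv{g_{i'}}g_i\in Z_G(H)$; fixing $i_0$ and setting $z_i=\inv{g_{i_0}}g_i\in Z_G(H)$ yields $d(z_ix_i,\inv{g_{i_0}}x_0)\le C$, so each $x_i$ lies within distance $C$ of the orbit $Z_G(H)(\inv{g_{i_0}}x_0)$, and hence within $C+d(\inv{g_{i_0}}x_0,x_0)$ of $Z_G(H)x_0$, contradicting the hypothesis. This gives cocompactness of $Z_G(H)\acts\minset{H}$, and the finite index $[N_G(H):Z_G(H)]$ from \ref{thm:centralizer} upgrades the conclusion to $N_G(H)$. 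I expect this pigeonhole-via-properness step to be the delicate part, as it is where the cocompactness of $G\acts X$ is genuinely converted into cocompactness on the invariant subspace $\minset{H}$.
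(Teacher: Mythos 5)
Your proof is correct and follows essentially the same route as the paper: the paper's own ``proof'' consists of citing \cite{bh}*{Theorem II.7.1} for part \ref{thm:centralizer} and the splitting assertion of \ref{thm:ruane}, and of invoking the argument of \cite{ruane2001dynamics}*{Theorem 3.2} for the cocompactness assertion, and your two arguments (the translation-length/orthogonality argument identifying the image of $N_G(H)\to\Aut{H}$ with a group of lattice-preserving orthogonal maps, and the properness--pigeonhole argument producing elements of $Z_G(H)$ from coinciding conjugates of the generators) are precisely the proofs behind those citations. In effect you have supplied self-contained proofs of the quoted facts, with no gap of substance.
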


\begin{proof}

\ref{thm:centralizer} and the first assertion of \ref{thm:ruane} is proven in \cite[Theorem II.7.1]{bh}. In the original statement, the $G$--action is required to be faithful, but this condition is not necessary. The second assertion of \ref{thm:ruane} can be proven in a similar way to \cite[Theorem 3.2]{ruane2001dynamics}.
\end{proof}

We now begin studying the action of $\comm{G}{H}$ on the parallel set $P_F$.

\begin{lem}
	\label{lem:split}
The set $P_F$ is invariant under $\comm{G}{H}$. Moreover, for each element $\alpha\in \comm{G}{H}$, the action of $\alpha$ on $P_F$ splits as a product of an isometry of $F$ and an isometry of $F^\perp$.
\end{lem}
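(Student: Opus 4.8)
The plan is to establish the invariance of $P_F$ first and then upgrade it to a splitting. Since $\alpha$ is an isometry of $X$, it carries the flat $F$ to another flat $\alpha F$ and maps $P_F$ isometrically onto $P_{\alpha F}$. Recalling that parallelism of flats (bounded Hausdorff distance) is an equivalence relation, parallel flats have identical parallel sets; hence the whole invariance statement reduces to the single claim that $\alpha F$ is parallel to $F$. Indeed, once $\alpha F \parallel F$ we get $\alpha P_F = P_{\alpha F} = P_F$, and moreover $\alpha$ sends any flat parallel to $F$ to a flat parallel to $\alpha F$, that is, again to a flat parallel to $F$.

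To prove $\alpha F \parallel F$, I would use commensurability to produce a common finite-index subgroup. Put $K = \alpha H \alpha^{-1} \cap H$; by the definition of $\comm{G}{H}$ this has finite index in both $H$ and $\alpha H \alpha^{-1}$. A finite-index subgroup of a group acting cocompactly by translations on a flat does the same, so $K$ acts cocompactly by translations on $F$ (as a subgroup of $H$) and on $\alpha F$ (as a subgroup of $\alpha H \alpha^{-1}$, transporting the $H$--action on $F$ through $\alpha$). Since $\minset{H} \subseteq \minset{K}$ and $\minset{\alpha H \alpha^{-1}} \subseteq \minset{K}$, both $F$ and $\alpha F$ lie in $\minset{K} \cong \mathbb{E}^n \times Y_K$, where $K$ translates the $\mathbb{E}^n$--factor cocompactly and fixes $Y_K$ pointwise. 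The key step is then to show that any flat $F'$ on which $K$ acts cocompactly by translations has the form $\mathbb{E}^n \times \{y\}$: the projection $\pi_{Y_K}$ is $K$--invariant, hence has bounded image on $F'$ (the image of a cocompact fundamental domain), while it carries geodesic lines of $F'$ to geodesics of $Y_K$; a geodesic line with bounded image is constant, forcing $\pi_{Y_K}|_{F'}$ to be constant. Applying this to $F$ and $\alpha F$ exhibits them as horizontal flats $\mathbb{E}^n \times \{y_1\}$ and $\mathbb{E}^n \times \{y_2\}$, which are parallel.

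For the splitting, recall that the flats parallel to $F$ inside $P_F = F \times F^\perp$ are precisely the fibers $F \times \{z\}$, $z \in F^\perp$. By the invariance just proved, $\alpha$ permutes these fibers, so it has the form $\alpha(f,z) = (a_z(f), \psi(z))$ for some bijection $\psi$ of $F^\perp$ and maps $a_z \colon F \to F$. Comparing distances before and after applying the isometry $\alpha$ and specializing coordinates does the rest: restricting to a single fiber shows each $a_z$ is an isometry of $F$; restricting to a single $F$--coordinate gives the identity $d_{F^\perp}(\psi z_1, \psi z_2)^2 = d_{F^\perp}(z_1,z_2)^2 - d_F\!\big(a_{z_1}(f), a_{z_2}(f)\big)^2$, whence $\psi$ is $1$--Lipschitz, and the same argument applied to $\alpha^{-1}$ shows $\psi^{-1}$ is $1$--Lipschitz, so $\psi$ is an isometry of $F^\perp$. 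Feeding this back into the identity forces $d_F\!\big(a_{z_1}(f), a_{z_2}(f)\big) = 0$ for all $f, z_1, z_2$, that is, $a_z$ is independent of $z$; calling it $\phi$ yields $\alpha(f,z) = (\phi(f), \psi(z))$ with $\phi \in \Isom(F)$ and $\psi \in \Isom(F^\perp)$.

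The main obstacle I anticipate is the parallelism step: turning the purely algebraic commensurability input into the geometric statement that $\alpha F$ and $F$ are parallel. The cleanest route is through the common finite-index subgroup $K$ and the rigidity of its axis flats inside $\minset{K}$. The product-isometry computation in the last paragraph is then essentially bookkeeping, but one must be careful because $F^\perp$ may itself contain Euclidean directions, so a priori an isometry of $P_F$ could mix them with $F$; it is exactly the preservation of the parallel class of $F$ — which is \emph{not} available for a general isometry of $P_F$ — that rules this out.
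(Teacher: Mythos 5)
Your proof is correct and takes essentially the same route as the paper: both reduce the lemma, via transitivity of parallelism, to the single claim that $\alpha$ preserves the parallel class of $F$, and both deduce that claim from the fact that $H\cap \alpha H\alpha^{-1}$ has finite index in both $H$ and $\alpha H \alpha^{-1}$, hence acts cocompactly on the relevant flats. The two steps you work out by hand — that two flats invariant under this common subgroup are parallel (your $\minset{K}$ horizontal-slice argument), and that an isometry of $F\times F^\perp$ permuting the fibers $F\times\{z\}$ splits as a product (your Lipschitz computation) — are handled in the paper by a brief assertion and by citing \cite[Proposition 5.3(3)]{bh}, respectively, so your additional detail is a fuller version of the same argument rather than a different one.
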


\begin{proof}
Let $\alpha \in \comm{G}{H}$. To prove both assertions of the lemma, it is enough to show that for any point $y \in F^\perp$, flats $F\times\{y\} $ and $\alpha(F\times\{y\}) $ are parallel. Indeed, in this case flat $\alpha(F\times\{y\})$ is clearly contained in $P_F$, and thus it is of form $F \times \{y'\}$ for some $y' \in F^\perp$. Then \cite[Proposition 5.3(3)]{bh} implies that $\alpha$ splits as a product.

 Consider an $H$--invariant flat $F \times \{y_0\} \subset P_F$.
The flats $F \times \{y\}$ and $F \times \{y_0\}$ are parallel, and thus so are flats $\alpha(F \times \{y\})$ and $\alpha(F \times \{y_0\})$. Since parallelism is an equivalence relation, to show that $F\times\{y\} $ and $\alpha(F\times\{y\}) $ are parallel it suffices to show that $F\times\{y_0\} $ and $\alpha(F\times\{y_0\}) $ are parallel.
Since $F\times\{y_0\} $ is $H$--invariant we get that $\alpha(F\times\{y_0\}) $ is $\alpha H \inv{\alpha}$--invariant. The intersection $H'=H \cap \alpha H \inv{\alpha}$ is semisimple and is of finite index in both $H$ and $\alpha H \inv{\alpha}$. Note that both $\alpha(F\times\{y_0\})$ and 
 $\alpha(F\times\{y_0\}) $ are $H'$--invariant. Therefore they are parallel.\end{proof}

\begin{defn}
	\label{def:honolomy}
By Lemma~\ref{lem:split}, we have a well-defined homomorphism \[\phi \colon \comm{G}{H} \to \Isom(F)\] by considering the action on the $F$--factor of $P_F$. Note that each element of $\Isom(F)$ acts on the Tits boundary $\partial_T F$ of $F$, which induces a homomorphism $\Isom(F)\to O(n,\mathbb R)$ where $n=\dim F$. Let $\Phi$ be the composition $\comm{G}{H} \to \Isom(F)\to O(n,\mathbb R)$.
\end{defn}

The following observation is also of independent interest.
\begin{lem}
	\label{lem:comm}
	We view $H$ as a subgroup of $\Isom(F)$. Then the image of $\phi$ (see Definition~\ref{def:honolomy}) is contained in the commensurator of $H$ in $\Isom(F)$.
\end{lem}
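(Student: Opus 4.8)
The plan is to exploit the fact that $\phi$ is a genuine group homomorphism (this is exactly what Lemma~\ref{lem:split} buys us), so that it automatically carries the commensurability relation in $G$ to the commensurability relation in $\Isom(F)$. The whole argument then reduces to a single elementary algebraic observation about homomorphisms.

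First I would record that observation in isolation: if $\psi\colon G\to K$ is any homomorphism of groups and $A,B\le G$ are commensurable, then $\psi(A)$ and $\psi(B)$ are commensurable in $K$. Indeed, set $C=A\cap B$, which by hypothesis has finite index in both $A$ and $B$. The index of an image never exceeds the index upstairs, so $\psi(C)$ has finite index in both $\psi(A)$ and $\psi(B)$. Since $\psi(C)\le\psi(A)\cap\psi(B)$, the intersection $\psi(A)\cap\psi(B)$ contains a finite index subgroup of each factor and is therefore of finite index in both. Before applying this, I would clarify the identification in the statement: because $H$ acts on $F\cong\mathbb E^n$ freely by translations, the restriction $\phi|_H$ is injective, so viewing $H$ as a subgroup of $\Isom(F)$ legitimately means identifying $H$ with $\phi(H)$, and ``the commensurator of $H$ in $\Isom(F)$'' means $\comm{\Isom(F)}{\phi(H)}$.

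Finally I would apply the observation with $\psi=\phi$. Given $\alpha\in\comm{G}{H}$, by definition $H$ and $\conj{\alpha}{H}$ are commensurable in $G$. Since $\phi$ is a homomorphism we have $\phi(\conj{\alpha}{H})=\conj{\phi(\alpha)}{\phi(H)}$, and the observation yields that $\phi(H)$ and $\conj{\phi(\alpha)}{\phi(H)}$ are commensurable in $\Isom(F)$. Hence $\phi(\alpha)\in\comm{\Isom(F)}{\phi(H)}$, which is precisely the assertion. I expect no serious obstacle here, since the substantive geometric content (that $\alpha\mapsto\phi(\alpha)$ is well defined and multiplicative) has already been isolated in Lemma~\ref{lem:split}; the only point deserving a moment's care is that one commensurates $H$ via its translation action on $F$, a subtlety handled by the freeness of the $H$--action that makes $\phi|_H$ injective.
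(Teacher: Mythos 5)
Your proof is correct, but it takes a different route from the paper's. You treat the lemma as a purely algebraic consequence of the fact (recorded in Definition~\ref{def:honolomy}, via Lemma~\ref{lem:split}) that $\phi$ is a homomorphism: any homomorphism $\psi$ carries commensurable subgroups to commensurable subgroups, since $\psi(A\cap B)\le \psi(A)\cap\psi(B)$ and indices can only drop under images; applying this to $\phi$ and to the pair $H$, $\conj{\alpha}{H}$ gives the claim, and injectivity of $\phi|_H$ (free translation action on $F$) justifies identifying $H$ with $\phi(H)$. The paper instead argues geometrically: it writes $\phi(g)$ explicitly as the composition $F\overset{g}{\to} gF\overset{p}{\to} F$ with $p$ the parallelism map, picks finite index subgroups $L'\le H$ and $L=gL'g^{-1}\le H\cap gHg^{-1}$, and checks that both $g|_F$ and $p$ are equivariant with respect to these subgroup actions, concluding the exact relation $\phi(g)L'\phi(g)^{-1}=L$ inside $\Isom(F)$. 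The trade-off: your argument is shorter, more general (it proves $\psi(\comm{G}{H})\le \comm{K}{\psi(H)}$ for any homomorphism $\psi$), and pushes all the geometry into the previously established splitting; the paper's equivariance computation makes explicit that conjugation by $\phi(g)$ in $\Isom(F)$ realizes the same isomorphism $L'\to L$ as conjugation by $g$ in $G$, which is the precise form cited later in the proof of Proposition~\ref{prop:conc obstruction}. That stronger statement does also follow from your approach (apply $\phi$ to $gL'g^{-1}=L$), so nothing is lost, but it is worth noting that the paper's proof keeps this refinement on the surface.
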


\begin{proof}
	Let $g\in \comm{G}{H}$. Then $\phi(g)$ is a composition $F\overset{g}{\to} gF\overset{p}{\to} F$ where $p$ is the parallelism map. Since $g\in\comm{G}{H}$, there exists a finite index subgroup $L\le gHg^{-1}$ such that $L\le H$. Let $L'\le H$ be the finite index subgroup such that $gL'g^{-1}=L$ and let $\alpha:L'\to L$ be the isomorphism induced by conjugation by $g$. Then the map $F\overset{g}{\to} gF$ is equivariant with respect to $L'\acts F$, $L\acts gF$ and $\alpha:L'\to L$. The map $p:gF\to F$ is equivariant with respect to the action of $L$ on both $gF$ and $F$. Thus $\phi(g)$ is equivariant with respect to $L'\acts F$, $L\acts F$ and $\alpha:L'\to L$. Thus $\phi(g)L'(\phi(g))^{-1}=L$ when viewed as subgroups of $\Isom(F)$. Since $L'$ and $L$ are finite index subgroups of $H$, the lemma follows.
\end{proof}

\begin{prop}
	\label{prop:normalize}
Suppose $G$ acts properly on a $\czero$ space $X$. Let $H\le G$ be a semisimple free abelian subgroup of finite rank. Let $K\le \comm{G}{H}$ be a finitely generated subgroup. Then $K$ normalizes a finite index subgroup of $H$ if and only if $\Phi(K)$ is finite.
\end{prop}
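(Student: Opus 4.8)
The plan is to read off everything from the homomorphism $\phi\colon\comm{G}{H}\to\Isom(F)$ and its linear part $\Phi$ of Definition~\ref{def:honolomy}, together with the product splitting of Lemma~\ref{lem:split}. The single fact that drives the whole argument is that $\phi$ \emph{restricted to} $H$ is injective: since $H$ acts freely by translations on $F\cong\mathbb E^n$, an element $h\in H$ with $\phi(h)=\mathrm{id}$ is trivial. Write $\Lambda=\phi(H)$ for the resulting full-rank translation lattice of $F$. For the forward implication, suppose $K$ normalizes a finite index subgroup $H'\le H$. Then $H'$ is again semisimple free abelian of rank $n$, so Theorem~\ref{thm:combinenormalizerminset}\ref{thm:centralizer} applied to $H'$ produces a finite index subgroup $K_0\le K$ that centralizes $H'$. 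For $k\in K_0$ the isometry $\phi(k)$ then commutes with every translation in the full-rank lattice $\phi(H')$, which forces the linear part of $\phi(k)$ to be trivial, i.e.\ $\Phi(k)=\mathrm{id}$. Hence $\Phi(K_0)$ is trivial and $\Phi(K)$ is finite.

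The converse is the substantial direction, and its engine is the following observation, which I expect to be the main obstacle: if $k\in\comm{G}{H}$ satisfies $\Phi(k)=\mathrm{id}$, then $k$ centralizes a finite index subgroup of $H$, indeed $H\cap\conj{k}{H}=Z_H(k)$. To see this, take $h'\in H\cap\conj{k}{H}$ and write $h'=\conj{k}{h''}$ with $h''\in H$; then $\phi(h')=\Phi(k)\phi(h'')=\phi(h'')$, and injectivity of $\phi|_H$ forces $h'=h''$, so $k$ commutes with $h'$. Since $k\in\comm{G}{H}$, the subgroup $H\cap\conj{k}{H}$ has finite index in $H$, hence so does $Z_H(k)$. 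This step is delicate precisely because $\phi$ may have a large kernel — isometries acting trivially on $F$ can move $F^\perp$ arbitrarily — so a priori $\conj{k}{H}$ could differ from $H$ only inside $\ker\phi$ while still having the correct $F$--factor; it is the combination of the commensuration hypothesis with injectivity of $\phi|_H$ that rules this out and upgrades ``centralizes modulo $\ker\phi$'' to genuine centralization.

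Granting this observation, I would finish as follows. Assume $\Phi(K)$ is finite and put $K_1=\ker(\Phi|_K)$, a finite index normal subgroup of $K$, which is again finitely generated. By the observation each generator of $K_1$ centralizes a finite index subgroup of $H$, so intersecting finitely many of these we obtain a single finite index subgroup $H_1\le H$ centralized by all of $K_1$. I would then set $H^\star=\bigcap_{k\in K}\conjinv{k}{H}$, which is automatically normalized by $K$, and verify that it has finite index in $H$. Choosing coset representatives $g_1,\dots,g_s$ of $K/K_1$ (with $g_1=e$) and using that $g_j$ normalizes $K_1$, the group $K_1$ centralizes each $g_j^{-1}H_1g_j$; consequently $g_j^{-1}H_1g_j\subseteq\conjinv{k}{H}$ for every $k\in g_jK_1$, and therefore $H^\star\supseteq\bigcap_{j}g_j^{-1}H_1g_j$. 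This last intersection is a finite intersection of subgroups commensurable with $H$, hence commensurable with $H$, and it lies in $H^\star\le H$; thus it has finite index in $H$, whence $H^\star$ has finite index in $H$ and is normalized by $K$, as required.
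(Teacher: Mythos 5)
Your proof is correct and follows essentially the same route as the paper's: the forward direction uses Theorem~\ref{thm:combinenormalizerminset}.\ref{thm:centralizer} plus the fact that an isometry commuting with a full-rank lattice of translations is itself a translation, and the converse rests on the identical key observation that $\Phi(k)=\mathrm{id}$ forces $k$ to centralize $H\cap kHk^{-1}$ (translations of $F$ commute, combined with injectivity of $\phi|_H$). Your closing step with $H^\star=\bigcap_{k\in K}k^{-1}Hk$ and coset representatives of $K_1$ in $K$ is an inlined proof of the paper's Lemma~\ref{lem:ftindex}, so the two arguments agree essentially step for step.
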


\begin{proof}

First suppose that $K$ normalizes a finite index subgroup $H'$ of $H$. Then by Theorem~\ref{thm:combinenormalizerminset}.\ref{thm:centralizer} there exists a finite index subgroup $K'\le K$ which centralizes $H'$. We will show that $\Phi(K')= \{e\}$. This clearly implies that $\Phi(K)$ is finite.

Let $g\in K'$ be arbitrary. Since $g$ centralizes $H'$, proceeding as in the proof of Lemma~\ref{lem:comm}, we get that $\phi(g)$ is an $H'$--equivariant isometry of $F$. Since $\phi(g)$ commutes with linearly independent translations whose axes span $F$ (consider generators of $H'$), it is not hard to see that it has to be a translation itself. Thus its restriction to the boundary $\partial_TF$ is trivial. Since $g$ was arbitrary, we get that $\Phi(K') = \{e\}$.\medskip

Now suppose the image $\Phi(K)$ is finite. Thus there is a finite index subgroup $K' \le K$ for which $\Phi(K')=\{e\}$. This means that for any $g \in K'$ the isometry $\phi(g) \in \Isom(F)$ is a translation. Thus for any $g\in K'$ and $h\in H$, we have 
\begin{equation}\label{eq:translationscommute}\phi(h)=\phi(g)\phi(h)(\phi(g))^{-1}=\phi(ghg^{-1}),
\end{equation} where the first equality follows from the fact that by definition $\phi(h)$ is also a translation, hence it commutes with $\phi(g)$.

Since $K' \le \comm{G}{H}$, for any $g \in K'$ the intersection $H_g = H \cap gH\inv{g}$ is a finite index subgroup of $H$. Take an element $h\in H_g$. Thus we have $h\in H$ and also $h=gh'\inv{g}$ for some $h' \in H$. By \eqref{eq:translationscommute} we get $\phi(h')=\phi(gh'\inv{g})=\phi(h)$. Since $h,h'\in H$ and $\phi|_H$ is an embedding we obtain $h'=h$. Therefore $g$ centralizes $H_g$.

Let $g_1, \ldots, g_n$ be a set of generators of $K'$.
Every $g_i$ centralizes $H_{g_i}$ and thus $K$ centralizes the intersection $H' =\bigcap_{g_i}H_{g_i}$, which is a finite index subgroup of $H$. Now the following elementary lemma completes the proof.
\end{proof}

\begin{lem}
	\label{lem:ftindex}
	Let $H\le K' \le K \le \comm{G}{H}$ and suppose that $H$ is central in $K'$ and that index $[K \colon K']$ is finite. Then there exists a finite index subgroup $H' \le H$ which is normal in $K$. 
\end{lem}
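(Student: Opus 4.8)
The plan is to produce the required subgroup as the intersection of all $K$--conjugates of $H$, after first observing that the centrality of $H$ in $K'$ forces this a priori infinite intersection to be a \emph{finite} one.

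First I would fix a transversal $g_1 = e, g_2, \ldots, g_m$ for the left cosets of $K'$ in $K$, where $m = [K \colon K'] < \infty$. The crucial point is that since $H$ is central in $K'$, every $k' \in K'$ commutes with each $h \in H$, so conjugation by $k'$ fixes $H$ pointwise and hence fixes every subgroup of $H$. Writing an arbitrary $k \in K$ as $k = g_i k'$ with $k' \in K'$, we therefore obtain $kH\inv{k} = g_i\bigl(k' H \inv{(k')}\bigr)\inv{g_i} = g_i H \inv{g_i}$. Thus the conjugate $kH\inv{k}$ depends only on the coset $g_iK'$, and the family $\{\,kH\inv{k} : k \in K\,\}$ consists of at most $m$ distinct subgroups, namely $g_1 H \inv{g_1}, \ldots, g_m H \inv{g_m}$.

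Next I would set $H' = \bigcap_{k \in K} kH\inv{k}$, which by the previous step equals the finite intersection $\bigcap_{i=1}^m g_i H \inv{g_i}$. Being the intersection of all $K$--conjugates of $H$, the subgroup $H'$ is automatically normal in $K$: conjugation by any $k_0 \in K$ merely permutes the conjugates $kH\inv{k}$ and so leaves their intersection invariant. Moreover $H' \le H$ (take $g_1 = e$ in the intersection), and since each $g_i \in K \le \comm{G}{H}$, the subgroup $g_i H \inv{g_i}$ is commensurable with $H$, so $g_i H \inv{g_i} \cap H$ has finite index in $H$; intersecting finitely many such subgroups keeps the index finite, whence $[H \colon H'] < \infty$. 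This $H'$ then has all the required properties.

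The only genuine content lies in the first step: recognizing that the hypothesis of $H$ being central in $K'$ collapses the potentially infinite family of conjugates $\{kH\inv{k}\}_{k \in K}$ down to the finite family indexed by $K/K'$. Once this finiteness is secured, both the normality of $H'$ in $K$ and the finiteness of its index in $H$ are purely formal. Accordingly I expect no serious obstacle, and the result is indeed the elementary lemma it is advertised to be.
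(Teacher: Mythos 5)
Your proof is correct and follows essentially the same route as the paper's: both take left coset representatives of $K'$ in $K$, define $H'$ as the intersection of the corresponding conjugates of $H$, deduce finite index from $K \le \comm{G}{H}$, and use centrality of $H$ in $K'$ to see that conjugation by $K$ only permutes this finite family of conjugates. The only difference is presentational — you spell out the normality verification that the paper dismisses as "straightforward to check."
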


\begin{proof}

Let $e=s_1, s_2, \ldots, s_n \in K$  be representatives of left cosets of $K'$ in $K$.
Define \[H'= \bigcap_{i}s_iH\inv{s_i}.\]
Since $K \le \comm{G}{H}$, clearly $H'$ has finite index in $H$. It is straightforward to check that $H'$ is normal in $K$.
\end{proof}

\begin{remark}\label{rem:bounddiam}
	If $F^{\perp}$ has bounded diameter, then $\comm{G}{H}$ acts on  $F\times F^\perp$ cocompactly, as $H\le \comm{G}{H}$ and $H$ acts cocompactly on $F\times \{c_0\}$ and thus on $F\times F^\perp$. It follows that $\comm{G}{H}$ contains $H$ as a finite index subgroup. Then $\comm{G}{H}$ is clearly finitely generated, and it normalizes a finite index subgroup of $H$. 
\end{remark}


\subsection{The highest virtual abelian subgroups}
A virtually abelian subgroup $H \le G$ is called \emph{highest} if it is not virtually contained in a virtually abelian subgroup $H' \leq G$ of higher rank. In this section we show that the commensurator of a highest virtually abelian subgroup always behaves nicely. We need the following well-known lemma. We give a proof for the sake of completeness.

\begin{lem}\label{lem:highest}Let $G$ be a group acting geometrically on a $\czero$ space $X$ and suppose $H$ is a highest abelian subgroup of $G$. Then the normalizer $N_G(H)$ contains $H$ as a finite index subgroup.
\end{lem}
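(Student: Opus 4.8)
The plan is to reduce to the centralizer, embed $H$ in a product via the Flat Torus Theorem, and then show that the second factor must be bounded---for otherwise one manufactures an abelian subgroup of rank $\rk{H}+1$ containing $H$, contradicting that $H$ is highest. First, since $G$ acts geometrically, every element of $G$ is a semisimple isometry, and by the Solvable Subgroup Theorem \cite{bh} the abelian group $H$ is finitely generated and virtually $\Z^n$; passing to a characteristic finite-index free abelian subgroup and noting that all relevant indices remain finite, I may assume $H\cong\Z^n$. By Theorem~\ref{thm:combinenormalizerminset}\ref{thm:centralizer} the index $[N_G(H):Z_G(H)]$ is finite, so it suffices to prove $[Z_G(H):H]<\infty$. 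By Theorem~\ref{thm:combinenormalizerminset}\ref{thm:ruane}, $Z_G(H)$ acts geometrically on $\minset{H}\cong\E^n\times Y$ preserving the product structure, where $H$ is central in $Z_G(H)$, acts on the $\E^n$--factor as a full-rank lattice of translations, and acts trivially on $Y$.

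Next I analyze the two factor actions. Each $g\in Z_G(H)$ has an $\E^n$--component that commutes with the full-rank translation lattice $H$, hence that component is itself a translation; thus the factor map $Z_G(H)\to\Isom(\E^n)$ takes values in the translation subgroup $\R^n$. Let $K_0\le Z_G(H)$ be the kernel of the factor action $Z_G(H)\to\Isom(Y)$. Then $K_0$ contains $H$, and being trivial on $Y$ it acts properly on $\E^n$ by translations, so it is virtually $\Z^n$ and contains $H$ with finite index. Writing $Q=Z_G(H)/K_0\hookrightarrow\Isom(Y)$, which acts cocompactly on $Y$, the identity $[Z_G(H):H]=|Q|\cdot[K_0:H]$ shows that $[Z_G(H):H]$ is finite if and only if $Q$ is finite.

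The key step is to upgrade the cocompact $Q$--action on $Y$ to a geometric one. Properness is the point: given a bounded set $B\subseteq Y$, any $q\in Q$ with $qB\cap B\neq\emptyset$ lifts to some $\tilde q\in Z_G(H)$, and after multiplying $\tilde q$ by a suitable element of $K_0$ (which leaves the $Y$--action unchanged) I can assume its $\E^n$--translation vector lies in a fixed bounded fundamental domain for the full-rank lattice given by the image of $K_0$ in $\R^n$. These normalized lifts all move a fixed product ball $B_{\E^n}\times B$ so as to meet itself, so properness of the geometric action $Z_G(H)\acts\E^n\times Y$ leaves only finitely many of them, hence only finitely many such $q$. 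This index-absorption argument, together with the hyperbolic-element input below, is where I expect the real difficulty to lie.

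Finally I conclude by a dichotomy on $Q$. If $Q$ is finite, then $[Z_G(H):H]<\infty$ and we are done. If $Q$ is infinite, it acts geometrically on the $\czero$ space $Y$, which is then unbounded, so by the standard fact that an infinite group acting geometrically on a $\czero$ space contains an element acting as a hyperbolic isometry \cite{bh}, I may choose $\bar g\in Q$ of infinite order with positive translation length on $Y$ and lift it to $g\in Z_G(H)$. Since $g$ centralizes $H$, the group $\langle H,g\rangle$ is abelian; moreover $g^m\notin H$ for every $m\neq 0$, because $g^m$ has nonzero translation length on $Y$ whereas $H$ acts trivially there. Hence $\langle H,g\rangle\cong\Z^{n+1}$ is an abelian subgroup of rank $\rk{H}+1$ containing $H$, contradicting the assumption that $H$ is highest. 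Therefore $Q$ is finite and $[N_G(H):H]<\infty$.
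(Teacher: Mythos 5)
Your proof is correct and follows essentially the same route as the paper's: split $\minset{H}\cong\E^n\times Y$ via Theorem~\ref{thm:combinenormalizerminset}, show that the quotient of the normalizer (in your variant, of the centralizer) acts geometrically on $Y$, extract an infinite-order element when that quotient is infinite, and contradict highestness by producing an abelian subgroup of rank $\rk{H}+1$ containing $H$; your extra work (the explicit properness of the $Q$--action via normalized lifts) simply fills in the standard fact the paper cites from \cite[Section~II.7]{bh}. One correction: the ``standard fact'' that an infinite group acting geometrically on a $\czero$ space contains an infinite-order (hence hyperbolic) element is not in \cite{bh} --- it is Swenson's theorem \cite[Theorem~11]{Swenson}, which is exactly what the paper invokes at this step.
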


\begin{proof}Since $G$ acts geometrically on $X$, by Theorem~\ref{thm:combinenormalizerminset}.\ref{thm:ruane} the normalizer $N_G(H)$ acts geometrically on the minimal set  $\minset{H} \cong \mathbb E^n\times Y$. Since the action preserves the splitting and $H$ acts on $Y$ trivially, it follows that $N_G(H)/H$ acts geometrically on the $\czero$ space $Y$ (see \cite[Section~II.7]{bh}). Now suppose $[N_G(H) \colon H]$ is infinite.  In this case $N_G(H)/H$ is an infinite $\czero$ group and therefore it contains an element of infinite order $g$ \cite[Theorem~11]{Swenson}. Let $\tilde{g} \in N_G(H)$ be any preimage of $g$. Since by Theorem~\ref{thm:combinenormalizerminset}.\ref{thm:centralizer} the index $[N_G(H) \colon Z_G(H) ]$ is finite we get that some power $\tilde{g}^n$ commutes with $H$, thus contradicting the fact that $H$ is highest.
\end{proof}

\begin{prop}\label{prop:highestvab}
Let $G$ be a group acting geometrically on a $\czero$ space and suppose $H$ is a highest virtually abelian subgroup of $G$. Then $\comm{G}{H}$ contains $H$ as a finite index subgroup. In particular $\comm{G}{H}$ is finitely generated, and it normalizes a finite index subgroup of $H$.
\end{prop}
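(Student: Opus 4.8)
The plan is to reduce the statement about the commensurator to the statement about the normalizer, which has already been handled in Lemma~\ref{lem:highest}. First I would observe that the key point is to show $\comm{G}{H}$ contains $H$ as a finite index subgroup; the final two conclusions (finite generation and normalizing a finite index subgroup of $H$) then follow immediately, since a finite index overgroup of a finitely generated virtually abelian group is itself finitely generated, and $H$ is automatically normal (indeed central up to finite index) in any group containing it as a finite index subgroup after passing to a characteristic finite index piece.

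The natural strategy is to pass from the virtually abelian case to the abelian case. Since $H$ is virtually abelian, it contains a finite index free abelian subgroup $A$; because $H$ is highest and $A$ is commensurable with $H$, the subgroup $A$ is itself a highest abelian subgroup, and by Lemma~\ref{lem:comm invariant} it suffices to prove the statement for $A$ in place of $H$ (commensurable subgroups have commensurable commensurators, so $[\comm{G}{A}:A]<\infty$ will transfer back to give $[\comm{G}{H}:H]<\infty$). I would therefore assume from now on that $H$ is free abelian and highest. Since $G$ acts geometrically, every element of $G$ is a semisimple isometry, so $H$ is semisimple and the machinery of Section~\ref{sec:honolomy} applies: we obtain the flat $F$ on which $H$ acts cocompactly by translations, the parallel set $P_F = F\times F^\perp$, and the homomorphisms $\phi\colon \comm{G}{H}\to\Isom(F)$ and $\Phi\colon\comm{G}{H}\to O(n,\mathbb R)$.

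The main step is to analyze the two factor actions on $F$ and $F^\perp$. For the $F^\perp$ factor, I would argue that highestness forces $F^\perp$ to have bounded diameter: if $F^\perp$ contained geodesic rays, one could build a flat of dimension strictly larger than $n$ inside $P_F$ stabilized (up to finite index) by a group properly containing a finite index subgroup of $H$, contradicting that $H$ is highest. By Remark~\ref{rem:bounddiam}, once $F^\perp$ has bounded diameter, $\comm{G}{H}$ acts cocompactly on $P_F$ and contains $H$ as a finite index subgroup, which is exactly what we want. Alternatively, and more in the spirit of the Comments on the proof, I would control the $F$ factor: take any finitely generated $K\le\comm{G}{H}$, apply Proposition~\ref{prop:normalize}, and show $\Phi(K)$ is finite so that $K$ normalizes a finite index subgroup of $H$; then Lemma~\ref{lem:highest} (giving $[N_G(H'):H']<\infty$ for the highest subgroup $H'$) forces $K$ to lie within a finite index extension of $H$, and a compactness/finite-generation argument upgrades this to all of $\comm{G}{H}$.

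The hard part will be establishing the bounded-diameter claim for $F^\perp$ rigorously, i.e.\ genuinely extracting a higher-dimensional flat (and a subgroup witnessing it) from unboundedness of $F^\perp$ rather than merely from the existence of a ray; one must ensure the candidate higher-rank abelian subgroup actually exists in $G$ and virtually contains $H$, which is where the geometric (cocompact) action and the structure of $\minset{H}$ must be used carefully. I expect this to be the crux, since the purely group-theoretic reductions and the invocation of Remark~\ref{rem:bounddiam} are routine once the geometry of the parallel set is pinned down.
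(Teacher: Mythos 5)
Your reduction to the free abelian case and your derivation of the final two conclusions from $[\comm{G}{H}:H]<\infty$ are both fine, but each of the two routes you propose for the main step breaks down. The first route (forcing $F^\perp$ to have bounded diameter and invoking Remark~\ref{rem:bounddiam}) cannot work at all: highestness is an algebraic condition on subgroups of $G$ and does not bound $F^\perp$. The paper says this explicitly---immediately after Proposition~\ref{prop:highestvab} it remarks that the proposition is \emph{not} a consequence of Remark~\ref{rem:bounddiam}, citing Proposition~\ref{prop:highest}: by Rattaggi--Robertson there is a highest $\mathbb{Z}$--subgroup $H$ of a group acting geometrically on a product of two trees whose axis $F$ is contained in a higher-dimensional flat, so $F^\perp$ is unbounded. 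The failure is exactly where you anticipated trouble: unboundedness of $F^\perp$ produces a bigger flat in $X$, but it produces no higher-rank abelian \emph{subgroup of $G$} virtually containing $H$, so no contradiction with highestness can be extracted.

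Your second route is closer to the paper's proof but rests on an unproven key claim: ``show $\Phi(K)$ is finite'' for an arbitrary finitely generated $K\le\comm{G}{H}$. No argument is offered, and none is available directly in the general $\czero$ setting (finiteness of $\Phi(K)$ fails without highestness, e.g.\ for the Leary--Minasyan groups); the paper's proof is structured precisely to avoid proving this up front. Instead it works with $\ker(\Phi)$, where finiteness of the image is automatic: by Proposition~\ref{prop:normalize} every finitely generated $L\le\ker(\Phi)$ normalizes a finite index subgroup $H_i\le H$, so one may write $\ker(\Phi)=\bigcup_i L_i$ with $H\le L_i\le N_G(H_i)$; by Lemma~\ref{lem:highest} each $N_G(H_i)$ is a finite extension of $H_i$, hence each $L_i$ is virtually abelian of rank $\rk{H}$, and the Ascending Chain Theorem \cite[Theorem~II.7.5]{bh} forces the union to stabilize, giving $[\ker(\Phi):H]<\infty$. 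One then chooses a finite index \emph{characteristic} subgroup $H'$ of $\ker(\Phi)$ with $H'\le H$; since $\ker(\Phi)$ is normal in $\comm{G}{H}$, the subgroup $H'$ is normal in $\comm{G}{H}$, and applying Lemma~\ref{lem:highest} to the highest subgroup $H'$ yields $[\comm{G}{H}:H']\le[N_G(H'):H']<\infty$. This normality-of-the-kernel plus characteristic-subgroup mechanism is the idea your vague ``compactness/finite-generation argument'' is missing, so as written the proposal has a genuine gap at its core.
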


\begin{proof}Without loss of generality we can assume that $H$ is abelian. 	Let $\Phi \colon \comm{G}{H} \to O(n, \mathbb{R})$ be the map given in Definition~\ref{def:honolomy}. By Proposition~\ref{prop:normalize} any finitely generated subgroup $L \le \mathrm{ker}(\Phi)$ normalizes some finite index subgroup of $H$. Therefore $\mathrm{ker}(\Phi)$ can be written as an ascending union $\bigcup_i L_i$ such that $L_i$ normalizes some finite index subgroup $H_i \le H$. We can assume that every $L_i$ contains $H$ (replace $L$ with $\langle L, H\rangle)$. 
 Since any $H_i$ is highest, it follows from Lemma~\ref{lem:highest} that $N_G(H_i)$ is a finite extension of $H_i$, and thus $L_i$ is a virtually abelian group of rank $\rk{H}$ since $H_i \le H \le L_i \le N_G(H_i)$. We obtain that $\mathrm{ker}(\Phi)$ is an ascending union of finitely generated virtually abelian groups of rank $\rk{H}$. By the Ascending Chain Theorem \cite[Theorem~II.7.5]{bh} this ascending union stabilizes after finitely many terms. Thus $\mathrm{ker}(\Phi)$ is a finitely generated virtually abelian group or rank $\rk{H}$.
 
Since the index $[\mathrm{ker}(\Phi) \colon H]$ is finite, we can find a finite index characteristic subgroup $H'$ of $ \mathrm{ker}(\Phi)$ with $H'\le H$. Now since $H'$ is characteristic in $\mathrm{ker}(\Phi)$ and $\mathrm{ker}(\Phi)$ is normal in $\comm{G}{H}$, it follows that $H'$ is normal in $\comm{G}{H}$. By Lemma~\ref{lem:highest} the index $[\comm{G}{H} \colon H']$ is finite and thus $[\comm{G}{H} \colon H]$ is finite as well.  
\end{proof}

We remark that Proposition~\ref{prop:highestvab} is not a consequence of Remark~\ref{rem:bounddiam}, see Proposition~\ref{prop:highest}.

\begin{cor}\label{cor:2dimconc} Let $G$ be a group that either
\begin{enumerate}
 \item acts properly by semisimple isometries on a $2$--dimensional $\czero$ space, or
\item acts  geometrically on a $\czero$ space and contains no subgroup isomorphic to $\mathbb{Z}^n$ for $n>2$.
\end{enumerate}
Then $\conc$ holds for $G$.
\end{cor}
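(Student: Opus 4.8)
The plan is to verify $\conc$ directly for each finitely generated virtually abelian subgroup $H\le G$, reducing everything to an analysis by rank. By Lemma~\ref{lem:comm invariant} I may replace $H$ by a finite index free abelian subgroup, so I will assume $H\cong\mathbb Z^n$. In both cases $H$ is semisimple: in case~(1) this is part of the hypothesis, while in case~(2) the action is geometric, hence cocompact, so every element of $G$ is a semisimple isometry and in particular $H$ is semisimple \cite{bh}. This places us in the setting of Section~\ref{subsec:honolomy}: writing $\minset{H}\cong\mathbb E^n\times Y$ and taking an $H$--invariant flat $F\cong\mathbb E^n$ on which $H$ acts cocompactly by translations, I obtain the homomorphism $\Phi\colon\comm{G}{H}\to O(n,\mathbb R)$ of Definition~\ref{def:honolomy}. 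The first step is to bound the rank: in case~(1) the isometric copy of $\mathbb E^n=F$ sits inside the $2$--dimensional space $X$, forcing $n\le 2$; in case~(2) the absence of a subgroup $\mathbb Z^3\le G$ means $G$ has no virtually abelian subgroup of rank $\ge 3$, so again $n\le 2$.

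Next I would dispose of the low rank cases $n\le 1$ uniformly. Here the target $O(n,\mathbb R)$ of $\Phi$ is finite (trivial when $n=0$, of order two when $n=1$), so $\Phi(K)$ is automatically finite for every finitely generated $K\le\comm{G}{H}$. Proposition~\ref{prop:normalize} then immediately gives that $K$ normalizes a finite index subgroup of $H$, which is exactly $\conc$ for such $H$. Note that this step is common to both cases and requires no geometric input beyond semisimplicity.

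It remains to treat $n=2$, and this is where the two hypotheses enter in genuinely different ways. In case~(1) I claim the orthogonal factor $F^\perp$ of the parallel set $P_F=F\times F^\perp$ is a single point: since $F^\perp$ is convex, two distinct points in it would span a nondegenerate geodesic segment $I$, and then $F\times I\cong\mathbb E^2\times I$ would embed isometrically into $P_F\subseteq X$; this subspace has topological dimension $3$, contradicting $\dim X=2$ by monotonicity of covering dimension. Hence $F^\perp$ has diameter zero, and Remark~\ref{rem:bounddiam} applies to show that $\comm{G}{H}$ contains $H$ as a finite index subgroup and normalizes a finite index subgroup of $H$. In case~(2), since $G$ has no $\mathbb Z^3$, the rank--$2$ subgroup $H$ is not virtually contained in any virtually abelian subgroup of strictly higher rank, i.e.\ $H$ is highest; Proposition~\ref{prop:highestvab}, whose hypothesis of a geometric action is precisely case~(2), then yields the same conclusion. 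In all cases every finitely generated subgroup of $\comm{G}{H}$ normalizes a finite index subgroup of $H$, so $\conc$ holds for $G$.

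The only substantive work lies in the rank reduction together with the two rank--$2$ inputs, and I expect the rank--$2$ step to be the main obstacle: one must recognize that in the $2$--dimensional setting the transverse factor $F^\perp$ collapses to a point, and, separately, that maximality of the rank forces $H$ to be highest so that the geometric structure of $\comm{G}{H}\acts F$ is controlled. By contrast the rank--$1$ case is essentially automatic once one observes that $O(1,\mathbb R)$ is finite, reflecting the general principle that the ``rotational'' obstruction to $\conc$ coming from $\Phi$ cannot appear below rank $2$.
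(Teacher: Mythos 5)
Your proof is correct and follows essentially the same route as the paper: reduce to rank at most $2$, dispose of rank $\le 1$ by general $\czero$ facts, and in rank $2$ use Remark~\ref{rem:bounddiam} for case~(1) and the observation that $H$ is highest together with Proposition~\ref{prop:highestvab} for case~(2). The only (harmless) variations are that you handle rank $\le 1$ via $\Phi$ and Proposition~\ref{prop:normalize} rather than citing the translation-length argument of Section~\ref{sec:C}, and that you make explicit the dimension argument showing $F^\perp$ is a point in case~(1), which the paper leaves implicit when invoking Remark~\ref{rem:bounddiam}.
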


\begin{proof}Since $G$ does not contain free abelian subgroups of rank higher than $2$, $\conc$ is equivalent to $\conch{\le 2}$. Since $G$ acts properly by semisimple isometries on a $\czero$ space, $\conch{\le 1}$ holds for $G$ (see Section~\ref{sec:C}). Let $H$ be a rank $2$ free abelian subgroup of $G$. In the first case $\conc$ for $H$ is satisfied by Remark~\ref{rem:bounddiam}. In the second case one observes that $H$ is the highest abelian subgroup and thus $\conc$ for $H$ follows from Proposition~\ref{prop:highestvab}.
\end{proof}

\subsection{Core of \texorpdfstring{$F^\perp$}{$F$-`perp'}}
\label{subsec:core}
Let $G,H$ and $P_F=F \times F^\perp$ be as in Section~\ref{subsec:honolomy}. In this section we look at the action $\comm{G}{H}$ on $P_F$ more closely. By Lemma~\ref{lem:split}, there is a factor action $\rho:\comm{G}{H}\acts F^\perp$. For each $x\in F^\perp$, let $\Stab_\rho(x)$ be the stabilizer of $x$ with respect to the action $\rho$.

\begin{defn}
We define the \emph{core} of $F^\perp$, denoted $\core$, to be the subset of $F^\perp$ made of points whose stabilizer (with respect to $\rho$) is commensurable to $H$.
\end{defn}

\begin{lem}
\label{lem:core}

The core $\core  \subset F^{\perp}$ is non-empty, convex and $\comm{G}{H}$--invariant.
\end{lem}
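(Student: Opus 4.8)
The plan is to give an explicit description of $\core$ as a directed union of fixed-point sets of finite-index subgroups of $H$, from which all three properties follow readily. For $x\in F^\perp$ write $\Stab_\rho(x)=\{\alpha\in\comm{G}{H}:\rho(\alpha)x=x\}$. The starting observation is that every such stabilizer preserves the flat $F\times\{x\}\subset P_F$: by Lemma~\ref{lem:split} an element $\alpha$ acts on $P_F=F\times F^\perp$ as $(\phi(\alpha),\rho(\alpha))$, so if $\rho(\alpha)x=x$ then $\alpha(F\times\{x\})=\phi(\alpha)(F)\times\{x\}=F\times\{x\}$. Consequently $\Stab_\rho(x)$ acts on $F\times\{x\}\cong\mathbb E^n$ as a restriction of the proper action $G\acts X$, hence properly.

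First I would establish the reformulation
\[
\core=\bigcup_{H'}\mathrm{Fix}_\rho(H'),
\]
the union being taken over all finite-index subgroups $H'\le H$, where $\mathrm{Fix}_\rho(H')=\{x\in F^\perp:\rho(h)x=x\text{ for all }h\in H'\}$. The inclusion $\subseteq$ is immediate: if $\Stab_\rho(x)$ is commensurable to $H$, then $H'=H\cap\Stab_\rho(x)$ has finite index in $H$ and fixes $x$ by construction. For $\supseteq$, suppose $H'$ fixes $x$ with $[H:H']<\infty$. Then $H'\le\Stab_\rho(x)$, and since $H$ acts cocompactly on $F$ by translations, $H'$ acts cocompactly on $F\times\{x\}\cong\mathbb E^n$ via translations of the $F$--factor. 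As $\Stab_\rho(x)$ acts properly on the same flat and contains the cocompact subgroup $H'$, a standard pigeonhole argument gives $[\Stab_\rho(x):H']<\infty$. Hence $H\cap\Stab_\rho(x)\supseteq H'$ has finite index in both $H$ and $\Stab_\rho(x)$, so $\Stab_\rho(x)$ is commensurable to $H$ and $x\in\core$.

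With this description in hand the three assertions are routine. For non-emptiness, the $Y$--factor of $\minset{H}=\mathbb E^n\times Y$ sits inside $F^\perp$ and $H$ fixes it pointwise, so $\varnothing\ne Y\subseteq\mathrm{Fix}_\rho(H)\subseteq\core$. For convexity, each $\mathrm{Fix}_\rho(H')$ is convex, being the fixed-point set of a group of isometries of the $\czero$ space $F^\perp$; moreover the family $\{\mathrm{Fix}_\rho(H')\}$ is directed under inclusion, since for finite-index $H_1,H_2\le H$ the intersection $H_1\cap H_2$ is again finite index and $\mathrm{Fix}_\rho(H_1)\cup\mathrm{Fix}_\rho(H_2)\subseteq\mathrm{Fix}_\rho(H_1\cap H_2)$. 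A directed union of convex subsets is convex, whence $\core$ is convex. Finally, for $\comm{G}{H}$--invariance I would use that $\Stab_\rho(\rho(\alpha)x)=\alpha\,\Stab_\rho(x)\,\alpha^{-1}$ for every $\alpha\in\comm{G}{H}$; since conjugation preserves commensurability and $\alpha H\alpha^{-1}$ is commensurable to $H$, transitivity of commensurability yields $\rho(\alpha)x\in\core$ whenever $x\in\core$.

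The main obstacle is the nontrivial inclusion $\supseteq$ in the reformulation, namely verifying that fixing merely a finite-index subgroup of $H$ already forces the entire stabilizer to be commensurable to $H$. The content there is the interplay between the properness of $\Stab_\rho(x)\acts F\times\{x\}$, inherited from $G\acts X$, and the cocompactness of the $H'$--action on this flat; everything afterward is formal manipulation of commensurability together with the convexity of fixed-point sets in $\czero$ spaces.
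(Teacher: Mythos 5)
Your proof is correct and is essentially the paper's argument in different packaging: the decisive step in both is that $\Stab_\rho(x)$ acts properly on the flat $F\times\{x\}$ while a subgroup commensurable to $H$ acts cocompactly on it, forcing finite index, together with the fact that isometries fixing two points of a $\czero$ space fix the geodesic between them. The paper proves convexity directly (the intersection $\Stab_\rho(c_1)\cap\Stab_\rho(c_2)$ fixes the geodesic $\overline{c_1c_2}$ pointwise and is cocompact on each flat $F\times\{c_0\}$), which is precisely what your directed union of fixed-point sets $\mathrm{Fix}_\rho(H')$ unwinds to.
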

\begin{proof}
Clearly $\core$ is non-empty. Note that $\Stab_\rho(gx)=g\Stab_\rho(x)g^{-1}$ for any $g\in \comm{G}{H}$, thus $\core$ is $\comm{G}{H}$--invariant. To see that $\core$ is convex, choose $c_1,c_2\in C$ and let $c_0\in \core$ be a point in the geodesic segment $\overline{c_1c_2}$. For $0\le i\le 2$, let $H_i=\Stab_\rho(c_i)$, and let $H'_0=H_1\cap H_2$. Clearly $H'_0\le H_0$ and $H'_0$ is commensurable to $H$. Since $H'_0$ acts cocompactly on $F\times\{c_0\}$ and $H_0$ acts properly on $F\times\{c_0\}$, $H'_0$ is of finite index in $H_0$. Thus $H_0$ is commensurable to $H$ and so $c_0\in \core$.
\end{proof}

Note that in general $\core$ is not complete or closed in $F^{\perp}$.

\begin{lem}\label{lem:discreteorb}Assume that $\core$ is a proper metric space. Then the action $\comm{G}{H}\acts \core$ has discrete orbits.
\end{lem}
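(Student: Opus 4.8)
The plan is to derive a contradiction from the properness of the $\comm{G}{H}$--action on the \emph{whole} parallel set $P_F$, exploiting the fact that a point $x\in\core$ has stabilizer $H_x:=\Stab_\rho(x)$ commensurable to $H$, and hence acting cocompactly on the flat $F\times\{x\}$. The factor action $\rho$ on $F^\perp$ by itself is \emph{not} proper---on $\core$ its point stabilizers are infinite---so the whole idea is to trade the non-properness of $\rho$ against the cocompactness of $H_x$ on the complementary factor $F$. To set this up, note first that since $\comm{G}{H}\le G$ acts properly on $X$ and $P_F$ is $\comm{G}{H}$--invariant (Lemma~\ref{lem:split}), the restricted action $\comm{G}{H}\acts P_F=F\times F^\perp$ is proper and splits as a product. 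Fix $x\in\core$ and set $H_x=\Stab_\rho(x)$; by definition of the core $H_x$ is commensurable to $H$, and since $H_x\cap H$ acts on the Euclidean factor $F$ as a finite-index subgroup of the cocompact translation lattice $H$, the group $H_x$ acts cocompactly on $F$.

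Suppose the orbit $\comm{G}{H}\cdot x$ is not discrete in $\core$. By homogeneity of orbits, $x$ itself fails to be isolated, so there are $\alpha_i\in\comm{G}{H}$ with $\rho(\alpha_i)x\neq x$ and $\rho(\alpha_i)x\to x$. Choose a base point $p=(f_0,x)\in F\times\{x\}$. Using cocompactness of $H_x$ on $F$, I would pick $\beta_i\in H_x$ so that the $F$--coordinate of $\beta_i\alpha_i\cdot p$ lies in a fixed compact set $K_F\subset F$. Crucially, since $\beta_i$ fixes $x$, the isometry $\rho(\beta_i)$ preserves distances to $x$, so the $F^\perp$--coordinate satisfies $d(\rho(\beta_i)\rho(\alpha_i)x,\,x)=d(\rho(\alpha_i)x,\,x)\to 0$. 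Hence $\rho(\gamma_i)x\to x$ where $\gamma_i:=\beta_i\alpha_i$, and these points lie in the $\comm{G}{H}$--invariant set $\core$ (Lemma~\ref{lem:core}). Because $\core$ is assumed proper, this bounded sequence is contained in a compact subset $L\subset F^\perp$, so $\gamma_i\cdot p\in K_F\times L$, a compact subset of $P_F$.

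Properness of $\comm{G}{H}\acts P_F$ then forces the set $\{\gamma_i\}$ to be finite, so after passing to a subsequence $\gamma_i=\gamma$ is constant. Writing $\alpha_i=\beta_i^{-1}\gamma$ and again using that each $\beta_i$ fixes $x$, I obtain $d(\rho(\alpha_i)x,\,x)=d(\rho(\beta_i)^{-1}\rho(\gamma)x,\,x)=d(\rho(\gamma)x,\,x)$, a quantity independent of $i$. Since the left-hand side tends to $0$, this forces $\rho(\gamma)x=x$, and therefore $\rho(\alpha_i)x=\rho(\beta_i)^{-1}x=x$ for every $i$, contradicting $\rho(\alpha_i)x\neq x$. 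This contradiction shows every orbit is discrete.

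I expect the main obstacle to be the bookkeeping between the two factors: one must arrange the correcting elements $\beta_i$---chosen to tame the unbounded $F$--direction---to lie \emph{inside} $H_x$, so that they fix $x$ and hence leave the convergence in the $F^\perp$--direction undisturbed. This is precisely the step that converts properness of the product action into discreteness of the $\rho$--orbit on $\core$, and it is where the defining property of the core (that the stabilizer acts cocompactly on $F$) together with properness of $\core$ is essential.
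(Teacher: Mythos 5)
Your proof is correct and follows essentially the same strategy as the paper's: use the defining property of the core to correct the unbounded $F$--direction by a stabilizer that acts cocompactly on the flat factor, land the corrected orbit of the basepoint in a compact subset of $F\times \core$ (using properness of $\core$ and $\comm{G}{H}$--invariance), and contradict properness of the $\comm{G}{H}$--action on $P_F$. The only difference is bookkeeping: the paper corrects with elements of the stabilizers of the target flats $F\times\{g_iy_0\}$ (obtaining a uniform cocompactness constant by conjugation, so the corrected points have distinct $F^\perp$--coordinates and properness is violated outright), whereas you correct with the basepoint stabilizer $H_x$ acting on the abstract factor $F$, which costs you the extra passage to a constant subsequence $\gamma_i=\gamma$ and the short algebraic argument at the end.
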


\begin{proof}

Suppose the contrary, that there exists $y_0 \in \core$ such that for any $\epsilon>0$ there are infinitely many distinct elements $(g_i)_{i\in \mathbb{N}}$ such that $d(g_iy_0, y_0) < \epsilon$. 
 Embed $\core \hookrightarrow F \times \core \subset P_F$ by $y \mapsto (x_0, y)$ for some chosen $x_0 \in F$ and consider the action of $\comm{G}{H}$ on $F \times \core$. We will suppress from writing $\phi$ and $\rho$ and simply write $g(x,y)=(gx,gy)$ for this action.
 We need the following claim.\medskip
 
\textbf{Claim.} There exists a constant $R>0$ such that for any $g_i \in \comm{G}{H}$, given any two points $(a, g_iy_0) ,(b, g_iy_0) \in F \times \{g_iy_0\}$ there exists $h_i \in \Stab( F \times \{g_iy_0\})$ such that \[d(h_i(a, g_iy_0),(b, g_iy_0))\leq R.\]
 
 \medskip
 
 To see the claim, first observe that since $y_0 \in \core$, the stabilizer $\Stab( F \times \{y_0\})$ acts cocompactly on $F \times \{y_0\}$ and therefore such constant $R$ exists for $ F \times \{y_0\}$. Since for any $g_i$ we have $\Stab( F \times \{g_iy_0\})=g_iStab( F \times \{y_0\})g_i^{-1}$ and the $\Stab( F \times \{g_iy_0\})$--action on $F \times \{g_iy_0\}$ is conjugate to the $\Stab( F \times \{y_0\})$ on $F \times \{y_0\}$ it follows that constant $R$ works for $F \times \{g_iy_0\}$ as well.\medskip
 
 We proceed with the proof of the lemma. By the assumption we have infinitely many distinct points $(g_ix_0, g_iy_0) \in F \times B(y_0, \epsilon)$.
 By the claim there exist elements $h_i \in  \Stab( F \times \{g_iy_0\})$ such that 
 \[d(h_i(g_ix_0, g_iy_0) , (x_0, g_iy_0)) \le R.\] Therefore infinitely many points $(h_i(g_ix_0, g_iy_0))_{i \in \mathbb{N}}$ are contained in the compact subset $B(x_0,R) \times B(y_0, \epsilon) \subset F\times \core$. These points are distinct because their second coordinates are distinct. Consequently, all the elements $(h_ig_i)_{i \in \mathbb{N}}$ are distinct, which contradicts the properness of the action $\comm{G}{H}$ on $ F \times C$.\end{proof}

\section{The smooth manifolds case}\label{sec:smoothmfds}
In this section we show that commensurators of abelian subgroups are well-behaved for groups acting on Hadamard manifolds.

Let $M$ be a Riemannian manifold without boundary and let $C\subset M$ be a \emph{totally convex} subset, i.e.,\ for any pair of points $x,y\in C$ and any Riemannian geodesic $\omega$ connecting $x$ and $y$, we have $\omega\subset C$. 

Let $k$ be the largest integer such that the collection $\{N_\alpha\}$ of smoothly embedded $k$--manifolds of $M$ which are contained in $C$ is non-empty. Let $N=\cup_\alpha N_\alpha$. The following result is well-known (see e.g.\ \cite[pp. 139 - 141]{cheeger2008comparison}).

\begin{lem}
\label{lem:convex subset} The subset $N$ is a totally geodesic, connected, smoothly embedded submanifold of $M$ such that $N\subset C \subset \bar N$, where $\bar N$ is the closure of $N$ in $M$.
\end{lem}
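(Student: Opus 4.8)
The plan is to work locally through the exponential map and then glue the pieces together. Fix $p\in C$ and choose $\epsilon>0$ so small that $\exp_p$ restricts to a diffeomorphism from the ball $B_\epsilon(0)\subset T_pM$ onto a geodesically convex neighborhood $U$ of $p$. Since geodesics through $p$ correspond under $\exp_p^{-1}$ to radial segments, total convexity of $C$ immediately gives that $\tilde C:=\exp_p^{-1}(C\cap U)$ is star-shaped with respect to $0$. I then define the tangent set $V_p:=\{v\in T_pM : \exp_p(tv)\in C \text{ for all small } |t|\}$, i.e.\ the directions in which $C$ extends to both sides of $p$; it is clear that $V_p$ is symmetric and invariant under scaling.

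First I would prove that $V_p$ is a linear subspace of $T_pM$; this is where total convexity does the real work. Given $v,w\in V_p$, the points $\exp_p(\pm tv)$ and $\exp_p(\pm tw)$ lie in $C$ for small $t$, so by total convexity the geodesics joining them lie in $C$, and in particular their midpoints $m_t\in C$. A second-order expansion of the exponential map gives $\exp_p^{-1}(m_t)=\tfrac{t}{2}(v+w)+O(t^2)$, so the geodesics from $p$ to $m_t$ lie in $C$ and have initial directions converging to $(v+w)/2$. Passing to the limit (after reducing to the relative interior so that the relevant cone of $C$-directions is closed) yields $(v+w)/2\in V_p$, and together with scaling and symmetry this forces $V_p$ to be a linear subspace. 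Its dimension is at most $k$, since $\exp_p(V_p\cap B_\epsilon)$ is a smoothly embedded submanifold contained in $C$.

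The point $p$ lies in some $N_\alpha$ precisely when $\dim V_p=k$, and in that case I claim the local normal form $C\cap U=\exp_p(V_p\cap B_\epsilon)$ holds and the right-hand side is totally geodesic. Indeed, any $k$-dimensional $N_\alpha$ through $p$ satisfies $T_pN_\alpha\subseteq V_p$ (approximate tangent directions by geodesics to nearby points of $N_\alpha\subset C$), so $T_pN_\alpha=V_p$ by maximality of $k$; hence every $N_\alpha$ agrees near $p$ with the fixed submanifold $\exp_p(V_p\cap B_\epsilon)$. This submanifold is totally geodesic: it is locally a union of geodesics in the directions of the subspace $V_p$, and were it to fail to be totally geodesic at some point, one could use a geodesic tangent to it but escaping it—which still lies in $C$—to build a smoothly embedded $(k+1)$-manifold inside $C$, contradicting the maximality of $k$.

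It remains to assemble the local pieces. Since near each of its points $N$ coincides with the embedded totally geodesic slice $\exp_p(V_p\cap B_\epsilon)$ and the subspaces $V_p=T_pN$ vary smoothly, $N$ is a smoothly embedded totally geodesic $k$-dimensional submanifold of $M$. It is connected because any two of its points are joined by a geodesic lying in $C$ whose interior consists of top-dimensional (interior) points, hence lies in $N$. The inclusion $N\subset C$ is immediate, and $C\subset\bar N$ follows by the same geodesic argument: given $q\in C$ and any fixed $p\in N$, the geodesic from $p$ to $q$ has its interior in $N$, so $q\in\bar N$. The main obstacle is the first step—establishing that the tangent cone $V_p$ is genuinely a linear subspace and that $C$ has the stated exponential normal form near its interior points; this is exactly where total convexity (as opposed to mere convexity along geodesics through a single point) is indispensable, and the limiting argument must be handled with care because $C$ need not be closed.
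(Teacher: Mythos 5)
First, note that the paper offers no proof of this lemma: it is stated as well known, with a citation to Cheeger--Ebin, pp.~139--141 (the Cheeger--Gromoll structure theory of totally convex sets), so your argument has to stand on its own. It does not, because its load-bearing steps rely on limits of directions of geodesic segments that degenerate. In your proof that $V_p$ is a subspace, what you actually produce are geodesic segments from $p$ to $m_t$ of length $O(t)$ whose initial directions are $O(t)$-close to $(v+w)/2$; each such direction lies in the cone of directions pointing into $C$, but to conclude that the \emph{limit} direction does, you need that cone of directions to be closed, and this fails in general precisely because $C$ need not be closed (already in $\mathbb{E}^2$, for $C$ an open half-plane together with one of its boundary points, the cone of directions at that point is an open half-circle). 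Your parenthetical fix, ``reducing to the relative interior,'' is circular: the relative interior of $C$ is exactly the object $N$ that the lemma is constructing. The same unjustified closedness underlies the claim $T_pN_\alpha\subseteq V_p$ (geodesics from $p$ to nearby points of $N_\alpha$ shrink to a point as their directions converge). Two further steps assume what is to be proven: that a geodesic tangent to $\exp_p(V_p\cap B_\epsilon)$ at a point \emph{other than} $p$ ``still lies in $C$'' (that is precisely the normal form/total geodesy statement at that other point, which you do not yet have), and that the interior of a geodesic joining a point of $N$ to a point of $C$ consists of points of $N$ (this is the content of $C\subseteq\bar N$, not an ingredient available for proving it).

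The missing idea, which replaces all of these limits in the classical proof, is a transversal cone/immersion argument exploiting the maximality of $k$: if $q\in C$, $x\in N_\alpha$, and some geodesic $\gamma$ from $q$ to $x$ (which lies in $C$ by total convexity) is \emph{not} tangent to $N_\alpha$ at $x$, then the map $(t,y)\mapsto\gamma_{qy}(t)$, defined for $y$ in a small neighborhood of $x$ in $N_\alpha$ and $t$ near $1$, has differential of rank $k+1$ at $(1,x)$, so its image contains a smoothly embedded $(k+1)$-manifold inside $C$, contradicting the maximality of $k$. (If $x$ is conjugate to $q$ along $\gamma$, run the cone from a point $\gamma(t_0)\in C$ close to $x$ instead.) This tangency rigidity gives at once $C\cap U\subseteq\exp_p\bigl(T_pN_\alpha\cap B_\epsilon\bigr)$ inside a normal ball $U$ at $p\in N_\alpha$; the reverse inclusion on a smaller ball follows because $N_\alpha\cap U$ is then an open subset of this $k$-disk; and the resulting local normal form is what delivers smoothness, total geodesy, the agreement of all the $N_\alpha$'s, connectedness, and $C\subseteq\bar N$ (for the last point, use that for all but finitely many $t$ the slice $y\mapsto\gamma_{qy}(t)$ of the cone over a disk in $N$ is an embedded $k$-manifold contained in $C$, hence lies in $N$, so points of $N$ accumulate at $q$). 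Either supply this argument or simply cite the reference the paper uses.
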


If $C$ is a point, then $N=C$ is also a point.

\begin{thm}\label{thm:hadamard}
Suppose $G$ acts properly on a Hadamard manifold $X$ by isometries. Let $H\le G$ be a semisimple finitely generated virtually abelian subgroup. Then $\comm{G}{H}$ is equal to the normalizer of a finite index subgroup of $H$. In particular, if the action $G\acts X$ is geometric then $\comm{G}{H}$ is finitely generated.
\end{thm}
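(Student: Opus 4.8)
The plan is to reduce the general virtually abelian semisimple case to the structure established in Section~\ref{subsec:honolomy} and then exploit the smoothness of $X$ to upgrade the conclusion of Proposition~\ref{prop:normalize} from ``normalizes a finite index subgroup'' (for finitely generated subgroups) to ``$\comm{G}{H}$ equals a normalizer''. First I would pass to a finite index free abelian subgroup $H_0 \le H$ of rank $n$; by Lemma~\ref{lem:comm invariant} it suffices to analyze $\comm{G}{H_0}$, and by semisimplicity $H_0$ has a minimal set splitting $\minset{H_0}\cong \mathbb E^n\times Y$. Fix a flat $F=\mathbb E^n\times\{y\}$ with parallel set $P_F=F\times F^\perp$. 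By Lemma~\ref{lem:split} the commensurator preserves $P_F$ and its product structure, giving the factor maps $\phi\colon \comm{G}{H_0}\to \Isom(F)$ and $\rho\colon \comm{G}{H_0}\to \Isom(F^\perp)$, together with $\Phi\colon \comm{G}{H_0}\to O(n,\mathbb R)$ of Definition~\ref{def:honolomy}.

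The key geometric input is that in a Hadamard manifold the parallel set $P_F$ is itself a smoothly embedded totally geodesic submanifold, so $F^\perp$ is a Hadamard manifold on which $\comm{G}{H_0}$ acts via $\rho$. The crucial point is to understand the action $\rho$ on the core $\core\subset F^\perp$ from Section~\ref{subsec:core}. Since $X$ is a \emph{manifold}, the action of $\comm{G}{H_0}$ on $P_F$ is proper, hence so is $\rho$ once restricted appropriately; the core $\core$ is a convex $\comm{G}{H_0}$--invariant subset (Lemma~\ref{lem:core}), and by Lemma~\ref{lem:convex subset} its ``manifold part'' $N\subset \core$ is a totally geodesic smoothly embedded submanifold. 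I would argue that because stabilizers of core points are commensurable to $H_0$ and $X$ is a manifold (so proper discontinuity forces stabilizers to be discrete of bounded size near a point), the orbits of $\comm{G}{H_0}$ on $\core$ are discrete (Lemma~\ref{lem:discreteorb} applies once one checks $\core$, or rather $N$, is a proper metric space). The manifold structure is exactly what rules out the cube-complex and building pathologies: in a smooth totally geodesic submanifold $N$, an isometry fixing a point and acting trivially to first order must be trivial, which forces $\Phi(\comm{G}{H_0})$ to be a discrete, and in fact finite, subgroup of $O(n,\mathbb R)$.

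Granting that $\Phi$ has finite image, Proposition~\ref{prop:normalize} (applied not just to finitely generated $K$ but to the whole group once finiteness of $\Phi(\comm{G}{H_0})$ is known) yields that $\comm{G}{H_0}$ normalizes a finite index subgroup $H'\le H_0$; intersecting over the finitely many cosets as in Lemma~\ref{lem:ftindex} one may take $H'$ normal in all of $\comm{G}{H_0}$. Conversely $N_G(H')\le \comm{G}{H_0}=\comm{G}{H}$ always, so $\comm{G}{H}=N_G(H')$ for this finite index $H'\le H$, proving the main assertion. Finally, when $G\acts X$ is geometric, $N_G(H')$ is finitely generated by Theorem~\ref{thm:combinenormalizerminset}, giving the last sentence.

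The main obstacle I anticipate is establishing that $\Phi(\comm{G}{H_0})$ is finite, equivalently that the holonomy around the core is finite. The subtlety is that the full commensurator need not be finitely generated, so one cannot directly invoke Proposition~\ref{prop:normalize}; instead one must show discreteness of $\Phi(\comm{G}{H_0})$ \emph{as a group} using the properness of the manifold action on a genuine (locally compact, finite-dimensional) object rather than on the possibly non-complete core. The heart of the argument will be to produce a $\comm{G}{H_0}$--invariant, locally compact piece of $\core$ (its regular part $N$, or a suitable $\Phi$--equivariant compactification of the $F$--directions) on which properness of the ambient $G$--action forces the orthogonal holonomy to lie in a discrete, hence finite, subgroup of $O(n,\mathbb R)$.
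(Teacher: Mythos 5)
Your reduction hinges on the claim that, once $\Phi(\comm{G}{H_0})$ is known to be finite, Proposition~\ref{prop:normalize} can be ``applied to the whole group'' to conclude that $\comm{G}{H_0}$ normalizes a finite index subgroup of $H_0$. This step is false, not merely unproved. Finite generation in Proposition~\ref{prop:normalize} is not a technical convenience: the proof centralizes the intersection $\bigcap_i H_{g_i}$ over a \emph{finite} generating set, and an infinite intersection of finite index subgroups need not have finite index. The paper's own Proposition~\ref{prop:anti torus} (Wise's anti-torus) is a direct counterexample to your implication: there $H\cong\Z$, the group acts geometrically on a product of two trees, and $\Phi(\comm{G}{H})$ has order at most two --- yet $\comm{G}{H}$ normalizes no finite index subgroup of $H$ and is not even finitely generated. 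So even if you succeeded in proving $\Phi(\comm{G}{H_0})$ finite, your conclusion would not follow; finiteness of the holonomy on the $F$--factor is simply not a strong enough invariant for a possibly infinitely generated commensurator.

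There is a second problem: your argument for finiteness of $\Phi$ conflates the two factors of $P_F$. The rigidity statement you invoke (an isometry of a smooth $N$ fixing a point and acting trivially to first order is the identity) concerns the action $\rho$ on $N\subset\core\subset F^\perp$, and gives no control on $\Phi$, which records rotations of the flat $F$. (Also, $P_F$ need not be a smooth submanifold and $F^\perp$ need not be a manifold --- think of a flat strip $\mathbb{R}\times[0,a]$ in the universal cover of a surface --- which is exactly why the paper passes to the smooth part $N$ of $\core$ via Lemma~\ref{lem:convex subset}.) The paper's proof never touches $\Phi$ at all: it studies the homomorphism $h\colon\comm{G}{H}\to\Isom(N)$ given by the $F^\perp$--side action, and uses the smooth structure to show that $\ker(h)$ is \emph{commensurable with $H$}: for $x\in N$ the stabilizer $\Stab_\rho(x)$ maps to the orthogonal group of $T_xN$ with finite index kernel (nearby core points in frame directions are fixed by finite index subgroups), and an isometry of connected $N$ with a fixed point and trivial derivative there is the identity, so this kernel lies in $\ker(h)\le\Stab_\rho(x)$. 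Since $\ker(h)$ is automatically normal in $\comm{G}{H}$, a finite index characteristic subgroup $H''\le\ker(h)\cap H$ of $\ker(h)$ is normal in the whole commensurator, giving $\comm{G}{H}=N_G(H'')$; this is precisely how the finite generation obstruction you correctly worried about is circumvented. If you reroute your argument to identify a \emph{normal} subgroup of $\comm{G}{H_0}$ commensurable with $H_0$, rather than to bound the holonomy image in $O(n,\mathbb R)$, the smooth ingredients you listed are the right ones.
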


Recall that isometries of Riemannian manifolds as metric spaces are actually diffeomorphisms and preserve the Riemannian tensor.

\begin{proof}Without loss of generality we can assume that $H$ is free abelian. Let $F$ be a flat in the minimal set of $H$ where $H$ acts cocompactly. Let $P_F = F\times F^\perp$ be the parallel set of $F$. As in Section~\ref{subsec:core}, let $\rho:\comm{G}{H}\acts F^{\perp}$ be the factor action and let $\core \subset F^\perp$ be the core. 
	
Choose a basepoint $y\in F$. Then $F^\perp$ (respectively $\core$) can be realized as a convex subset $\{y\}\times F^\perp$ (respectively $\{y\}\times \core$) of $X$. Let $p:P_F\to \{y\}\times F^\perp$ be the projection map. Let $N=\cup_\alpha N_\alpha \subset \core$ be as in Lemma~\ref{lem:convex subset}. Since $N$ is totally geodesic, $N$ is convex in $\core$. For any $g\in \comm{G}{H}$, the composition $N_\alpha\to (p\circ g) (N_\alpha)$ is a diffeomorphism. Thus $\rho:\comm{G}{H}\acts F^{\perp}$ leaves $N$ invariant and acts on $N$ by Riemannian isometries.
	
Choose $x\in N$. Since $N \subset \core$, the stabilizer $\Stab_\rho(x)$ is commensurable to $H$. Let $T_x N$ be the tangent space of $N$ at $x$. Let $H'$ be the kernel of the natural homomorphism from $\Stab_\rho(x)$ to orthogonal group of $T_x N$. 
\begin{claim}
The subgroup $H'$ is of finite index in $\Stab_\rho(x)$.\end{claim}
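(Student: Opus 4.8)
The plan is to show that the image $Q$ of the derivative homomorphism $\psi\colon \Stab_\rho(x)\to O(T_x N)$ is finite; this is equivalent to the claim since $H'=\ker\psi$. First I would record two structural facts. Because $x\in N\subset\core$, by definition of the core the group $\Stab_\rho(x)$ is commensurable to $H$, and hence it is finitely generated and virtually abelian; consequently its quotient $Q$ is finitely generated and virtually abelian as well. The key elementary reduction is then that a finitely generated virtually abelian torsion group is finite (a finite index free abelian subgroup must be trivial), so it suffices to prove that every element of $Q$ has finite order, i.e.\ that for every $g\in\Stab_\rho(x)$ the orthogonal transformation $dg_x\in O(T_x N)$ has finite order.

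Suppose not, and fix $g\in\Stab_\rho(x)$ with $dg_x$ of infinite order. Since $dg_x$ lies in the compact group $O(T_x N)$, its powers accumulate, so I can choose a nonzero vector $\xi\in T_x N$ of small norm whose orbit $\{(dg_x)^k\xi\}_{k}$ is infinite. As $g$ acts on the totally geodesic submanifold $N$ as a Riemannian isometry fixing $x$, naturality of the exponential map gives $g^k x'=\exp_x\bigl((dg_x)^k\xi\bigr)$, where $x'=\exp_x(\xi)\in N$. For $\xi$ within the injectivity radius these are infinitely many distinct points of $N\subset\core$, all contained in the metric ball of radius $|\xi|$ about $x$; in particular the orbit $\{g^k x'\}$ accumulates in a bounded region.

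The main obstacle is that the factor action $\rho$ on $F^\perp$ is \emph{not} proper (the stabilizers of core points are commensurable to $H$, hence infinite), so finiteness of $Q$ cannot be read off directly from the accumulation above; properness has to be restored using the cocompact $F$--direction, exactly as in the proof of Lemma~\ref{lem:discreteorb}. I would embed $N\hookrightarrow F\times N\subset P_F$ via $z\mapsto(x_0,z)$ for a fixed $x_0\in F$. For each $k$ the point $g^k x'$ lies in $\core$, so $\Stab_\rho(g^k x')=g^k\Stab_\rho(x')g^{-k}$ acts cocompactly on the fibre $F\times\{g^kx'\}$; conjugating by the isometry $g^k$ shows this can be arranged with a single constant $R$ independent of $k$. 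Hence there are elements $h_k\in\Stab_\rho(g^kx')$ with
\[
d\bigl(h_kg^k(x_0,x'),\,(x_0,g^kx')\bigr)\le R .
\]
Since $\rho(h_k)$ fixes $g^kx'$, the point $h_kg^k(x_0,x')$ has $F^\perp$--coordinate exactly $g^kx'$, so these points are pairwise distinct and all lie in the bounded set $B(x_0,R)\times B(x,|\xi|)$, whose closure is compact because $X$ is proper. Thus the infinitely many distinct elements $h_kg^k\in G$ move the fixed point $(x_0,x')$ into a fixed compact region, contradicting properness of $G\acts X$. Therefore every $dg_x$ has finite order, so $Q$ is a finitely generated virtually abelian torsion group and hence finite, giving $[\Stab_\rho(x):H']<\infty$.
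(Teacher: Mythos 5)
Your proof is correct, but it takes a genuinely different route from the paper's. The paper's argument is a two-line soft argument using only the defining property of the core: pick an orthonormal frame $e_1,\ldots,e_n$ of $T_xN$, realize each $e_i$ as the initial vector of a geodesic from $x$ to some $x_i\in N$, and note that since $x_i\in N\subset\core$ the stabilizer $\Stab_\rho(x_i)$ is commensurable to $\Stab_\rho(x)$; hence a finite index subgroup $H_i\le\Stab_\rho(x)$ fixes $x_i$, and so fixes $e_i$. The intersection $\bigcap_i H_i$ then fixes the whole frame, i.e.\ lies in $H'=\ker\psi$, which gives the claim with no dynamics at all. You instead reduce to showing the image $Q=\psi(\Stab_\rho(x))$ is a torsion group (using that a finitely generated virtually abelian torsion group is finite) and rule out an infinite-order derivative by a properness argument: you push the bounded orbit $\{g^kx'\}=\{\exp_x((dg_x)^k\xi)\}$ back into a compact subset of $X$ using the cocompact $F$--direction, exactly in the spirit of the proof of Lemma~\ref{lem:discreteorb}. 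Your argument is sound --- in particular you correctly notice that $\rho$ itself is not proper and that Lemma~\ref{lem:discreteorb} cannot be quoted verbatim (it assumes $\core$ is a proper metric space, which the paper explicitly warns may fail), so you rerun its mechanism but land the compactness argument in the proper space $X$ rather than in $\core$; that is a nice refinement. The trade-off: your route needs properness of the action $G\acts X$, properness of $X$ (Hopf--Rinow), and cocompactness of $H\acts F$, whereas the paper's argument needs none of these --- only the pairwise commensurability of stabilizers of core points --- and is considerably shorter. On the other hand, your argument establishes discreteness-type information about the action near $x$ that the paper's frame argument does not exhibit.
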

To see the claim, take an orthogonal frame ${e_1,e_2,\ldots, e_n}$ in $T_x N$ (it is possible that $n=0$), and find ${x_1,x_2,\ldots,x_n}$ in $N$ such that the geodesic segment from $x$ to $x_i$ has tangent vector $e_i$ at $x$ (such $x_i$ exists since $N$ is a manifold without boundary). Since $x_i \in N \subset \core$, the subgroup $\Stab_\rho(x_i)$ is commensurable to $\Stab_\rho(x)$. Thus a finite index subgroup $H_i\le\Stab_\rho(x)$ stabilizes $x_i$, hence also stabilizes $e_i$. It follows that $\cap_{i=1}^nH_i$ stabilizes all of $\{e_1,e_2,\ldots,e_n\}$ and thus the claim follows.\medskip

Let $h:\comm{G}{H}\to\Isom(N)$ be the homomorphism induced by $\rho$. Since $N$ is a smooth manifold, $H'$ acts trivially on $N$. Thus $H'\le \ker(h)$. On the other hand, $\ker(h) \le \Stab_\rho(x)$. Since $H'$ is of finite index in $\Stab_\rho(x)$ and $\Stab_\rho(x)$ is commensurable to $H$, we obtain that $\ker(h)$ is commensurable to $H$. Let $H''$ be a finite index characteristic subgroup of $\ker(h)$ such that $H''\le \ker(h)\cap H$. Then $H''$ is of finite index in $H$ and $H''$ is normalized by $\comm{G}{H}$. Since clearly we have $N_G(H'') \le \comm{G}{H}$, we conclude that $\comm{G}{H} = N_G(H'')$.

If the action $G\acts X$ is geometric then by Theorem~\ref{thm:combinenormalizerminset}.\ref{thm:ruane} the normalizer $N_G(H'')$ acts geometrically on $\minset{H''}$ and thus it is finitely generated.
\end{proof}

\section{The cube complex case}
\label{sec:cube}
\subsection{General actions}
We refer to the excellent notes by Sageev \cite{sageevnotes} for background on $\czero$ cube complexes and hyperplanes. Let $X$ be a finite dimensional $\czero$ cube complex and let $F\subset X$ be a flat.

A hyperplane $h$ \emph{crosses} $F$ if $F$ is not contained in a halfspace bounded by $h$. Note that if $h$ crosses $F$, then $h\cap F$ is a codimension 1 flat in $F$. Given $F$, let $\h(F)$ be the collection of hyperplanes that cross $F$. 

\begin{lem}
	\label{lem:intersection1}
Let $F_1$ and $F_2$	be two parallel flats. Then $\h(F_1)=\h(F_2)$. Moreover, for any $h\in \h(F_1)$, $h\cap F_1$ and $h\cap F_2$ are parallel.
\end{lem}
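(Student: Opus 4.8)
The plan is to use the flat strip spanned by the two parallel flats, together with a transversality analysis of the crossing hyperplanes carried out slice by slice. Since $F_1$ and $F_2$ are parallel, their convex hull is a flat strip isometric to $F_1\times[0,d]$ with $d=d(F_1,F_2)$, in which $F_1=F_1\times\{0\}$ and $F_2=F_1\times\{d\}$ (see \cite[Ch.~II.2]{bh}); write $p\colon F_1\to F_2$ for the associated parallelism isometry, which is affine and satisfies $d(z,p(z))\le d$ for all $z$. For $s\in[0,d]$ let $F'_s=F_1\times\{s\}$ be the intermediate flat, so that $F'_0=F_1$, $F'_d=F_2$, and each $F'_s$ is parallel to $F_1$. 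For a hyperplane $h$ I write $U^+,U^-$ for the two open halfspaces it bounds, so that ``$h$ crosses $F$'' means $F\cap U^+\neq\emptyset\neq F\cap U^-$.

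For the equality $\h(F_1)=\h(F_2)$ it suffices by symmetry to show $\h(F_1)\subseteq\h(F_2)$, and the key point is that $p$ cannot move a point across $h$ once that point is far enough from $h$. Fix $h\in\h(F_1)$. Since $h\cap F_1$ is a codimension-$1$ flat, pick a geodesic ray $\gamma$ in $F_1$ issuing from a point of $h\cap F_1$ through a point of $F_1\cap U^+$. The function $t\mapsto d(\gamma(t),h)$ is convex, vanishes at $t=0$, and is positive for small $t>0$; hence it grows without bound, and for large $t$ the point $x=\gamma(t)$ lies in $F_1\cap U^+$ with $d(x,h)>d$. If $p(x)$ were not in $U^+$, the geodesic $[x,p(x)]$ would meet $h$, forcing $d(x,h)\le d(x,p(x))\le d$, a contradiction; thus $p(x)\in F_2\cap U^+$. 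The same argument on the $U^-$ side gives $F_2\cap U^-\neq\emptyset$, so $h\in\h(F_2)$. Running this argument verbatim with $F'_s$ (and its parallelism map) in place of $F_2$ shows, in addition, that every $h\in\h(F_1)$ crosses each intermediate flat $F'_s$; this is used below.

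It remains to prove that $h\cap F_1$ and $h\cap F_2$ are parallel, and here lies the main obstacle: the parallelism map need not carry $h\cap F_1$ into $h$, since the wall $h$ may ``shear'' across the strip, so one cannot simply transport $h\cap F_1$ to $h\cap F_2$ along $p$. To handle this I analyse the convex set $h\cap(F_1\times[0,d])$ slice by slice. For $s\in[0,d]$ set $W_s=\{w\in F_1:(w,s)\in h\}$, the $F_1$-coordinate description of $h\cap F'_s$; by the previous paragraph this is an affine hyperplane in $F_1\cong\E^n$. Likewise the slice $\{w:(w,s)\in U^+\}$ of the convex set $U^+\cap(F_1\times[0,d])$ is exactly the open halfspace of $W_s$ on the $U^+$ side; write it as $\{w:\langle w,\nu_s\rangle>c_s\}$ with $\nu_s$ a unit normal pointing towards $U^+$.

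Convexity of $U^+\cap(F_1\times[0,d])$ yields, for the midpoint height, the Minkowski inclusion $\tfrac12\big(\{\langle w,\nu_0\rangle>c_0\}+\{\langle w,\nu_d\rangle>c_d\}\big)\subseteq\{w:\langle w,\nu_{d/2}\rangle>c_{d/2}\}$. If $W_0$ and $W_d$ were not parallel, i.e.\ $\nu_0\neq\pm\nu_d$, then the Minkowski sum of the two halfspaces on the left would be all of $\E^n$, contradicting that the right-hand slice is a proper halfspace (proper because $h$ crosses $F'_{d/2}$). Hence $\nu_0=\pm\nu_d$, so $W_0$ and $W_d$ are parallel affine hyperplanes of $F_1$. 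Finally $W_0=h\cap F_1$ and $p(W_d)=h\cap F_2$, so applying the isometry $p$ shows $p(h\cap F_1)$ and $h\cap F_2$ are parallel in $F_2$; since $h\cap F_1$ and $p(h\cap F_1)$ are at Hausdorff distance $\le d$ and parallelism is an equivalence relation, we conclude that $h\cap F_1$ and $h\cap F_2$ are parallel.
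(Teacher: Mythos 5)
Your proof is correct. For the first assertion ($\h(F_1)=\h(F_2)$) your argument is essentially the paper's: both use convexity of $t\mapsto d(\gamma(t),h)$ along rays in $F_1$ to produce points of $F_1$ on either side of $h$ at distance greater than $d(F_1,F_2)$, and then observe that such points cannot switch sides under the parallelism map. For the ``moreover'' part, however, you take a genuinely different route. The paper works in the convex hull $E\cong F_1\times[0,a]$ of the two flats and invokes the carrier structure of the hyperplane (the product neighborhood isometric to $h\times[0,1]$) to assert that $E\cap h$ is a convex codimension-one surface of $E$, hence itself isometric to a product $(h\cap F_1)\times[0,a']$, from which parallelism of its two ends is immediate. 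You instead slice the strip at heights $0$, $d/2$ and $d$, write the trace of $U^+$ on each slice as a Euclidean halfspace with unit normal $\nu_s$ (using the fact, stated in the paper just before the lemma, that a crossing hyperplane meets a flat in a codimension-one flat), and use midpoint convexity of $U^+\cap(F_1\times[0,d])$ to obtain a Minkowski-sum inclusion forcing $\nu_0=\pm\nu_d$: two halfspaces with non-proportional normals have Minkowski sum all of $\E^n$, which cannot sit inside the proper halfspace cut out on the middle slice. The paper's approach buys brevity and a stronger conclusion (an explicit strip structure on $E\cap h$), at the cost of a terse justification of the step ``thus $E\cap h$ is isometric to $(h\cap F_1)\times[0,a']$''; yours buys a self-contained, purely Euclidean argument that uses nothing about cube complexes beyond convexity of hyperplanes and halfspaces plus the crossing fact, and it makes rigorous exactly the point the paper glosses over, namely that the wall may shear across the strip so that $h\cap F_1$ cannot simply be transported to $h\cap F_2$ by the parallelism map. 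One small remark: convexity of the \emph{open} halfspaces, while true in $\czero$ cube complexes, is not actually needed in your argument --- if one only grants convexity of the closed halfspaces $h\cup U^{\pm}$, the midpoint lands in the closed halfspace of the middle slice, and $\E^n$ cannot be contained in a proper closed halfspace either, so the contradiction persists.
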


\begin{proof}
First we claim that if $h$ crosses $F_1$ then for any $N>0$, there exist $x_1,x_2 \in~F_1$ such that they are on different sides of $h$, and we have $d(x_1,h)>N$ and $d(x_2,h)>N$. Note that the first assertion of the lemma follows readily from this claim. To see the claim, first take $y_0\in F_1\cap h$ and $y_1,y_2\in F_1$ on different sides of $h$. Let $r_1:[0,\infty)\to F_1$ be a ray emanating from $y_0$ and passing through $y_1$. Then the convexity of the function $t\to d(r_1(t),h)$ implies that $\lim_{t\to\infty} d(r_1(t),h)=\infty$ and $r_1\cap h=\{y_0\}$. Thus we can define $x_1$ to be $r_1(t)$ for a sufficiently large $t$. Similarly we can find $x_2$.

Now we prove the `moreover' statement. Let $E$ be the convex hull of $F_1$ and $F_2$. By \cite[Chapter II.2.12]{bh}, $E$ is isometric to $F_1\times[0,a]$ where $a=d(F_1,F_2)$. Since $h$ has a product neighborhood isometric to $h\times [0,1]$, $E\cap h$ is a convex codimension 1 surface of $E$. Thus $E\cap h$ is isometric to $(h\cap F_1)\times[0,a']$ for some $a'\ge a$. Hence $h\cap F_1$ and $h\cap F_2$ are parallel.
\end{proof}

Let $G$ be a group acting on $X$ properly by cubical automorphisms. Let $H\le G$ be a free abelian subgroup and suppose $H$ acts on a flat $F\subset X$ cocompactly. 
Since for $k\in \comm{G}{H}$, flats $F$ and $k F$ are parallel, we get that $k\h(F)=\h(kF)=\h(F)$ by Lemma~\ref{lem:intersection1}. This shows that $\h(F)$ is $\comm{G}{H}$--invariant. 

For $h_1,h_2\in \h(F)$, we define $h_1\sim h_2$ if $h_1\cap F$ and $h_2\cap F$ are parallel. It is clear that $\sim$ is an equivalence relation. Since each element in $\h(F)$ intersects $F$ in a codimension 1 flat, any pair of non-equivalent hyperplanes in $\h(F)$ have non-empty intersection. Since $X$ is finite dimensional, the collection $\h(F)$ has finitely many equivalence classes, which we denote by $\{\h_i(F)\}_{i=1}^n$.

Choose a basepoint $o\in F$. For each $i$, let $\vec{v}_i$ be a non-zero vector based at $o$ such that it is orthogonal to $h\cap F$ for some $h\in \h_i(F)$.

\begin{lem}
	\label{lem:span}
The flat $F$ is spanned by $\{\vec{v}_i\}_{i=1}^n$.
\end{lem}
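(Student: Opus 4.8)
The plan is to argue by contradiction. I identify $F$ with a Euclidean vector space based at the chosen point $o$, so that each family $\h_i(F)$ consists of hyperplanes whose traces $h\cap F$ are mutually parallel codimension-one affine subspaces of $F$ with common normal direction $\vec{v}_i$. I then suppose that $\{\vec{v}_i\}_{i=1}^n$ fails to span $F$ and produce arbitrarily long geodesic segments inside $F$ crossing no hyperplane of $X$, contradicting the standard fact that a $\czero$ geodesic must cross a number of hyperplanes proportional to its length.

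The first observation I would record is that every hyperplane $h$ of $X$ separating two points of $F$ belongs to $\h(F)$: if $x,y\in F$ lie on opposite sides of $h$, then the geodesic $[x,y]\subset F$ meets $h$, so $F$ is not contained in a single halfspace bounded by $h$, i.e.\ $h\in\h(F)$. Next, assuming $\{\vec{v}_i\}_{i=1}^n$ spans a proper subspace of $F$, I choose a unit vector $\vec{w}\in F$ orthogonal to every $\vec{v}_i$. For each $h\in\h(F)$ the trace $h\cap F$ is orthogonal to some $\vec{v}_i$, so $\vec{w}$ lies in the direction of $h\cap F$; hence the segment $\sigma_t=[o,\,o+t\vec{w}]$ is parallel to $h\cap F$ and meets $h$ only within $h\cap F$, never passing from one open halfspace of $h$ to the other. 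Thus $\sigma_t$ crosses no hyperplane of $\h(F)$, and by the first observation it crosses no hyperplane of $X$ at all; equivalently, no hyperplane of $X$ separates its endpoints $o$ and $o+t\vec{w}$.

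The final step is a standard lower bound on the number of hyperplanes crossed by a $\czero$ geodesic in a finite dimensional cube complex. The intersection of a geodesic with any closed cube is a subsegment of length at most $\sqrt{\dim X}$, so a geodesic of length $L$ meets at least $L/\sqrt{\dim X}$ distinct maximal cubes, and at each transition between consecutive maximal cubes it crosses a hyperplane. Since a $\czero$ geodesic crosses each hyperplane at most once, distinct transitions account for distinct hyperplanes, and the endpoints of $\sigma_t$ are therefore separated by at least $t/\sqrt{\dim X}-1$ hyperplanes. Letting $t\to\infty$ contradicts the conclusion of the previous paragraph, so $\{\vec{v}_i\}_{i=1}^n$ must span $F$.

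I expect the only delicate point to be the justification that each passage between consecutive maximal cubes crosses a genuinely new hyperplane, which requires a little bookkeeping with the product structure near the separating face. Alternatively, one can bypass this entirely by invoking the comparison $d_{\ell^2}\le d_{\ell^1}$ between the $\czero$ metric and the combinatorial metric on $X$ (the latter counting separating hyperplanes on the $0$--skeleton, see Sageev \cite{sageevnotes}); this immediately gives that the number of hyperplanes separating $o$ and $o+t\vec{w}$ dominates $t$ up to a bounded error, which suffices for the contradiction.
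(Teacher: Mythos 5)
Your proof is correct and follows essentially the same route as the paper: both pick a direction in $F$ orthogonal to all the $\vec{v}_i$ (your $\vec{w}$, the paper's line $\ell$), and both derive a contradiction from the fact that any hyperplane crossing that direction lies in $\h(F)$ and hence has trace parallel to it. The only difference is that the paper simply asserts the existence of a hyperplane $h_0$ crossing $\ell$, treating as standard the separation fact that you establish quantitatively by counting hyperplanes crossed by long geodesics, so your final paragraph is an expanded justification of a step the paper takes as known.
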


\begin{proof}
Suppose the contrary is true. Then there is a line $\ell\subset F$ which is orthogonal to each $\vec{v}_i$. Let $h_0$ be a hyperplane crossing $\ell$. Then $h_0\in \h(F)$. It follows from the choice of $\ell$ that $h\cap F$ contains a line parallel to $\ell$ for each $h\in\h(F)$. Thus $h_0\neq h$ and $h_0\cap h\neq\emptyset$ for each $h\in\h(F)$, which yields a contraction.
\end{proof}

\begin{lem}
	\label{lem:finite index1}
There is a finite index subgroup $L$ of $\comm{G}{H}$ such that $L(\h_i(F))=\h_i(F)$ for each $i$.
\end{lem}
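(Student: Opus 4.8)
The plan is to exhibit $L$ as the kernel of a permutation action of $\comm{G}{H}$ on the finite index set $\{1,\dots,n\}$ labelling the classes $\{\h_i(F)\}_{i=1}^n$. Since we already know that $\h(F)$ is $\comm{G}{H}$--invariant, the whole statement reduces to the claim that each $\alpha\in\comm{G}{H}$ respects the equivalence relation $\sim$ on $\h(F)$, and hence carries a full class onto a single class. Once this is established, the assignment sending $\alpha$ to the induced permutation of classes is a homomorphism $\comm{G}{H}\to S_n$, and its kernel $L$ is a finite index subgroup fixing each $\h_i(F)$ setwise, which is exactly what is asserted.

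The heart of the argument is therefore to show, for $\alpha\in\comm{G}{H}$ and $h_1,h_2\in\h(F)$, that $h_1\sim h_2$ implies $\alpha h_1\sim \alpha h_2$. First I would note that $\alpha h_1,\alpha h_2\in\h(F)$ by invariance of $\h(F)$, and recall that $\alpha F$ is parallel to $F$. Applying the isometry $\alpha$ to the parallel codimension $1$ flats $h_1\cap F$ and $h_2\cap F$ shows that $\alpha(h_i\cap F)=(\alpha h_i)\cap(\alpha F)$, and these two flats are parallel, i.e.\ $(\alpha h_1)\cap(\alpha F)\parallel(\alpha h_2)\cap(\alpha F)$. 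Now $F$ and $\alpha F$ are parallel and each $\alpha h_i$ crosses both of them, so the `moreover' clause of Lemma~\ref{lem:intersection1} (with $F_1=F$, $F_2=\alpha F$, $h=\alpha h_i$) gives $(\alpha h_i)\cap F\parallel(\alpha h_i)\cap(\alpha F)$ for $i=1,2$. Chaining these parallelisms and using that parallelism of flats is an equivalence relation yields $(\alpha h_1)\cap F\parallel(\alpha h_2)\cap F$, that is, $\alpha h_1\sim\alpha h_2$.

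Applying the same reasoning to $\alpha^{-1}\in\comm{G}{H}$ gives the reverse implication, so in fact $h_1\sim h_2$ if and only if $\alpha h_1\sim\alpha h_2$. This biconditional guarantees that $\alpha$ maps each class $\h_i(F)$ \emph{onto} a class (not merely into one), and since $\alpha$ is a bijection of $\h(F)$ these assignments define a genuine permutation of $\{1,\dots,n\}$. The resulting map $\comm{G}{H}\to S_n$ is clearly a homomorphism; its kernel $L$ has finite index because $S_n$ is finite, and by construction $L(\h_i(F))=\h_i(F)$ for every $i$.

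The only delicate point is the passage between $(\alpha h_i)\cap(\alpha F)$ and $(\alpha h_i)\cap F$, i.e.\ transferring intersections with the translated flat $\alpha F$ back to intersections with $F$. This is precisely where the parallelism $F\parallel\alpha F$ and Lemma~\ref{lem:intersection1}, together with transitivity of parallelism, do the work; everything else is formal bookkeeping about a finite permutation action. A tempting but unnecessary alternative would be to argue via the homomorphism $\Phi\colon\comm{G}{H}\to O(n,\mathbb R)$ and the induced action on the directions $\{\vec v_i\}$, but the direct combinatorial verification that $\sim$ is preserved is cleaner and avoids tracking the correspondence between hyperplane classes and boundary directions.
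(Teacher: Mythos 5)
Your proof is correct, but it takes a genuinely different route from the paper's. You prove directly that every $\alpha\in\comm{G}{H}$ is compatible with the equivalence relation $\sim$: you apply $\alpha$ to the parallel flats $h_1\cap F$ and $h_2\cap F$, use the `moreover' clause of Lemma~\ref{lem:intersection1} (with $F_1=F$, $F_2=\alpha F$, which are parallel, and $h=\alpha h_i\in\h(F)$) to transfer $(\alpha h_i)\cap(\alpha F)$ back to $(\alpha h_i)\cap F$, and chain the parallelisms by transitivity; the desired $L$ is then the kernel of the induced homomorphism $\comm{G}{H}\to S_n$. All the ingredients you invoke ($\comm{G}{H}$--invariance of $\h(F)$, parallelism of $F$ and $\alpha F$, transitivity of parallelism) are indeed available at that point in the paper, so the argument goes through. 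The paper instead sidesteps the question of whether $\sim$ is preserved: it introduces \emph{orthogonal partitions} of $\h(F)$ (partitions in which hyperplanes from distinct parts pairwise cross), observes that any two such partitions admit a common orthogonal refinement, so by finite dimensionality there is a canonical \emph{finest} orthogonal partition, which $\comm{G}{H}$ automatically permutes because it is defined purely in terms of the crossing relation preserved by cubical automorphisms; a finite index subgroup $L$ then stabilizes each part, and since $\{\h_i(F)\}_{i=1}^n$ is itself an orthogonal partition (non-equivalent hyperplanes cross), each $\h_i(F)$ is a union of parts and is also stabilized. Your argument buys a stronger conclusion — the full commensurator genuinely permutes the classes $\h_i(F)$, with an explicit index bound $n!$ for $L$ — at the cost of the geometric verification via Lemma~\ref{lem:intersection1}; the paper's argument buys economy of geometric input (only the combinatorial canonicity of the finest partition is used) at the cost of an auxiliary combinatorial notion and a possibly smaller $L$.
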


\begin{proof}
An \emph{orthogonal partition} of $\h(F)$ is a partition $\h(F)=\sqcup_{i=1}^m W_i$ such that for any $i\neq j$, each element in $W_i$ crosses every element in $W_j$. Note that every two orthogonal partitions of $\h(F)$ have a common refinement which is an orthogonal partition. Thus $\h(F)$ has a canonical finest orthogonal partition $\h(F)=\sqcup_{i=1}^{l} W'_i$ (since $X$ is finite dimensional, $l<\infty$), and $\comm{G}{H}$ permutes the factors of this partition. Thus $\comm{G}{H}$ has a finite index subgroup $L$ such that $L(W'_i)=W'_i$ for $1\le i\le l$. Since $\h(F)=\sqcup_{i=1}^n \h_i(F)$ is also an orthogonal partition, the lemma follows.
\end{proof}

\begin{cor}
	\label{cor:parallel}
Let $L \le \comm{G}{H}$ be as in Lemma~\ref{lem:finite index1}. Then for each $k\in L$ and $h\in \h(F)$, $k(h\cap F)$ and $h\cap F$ are parallel.
\end{cor}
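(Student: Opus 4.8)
The plan is to combine transitivity of parallelism with Lemma~\ref{lem:intersection1}, exploiting the defining property of $L$ from Lemma~\ref{lem:finite index1}. Fix $k\in L$ and $h\in\h(F)$, and say $h\in\h_i(F)$ for some $i$. The strategy is to produce a chain of parallelisms connecting $k(h\cap F)$ to $h\cap F$ by passing through the intermediate flat $(kh)\cap F$.

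First I would observe that since $k\in L$ preserves the equivalence class $\h_i(F)$, the hyperplane $kh$ again lies in $\h_i(F)$. By the very definition of the relation $\sim$, this means $(kh)\cap F$ and $h\cap F$ are parallel. This is the step that directly uses Lemma~\ref{lem:finite index1} and is the reason we passed to $L$ rather than working with all of $\comm{G}{H}$.

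Next I would relate $k(h\cap F)$ to $(kh)\cap F$. Since $k\in\comm{G}{H}$, the flats $F$ and $kF$ are parallel, so Lemma~\ref{lem:intersection1} applies to the pair $F, kF$: for the hyperplane $kh\in\h(F)=\h(kF)$, the intersections $(kh)\cap F$ and $(kh)\cap(kF)$ are parallel. The small but essential bookkeeping point here is that, $k$ being a cubical isometry, one has $(kh)\cap(kF)=k(h\cap F)$, so this reads as $k(h\cap F)$ being parallel to $(kh)\cap F$.

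Finally, since parallelism of flats is an equivalence relation, transitivity gives that $k(h\cap F)$ is parallel to $h\cap F$, which is the claim. I expect the main (mild) obstacle to be purely verificational: making sure the identification $(kh)\cap(kF)=k(h\cap F)$ is justified by equivariance of the action on hyperplanes, and checking that Lemma~\ref{lem:intersection1} is invoked for the correct parallel pair of flats. No curvature or dimension estimates are needed beyond what is already encoded in Lemma~\ref{lem:intersection1}.
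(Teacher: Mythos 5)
Your proof is correct and is essentially identical to the paper's: the same first step via Lemma~\ref{lem:finite index1} (since $kh\sim h$, the flats $(kh)\cap F$ and $h\cap F$ are parallel), the same application of Lemma~\ref{lem:intersection1} to the parallel pair $F$, $kF$ with the hyperplane $kh$, and the same conclusion by transitivity of parallelism. The only difference is cosmetic: you spell out the equivariance identity $(kh)\cap(kF)=k(h\cap F)$, which the paper leaves implicit.
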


\begin{proof}
By Lemma~\ref{lem:finite index1}, $kh$ and $h$ are in the same equivalence class. Thus $F\cap h$ and $F\cap kh$ are parallel. By Lemma~\ref{lem:intersection1}, flats $F\cap kh$ and $kF\cap kh$ are parallel (as $F$ and $kF$ are parallel). Thus the corollary follows.
\end{proof}

\begin{thm}\label{thm:cubicalpropc}
Suppose $G$ acts properly on a finite dimensional $\czero$ cube complex $X$ by cubical automorphisms. Let $H\le G$ be a finitely generated virtually abelian subgroup and let $K\le \comm{G}{H}$ be a finitely generated subgroup. Then $K$ normalizes a finite index subgroup of $H$. 
\end{thm}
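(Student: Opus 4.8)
The plan is to reduce the statement to showing that the image $\Phi(K)$ of the map $\Phi\colon\comm{G}{H}\to O(n,\R)$ from Definition~\ref{def:honolomy} is finite, and then to invoke Proposition~\ref{prop:normalize}. Since $\comm{G}{H}$ depends only on the commensurability class of $H$, and a finite-index subgroup of a finite-index free abelian subgroup of $H$ is again finite-index in $H$, I may assume $H$ is free abelian acting cocompactly on a flat $F$. I then set up $\h(F)$, its equivalence classes $\h_i(F)$, the vectors $\vec v_i$, and the finite-index subgroup $L\le\comm{G}{H}$ preserving each class, exactly as in Lemmas~\ref{lem:span} and~\ref{lem:finite index1} and Corollary~\ref{cor:parallel}.

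The geometric heart of the argument is to translate Corollary~\ref{cor:parallel} into a statement about the linear part $\Phi(k)$ for $k\in L$. Recall from the proof of Lemma~\ref{lem:comm} that $\phi(k)$ is the composition $F\xrightarrow{k}kF\xrightarrow{p}F$, where $p$ is the parallelism map. Since $k$ splits on $P_F=F\times F^\perp$ (Lemma~\ref{lem:split}) and $\phi(k)(F)=F$, the flat $kF$ is a horizontal flat $F\times\{y'\}$ and $p$ is translation in the $F^\perp$-factor. Consequently, for a hyperplane $h\in\h_i(F)$ one has $\phi(k)(h\cap F)=p\bigl(k(h\cap F)\bigr)$; since $p$ preserves parallelism and Corollary~\ref{cor:parallel} gives $k(h\cap F)\parallel(h\cap F)$, transitivity yields $\phi(k)(h\cap F)\parallel(h\cap F)$ inside $F$. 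In other words, the isometry $\phi(k)$ carries the codimension-one flat $h\cap F$ to a parallel one, so its linear part $\Phi(k)$ fixes the hyperplane direction $\vec v_i^{\perp}$. Being orthogonal, $\Phi(k)$ must then satisfy $\Phi(k)\vec v_i=\pm\vec v_i$ for every $i$.

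The conclusion now follows by elementary linear algebra and a counting argument. By Lemma~\ref{lem:span} the vectors $\{\vec v_i\}_{i=1}^n$ span $F$, so an orthogonal transformation of $F$ is determined by its action on them; since $\Phi(k)\vec v_i=\pm\vec v_i$, the element $\Phi(k)$ is determined by the resulting sign vector. Hence $k\mapsto(\epsilon_i(k))_i$ embeds $\Phi(L)$ into $\{\pm1\}^n$, so $\Phi(L)$ is finite. Because $[\comm{G}{H}:L]<\infty$ and $[\Phi(\comm{G}{H}):\Phi(L)]\le[\comm{G}{H}:L]$, the full image $\Phi(\comm{G}{H})$ is finite as well, and in particular $\Phi(K)$ is finite for the finitely generated subgroup $K$. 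Proposition~\ref{prop:normalize} then shows that $K$ normalizes a finite-index subgroup of $H$, which completes the proof.

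I expect the main obstacle to lie in the second paragraph: one must carefully track how the ambient parallelism of $k(h\cap F)$ and $h\cap F$ in $X$ descends, under the projection $p$ and the product splitting of $P_F$, to genuine parallelism of the flats $\phi(k)(h\cap F)$ and $h\cap F$ inside the single flat $F$, and hence to the linear relation $\Phi(k)\vec v_i=\pm\vec v_i$. Once this identification is established, the passage from a spanning set to finiteness of $\Phi(L)$, and from $L$ to all of $\comm{G}{H}$, is formal.
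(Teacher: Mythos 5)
Your proposal is correct and follows essentially the same route as the paper's proof: reduce to $H$ free abelian, invoke Lemmas~\ref{lem:span} and~\ref{lem:finite index1} and Corollary~\ref{cor:parallel} to get $\Phi(k)\vec v_i=\pm\vec v_i$ on a spanning set, conclude finiteness of the image of $\Phi$ on a finite-index subgroup, and finish with Proposition~\ref{prop:normalize}. The only (harmless) differences are that you make explicit the step the paper glosses over --- descending from ambient parallelism of $k(h\cap F)$ and $h\cap F$ to parallelism inside $F$ via the parallelism map $p$ --- and that you deduce finiteness of all of $\Phi(\comm{G}{H})$ rather than just $\Phi(K\cap L)$.
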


\begin{proof}
We can assume $H$ is free abelian by Lemma~\ref{lem:comm invariant}. Let $\phi$ and $\Phi$ be the maps in Definition~\ref{def:honolomy}. By Proposition~\ref{prop:normalize} it suffices to show that $\Phi(K)$ is finite. Define $K' = K \cap L$ where $L$ is as in Lemma~\ref{lem:finite index1}. Note that $[K : K']$ is finite since $[\comm{G}{H} : L]$ is finite.  Then Corollary~\ref{cor:parallel} implies that for any $k \in K'$ and for any $h\in \h(F)$, flats $h\cap F$ and $\phi(k)(h\cap F)$ are parallel. Let $\{\vec{v}_i\}_{i=1}^n$ be as in Lemma~\ref{lem:span}, and let $\{s_i\}_{i=1}^n$ denote the corresponding points on Tits boundary $\partial F$. Then each element in $\Phi(K')$ maps $s_i$ to $s_i$ or $-s_i$. Since $\{\vec{v}_i\}_{i=1}^n$ spans $F$, we conclude that $\Phi(K')$ is finite and hence that $\Phi(K)$ is finite.
\end{proof}

\begin{remark}
We cannot relax the assumption in Theorem~\ref{thm:cubicalpropc} that $G$ acts by cubical automorphisms to $G$ acts by isometries (though for many cube complexes these two conditions are equivalent). This is because LM groups in Definition~\ref{def:LM} clearly act on a $\czero$ cube complex by isometries, but the action does not respect the cubical structure.  	
\end{remark}

\begin{remark}
It follows from \cite{niblo1997groups} that if a group $G$ acts on Davis complex for a Coxeter group properly by cellular isometries, then $G$ satisfies the assumptions of Theorem~\ref{thm:cubicalpropc} and hence $G$ satisfies $\conc$. More generally, we speculate that by the same proof, $\conc$ should hold for groups acting properly by cellular isometries on $\czero$ piecewise Euclidean polyhedral complexes whose cells are isometric to Coxeter cells.
\end{remark}

\subsection{Virtually special actions}
In this section, we comment on an important class of actions which are \emph{virtually special}. The main point is that the pathological behavior in Proposition~\ref{prop:anti torus} cannot happen when the action is virtually special.
\begin{defn}
The action of $G$ on a $\czero$ cube complex $X$ is \emph{virtually special} if there exists a torsion free finite index subgroup $G'\le G$ such that $X/G'$ is a (not necessarily compact) special cube complex in the sense of \cite{haglund2008special}.
\end{defn}

Recall that a group $G$ has the \emph{unique root property} if for any positive integer $n$ and arbitrary elements $x,y\in G$ the equality $x^n=y^n$ implies $x=y$ in $G$. We will need the following elementary property of groups with the unique root property.

\begin{lem}
	\label{lem:commute}
	Suppose $G$ has the unique root property and let $g,h\in G$. If $g^m$ and $h^n$ commute for some non-zero integers $m$ and $n$, then $g$ and $h$ commute.
\end{lem}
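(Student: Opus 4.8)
The plan is to reduce the statement to the following single-exponent claim: for any $u,v\in G$ and any non-zero integer $k$, if $u^k$ commutes with $v$ then $u$ commutes with $v$. Granting this claim, the lemma follows by two applications. First, applying the claim with $u=g$, $v=h^n$ and exponent $m$ shows that $g$ commutes with $h^n$. Consequently $h^n$ commutes with $g$, so applying the claim again with $u=h$, $v=g$ and exponent $n$ shows that $h$ commutes with $g$, which is the desired conclusion.

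To prove the claim, I would first reduce to the case $k>0$. Since an element of $G$ commutes with $v$ if and only if its inverse does, and $u^{-k}=(u^{k})^{-1}$, the hypothesis that $u^{k}$ commutes with $v$ is unchanged upon replacing $k$ by $-k$; hence we may assume $k\geq 1$. The key step is then a conjugation trick. Set $w=vuv^{-1}$. Using that $u^{k}$ commutes with $v$, equivalently that $vu^{k}v^{-1}=u^{k}$, we compute $w^{k}=vu^{k}v^{-1}=u^{k}$. Thus $w$ and $u$ have equal $k$-th powers, and the unique root property forces $w=u$, i.e.\ $vuv^{-1}=u$; this is exactly the statement that $u$ and $v$ commute.

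I do not expect a genuine obstacle here: the entire content lies in the observation that conjugating $u$ by $v$ produces an element with the same $k$-th power $u^{k}$, at which point the unique root property applies directly. The only point requiring a little care is the bookkeeping with the signs of $m$ and $n$, which is dispatched by the remark that the property of commuting with a fixed element is preserved under passing to inverses, so that the positive-exponent case suffices.
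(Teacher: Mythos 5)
Your proof is correct. The paper itself offers no argument for this lemma (it is stated as an ``elementary property'' with the proof omitted), and your argument is precisely the standard one that fills this gap: the conjugation trick $w=vuv^{-1}$, $w^{k}=vu^{k}v^{-1}=u^{k}$, followed by unique extraction of $k$-th roots, together with the two-step bootstrap (first $g$ commutes with $h^{n}$, then $h$ commutes with $g$) and the harmless reduction to positive exponents.
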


If $X/G'$ is a special cube complex, then $G'$ is a subgroup of a (possibly infinitely generated) right-angled Artin group \cite[Theorem 4.2]{haglund2008special}. Since any finitely generated right-angled Artin group is biorderable \cite{duchamp1992simple}, the group $G'$ is a union of biorderable groups. As biorderable groups have the unique root property \cite[Lemma 6.3]{MR2914863}, we get that $G'$ has the unique root property.


\begin{cor}
	\label{cor:centralizer}
Suppose $G$ acts properly on a finite dimensional $\czero$ cube complex $X$ by cubical automorphisms such that the action is virtually special, or more generally $G$ has a finite index subgroup $G'$ which has the unique root property. Let $H\le G$ be a finitely generated virtually abelian subgroup. Then $\comm{G}{H}$ is equal to the normalizer of a finite index subgroup of $H$. In particular, if the action $G\acts X$ is geometric then $\comm{G}{H}$ is finitely generated.
\end{cor}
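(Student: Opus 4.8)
The plan is to reduce to the unique root property (URP) hypothesis and then use it to upgrade the conclusion of Theorem~\ref{thm:cubicalpropc} --- that every \emph{finitely generated} subgroup of $\comm{G}{H}$ normalizes a finite index subgroup of $H$ --- to the statement that the \emph{whole} commensurator normalizes a single finite index subgroup. As recorded in the discussion following Lemma~\ref{lem:commute}, a virtually special action produces a finite index $G'\le G$ with the unique root property, so it suffices to treat the URP case. By Lemma~\ref{lem:comm invariant} we may also assume $H$ is free abelian. I would set $H'=H\cap G'$, a finite index free abelian subgroup of $H$ contained in $G'$; since $H$ and $H'$ are commensurable we have $\comm{G}{H}=\comm{G}{H'}$, and $Q:=\comm{G}{H}\cap G'=\comm{G'}{H'}$ is of finite index in $\comm{G}{H}$. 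Note $G'$ is torsion free, being a URP group.

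The first and main step is to show that $Q$ \emph{centralizes} $H'$. Fix $g\in Q$. Since $G'$ also acts properly on $X$ by cubical automorphisms, Theorem~\ref{thm:cubicalpropc} applied to $G'$ shows that the finitely generated subgroup $\langle g,H'\rangle\le\comm{G'}{H'}$ normalizes a finite index subgroup $H'_g\le H'$, with $H'_g\le G'$. By Theorem~\ref{thm:combinenormalizerminset}.\ref{thm:centralizer}, the group $\langle g,H'\rangle$ has a finite index subgroup centralizing $H'_g$, so some power $g^k$ with $k\ge 1$ centralizes $H'_g$. Now I take roots: for $h\in H'_g$ the elements $g^k$ and $h$ commute in $G'$, so Lemma~\ref{lem:commute} gives that $g$ and $h$ commute; hence $g$ centralizes $H'_g$. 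Finally, for an arbitrary $h\in H'$ pick $m\ge 1$ with $h^m\in H'_g$; then $g$ commutes with $h^m$, and a second application of Lemma~\ref{lem:commute} shows $g$ commutes with $h$. Thus $g$ centralizes all of $H'$, and as $g\in Q$ was arbitrary, $H'$ is central in $Q$.

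With $H'$ central in $Q$ and $[\comm{G}{H}:Q]$ finite, I would invoke Lemma~\ref{lem:ftindex} (taking $K'=Q$, $K=\comm{G}{H}$, and using $\comm{G}{H}=\comm{G}{H'}$ as the ambient commensurator) to produce a finite index subgroup $H''\le H'$ that is normal in $\comm{G}{H}$. Since $H''$ has finite index in $H$, and $N_G(H'')\le\comm{G}{H''}=\comm{G}{H}$ holds trivially, normality of $H''$ gives $\comm{G}{H}\le N_G(H'')$, whence $\comm{G}{H}=N_G(H'')$. For the final assertion, if the action is geometric then $N_G(H'')$ acts geometrically on $\minset{H''}$ by Theorem~\ref{thm:combinenormalizerminset}.\ref{thm:ruane}, and is therefore finitely generated.

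The hard part is precisely the passage from the finitely generated conclusion of Theorem~\ref{thm:cubicalpropc} to a uniform statement for the (possibly non-finitely generated) commensurator; this is exactly the pathology of Proposition~\ref{prop:anti torus}, where the subgroups $H'_g$ are forced to shrink without bound. The unique root property is the hypothesis that rules this out: it upgrades ``some power of $g$ centralizes'' to ``$g$ centralizes'', forcing the conjugation action of each $g$ to be trivial rather than merely of finite order, so that no shrinking occurs and Lemma~\ref{lem:ftindex} can assemble the local data into a single normal finite index subgroup.
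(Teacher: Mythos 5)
Your proof is correct and takes essentially the same approach as the paper's: both reduce to the finite index unique-root-property subgroup $G'$, combine Theorem~\ref{thm:cubicalpropc} with Theorem~\ref{thm:combinenormalizerminset}.\ref{thm:centralizer} to show that a power of each element of $\comm{G'}{H'}$ commutes with a power of each element of $H'$, strip the powers via Lemma~\ref{lem:commute} to conclude that $\comm{G'}{H'}$ centralizes $H'$, and then assemble the result with Lemma~\ref{lem:ftindex} and Theorem~\ref{thm:combinenormalizerminset}.\ref{thm:ruane}. The only cosmetic differences are that the paper argues with arbitrary finitely generated subgroups $K\le\comm{G'}{H}$ rather than element-by-element with $\langle g,H'\rangle$, and invokes Lemma~\ref{lem:commute} once with both exponents instead of twice in succession.
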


\begin{proof}
By replacing $H$ with $H \cap G'$ if necessary, we can assume that $H \le G'$. Note that in this case we have  $\comm{G'}{H}=\comm{G}{H}\cap G'$. It follows from Theorem~\ref{thm:cubicalpropc} and Theorem~\ref{thm:combinenormalizerminset}.\ref{thm:centralizer} that each finitely generated subgroup $K\le \comm{G'}{H}$ has a finite index subgroup $K'$ such that $K'$ centralizes a finite index subgroup $H'$ of $H$. Thus for any $k\in K$ and $h\in H$, there exist non-zero integers $n,m$ such that $k^n$ and $h^m$ commute. By applying Lemma~\ref{lem:commute} we get that $k$ and $h$ commute. Thus $K$ centralizes $H$. Since $\comm{G'}{H}$ is a union of finitely generated subgroups, $\comm{G'}{H}$ centralizes $H$. Since $\comm{G'}{H}$ has finite index in $\comm{G}{H}$, by Lemma~\ref{lem:ftindex} we get that $\comm{G}{H}$ normalizes a finite index subgroup $H''$ of $H$, and thus we conclude that $\comm{G}{H}=N_G(H'')$. If the action $G\acts X$ is geometric then $\comm{G}{H}$ is finitely generated by Theorem~\ref{thm:combinenormalizerminset}.\ref{thm:ruane}.
\end{proof}

\begin{remark}For virtually special actions, $\conc$ follows from \cite[Corollary~9]{ckrw} (since abelian subgroups of right-angled Artin groups, or, more generally, of $GL(n,\mathbb Z)$ are always separable, cf.\ Section~\ref{sec:C}). Thus in the proof of  Corollary~\ref{cor:centralizer} one can replace Theorem~\ref{thm:cubicalpropc} with \cite[Corollary~9]{ckrw}.
\end{remark}

%

\section{Products of symmetric spaces and Euclidean buildings}
\label{sec:buildings}
In this section we discuss how an intersection pattern of flats in a $\czero$ space interacts with $\conc$. The main example is a product of irreducible symmetric spaces of noncompact type and/or irreducible thick Euclidean Tits buildings.

\begin{prop}
	\label{prop:symmetricspace}
	Suppose $G$ acts properly by isometries on $X=X_1\times X_2\times\cdots \times X_n$ such that each $X_i$ is either a nonflat irreducible symmetric space of noncompact type or an irreducible thick Euclidean Tits building with cocompact affine Weyl group. Let $H\le G$ be a finitely generated semisimple virtually abelian subgroup and let $K\le \comm{G}{H}$ be a finitely generated subgroup. Then $K$ normalizes a finite index subgroup of $H$. 
\end{prop}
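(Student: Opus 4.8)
The plan is to deduce the statement from Proposition~\ref{prop:normalize}: after replacing $H$ by a free abelian finite-index subgroup (Lemma~\ref{lem:comm invariant}), it suffices to prove that the image $\Phi(K)$ of the holonomy homomorphism $\Phi\colon\comm{G}{H}\to O(\dim F,\mathbb R)$ from Definition~\ref{def:honolomy} is finite, where $F$ is an $H$--invariant flat on which $H$ acts cocompactly. The first step is to reinterpret $\Phi$ at infinity. For $\alpha\in\comm{G}{H}$ the flats $F$ and $\alpha F$ are parallel (Lemma~\ref{lem:split}), so they have the same Tits boundary $\partial_T(\alpha F)=\partial_T F$ and the parallelism map $p\colon\alpha F\to F$ is the identity on $\partial_T F$; hence $\Phi(\alpha)$ is exactly the restriction to $\partial_T F$ of the boundary isometry $\partial\alpha$ induced by $\alpha$ on $\partial_T X$. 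Since each $X_i$ is an irreducible symmetric space of noncompact type or an irreducible thick Euclidean building, $\partial_T X$ is the spherical join $\partial_T X_1 * \cdots * \partial_T X_n$ of thick irreducible spherical buildings, and every isometry of $X$ acts on $\partial_T X$ as an automorphism of this spherical building, preserving the type map $\theta\colon\partial_T X\to\Delta$ onto the model chamber $\Delta$ up to the finite group of diagram automorphisms and permutations of isometric factors. Passing to a finite-index subgroup of $K$, I may assume these finite ambiguities are trivial.

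Second, I would produce a canonical finite arrangement on $\partial_T F$ that $\Phi(K)$ preserves. Choosing maximal flats $A_i\supseteq\mathrm{proj}_i(F)$ in each factor and setting $A=A_1\times\cdots\times A_n$ gives a maximal flat (apartment) with $F\subseteq A$, whose boundary $\partial_T A$ is the finite Coxeter complex of the spherical Weyl group $W=W_1\times\cdots\times W_n$. I define \[\Sigma=\{\xi\in\partial_T F:\theta(\xi)\in\partial\Delta\}=\partial_T F\cap\theta^{-1}(\partial\Delta).\] Because $\theta^{-1}(\partial\Delta)\cap\partial_T A$ is exactly the finite union of the reflection walls of the Coxeter arrangement on $\partial_T A$, the set $\Sigma$ is a finite union of great subspheres of $\partial_T F$; moreover its description through $\theta$ shows that it is intrinsic, independent of the chosen apartment, and preserved by $\Phi(K)$. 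Decomposing $\Sigma$ into its maximal great-subsphere components, $\Phi(K)$ permutes these finitely many components.

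Third, I would check that the poles (unit normals) of the components of $\Sigma$ span $F$. The reflection walls of $W$ have normals the roots of $W$, which span $A$ precisely because no $X_i$ has a flat de Rham factor; this is where irreducibility together with the noncompact type and cocompact affine Weyl group hypotheses enter. Orthogonally projecting to the plane of $F$, the nonzero projections span $F$, and these are exactly the normals of the components of $\Sigma$. With a finite spanning family of directions $\{\pm u_j\}\subset\partial_T F$ permuted by $\Phi(K)$, the argument then concludes as in Theorem~\ref{thm:cubicalpropc}: the homomorphism from $\Phi(K)$ to the permutations of the components has finite image, and its kernel consists of orthogonal maps sending each $u_j$ to $\pm u_j$; since the $u_j$ span, this kernel is finite. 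Therefore $\Phi(K)$ is finite, and Proposition~\ref{prop:normalize} yields that $K$ normalizes a finite-index subgroup of $H$.

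The main obstacle I expect lies in the second and third steps: verifying that $\Sigma$ is genuinely a finite, apartment-independent, $\Phi(K)$--invariant arrangement, and that its normals span. For Euclidean buildings one can alternatively argue that the walls crossing $F$ form a locally finite, $H$--cocompact arrangement and hence fall into finitely many parallelism classes; but for symmetric spaces the walls are not discrete, so it seems essential to phrase finiteness through the type map $\theta$ at infinity rather than through the walls of $X$ themselves. Ensuring the spanning property is where the hypotheses that each $X_i$ is nonflat and irreducible are indispensable: without them a direction of $\partial_T F$ could be orthogonal to every wall, and $\Phi(K)$ would no longer be forced to be finite.
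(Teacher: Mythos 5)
Your overall strategy is the same as the paper's: reduce via Proposition~\ref{prop:normalize} to showing that $\Phi(K)$ is finite, identify $\Phi(\alpha)$ with the restriction to $\partial_T F$ of the boundary map $\partial\alpha$ (using Lemma~\ref{lem:split} and the fact that parallel flats share a Tits boundary), and exploit thickness to get that boundary maps respect the building structure. The gap lies exactly where you feared, in steps two and three, and it is concrete: your set $\Sigma=\partial_T F\cap\theta^{-1}(\partial\Delta)$ degenerates whenever the flat $F$ is contained in a wall of an apartment, i.e.\ whenever some root is orthogonal to $F$. In that case \emph{every} point of $\partial_T F$ has singular type, so $\Sigma=\partial_T F$, its only maximal great-subsphere component is $\partial_T F$ itself, and that component has no normal. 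Your assertion that the nonzero projections of the roots ``are exactly the normals of the components of $\Sigma$'' is then false, and the permutation-plus-kernel argument gives no constraint on $\Phi(K)$ at all. This case is not exotic and must be covered by the proposition: for instance, take $G$ acting on a product of three trees $T_1\times T_2\times T_3$ and $H\cong\mathbb{Z}^2$ acting cocompactly on $F=(\text{axis})\times(\text{axis})\times\{pt\}$, which lies in a wall; or take $\mathbb{Z}^2$ acting on a $2$--flat inside the wall $\{x_1=x_2\}$ of a maximal flat of $SL(4,\mathbb R)/SO(4)$. (Singular directions are a genuine phenomenon in this setting, cf.\ Definition~\ref{defn:regular and singular} and Proposition~\ref{prop:highest}.)

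The gap is repairable inside your framework: replace $\theta^{-1}(\partial\Delta)$ by the stratification of $\partial_T F$ according to the face of $\Delta$ containing the type of each point, and let $\Sigma$ be the locus where that face has dimension strictly smaller than its generic (maximal) value on $\partial_T F$. In an apartment $A\supseteq F$ this locus equals $\partial_T F\cap\bigcup\{\partial_T H_\alpha : F\not\subseteq H_\alpha\}$, a finite union of \emph{proper} codimension-one great subspheres of $\partial_T F$ whose normals are the nonzero projections $\mathrm{proj}_F(\alpha)$; these span $F$ by your root-spanning argument (roots orthogonal to $F$ project to zero and contribute nothing), and the locus is intrinsic and invariant under the type-preserving finite-index subgroup of $K$, so your concluding finiteness argument then works in all cases. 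It is worth noting that the paper's own proof sidesteps this entire issue: it takes $S$ to be the smallest isometrically embedded sphere containing $\partial_T F$ that is a subcomplex of $\partial_T X$, shows each $\partial k$ preserves $S$ by minimality, and concludes that $\Phi(K)$ is finite simply because a compact polyhedral complex admits only finitely many isometries respecting its polyhedral structure --- no spanning argument and no regular-versus-singular case distinction is needed.
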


\begin{proof}
	Recall that the Tits boundary $\partial_T X_i$ is an irreducible spherical building, which has the structure of a simplicial complex. A top--dimensional isometrically embedded sphere in $\partial_T X_i$ is called an \emph{apartment} and $\partial_T X_i$ is a union of apartments. Since we are assuming thickness, each top--dimensional simplex is an intersection of apartments. Let $q_i:X_i\to X_i$ be an isometry. Then $\partial q_i:\partial_T X_i\to \partial_T X_i$ clearly preserves the collections of apartments, and hence it respects the simplicial structure. The Tits boundary $\partial_T X$ is a spherical join of irreducible spherical buildings, and thus it has a structure of a polyhedral complex. \emph{Apartments} in $\partial_TX$ are spherical joins of apartments in each of its factors. Any isometry $q \colon X\to X$ respects the product decomposition (up to permutation of factors), and therefore the induced boundary map $\partial q \colon \partial_T X \to \partial_T X$ respects the polyhedral structure of $\partial_T X$.

	Let $H\le G$ be a finitely generated semisimple virtually abelian subgroup and let $K\le \comm{G}{H}$ be a finitely generated subgroup. By Lemma~\ref{lem:comm invariant} we can assume that $H$ is free abelian. Then $H$ acts on a flat $F\subset X$ cocompactly by translations. Let $S\subset \partial_T X$ be the smallest isometrically embedded sphere containing $\partial_T F$ which is also a subcomplex. Note that at least one such sphere exists, since $\partial_T F$ is contained in an apartment \cite[Proposition 3.9.1]{kleiner1997rigidity}. Let $k\in K$. We claim $\partial k(S)=S$, where $\partial k$ is the boundary map. By Lemma~\ref{lem:split}, $\partial k(\partial_T F)=\partial_T F$. Since $\partial k$ respects the polyhedral structure and $S$ is the smallest spherical subcomplex containing $\partial_T F$, the claim follows.
	The action of $K$ on $\partial_T X$ provides a homomorphism $\beta:K\to \Isom(\partial_T F)$ (note that $\beta$ equals to $\Phi$ from Definition~\ref{def:honolomy}). By the previous claim, each element in $\beta(K)$ is the restriction of an isometry of $S$ which respects the polyhedral complex structure on $S$. Thus $\beta(K)$ is finite, which implies the theorem by Proposition~\ref{prop:normalize}.
\end{proof}

What is really happening in Proposition~\ref{prop:symmetricspace} is that flats in $X$ branch in sufficiently many directions. To this end, we formulate Proposition~\ref{prop:branching} below without referring to the structure of symmetric spaces and Euclidean buildings, where Proposition~\ref{prop:symmetricspace} is a special case of Proposition~\ref{prop:branching}.

\begin{defn}
	\label{def:branching}
	Let $F$ be a flat in a $\czero$ space $X$. Let $\partial_T F$ be the Tits boundary of $F$. A subsphere $S$ of $\partial_T F$ is \emph{singular} if there is a subflat $F_0\subset F$ with $\partial_T F_0=S$ such that the parallel set $P_{F_0}$ of $F_0$ is not contained in a bounded neighborhood of $P_F$. 
\end{defn}

The following generalization of Proposition~\ref{prop:symmetricspace} is straightforward.

\begin{prop}
	\label{prop:branching}
	Suppose $G$ acts on a $\czero$ space $X$ properly by isometries. Let $H\le G$ be a finitely generated free abelian group acting cocompactly on a flat $F\subset X$ by translations. Suppose the collection of all singular subspheres in $\partial_T F$ is rigid in the sense that there are only finitely many isometries of $\partial_T F$ permuting the singular subspheres. Then any finitely generated subgroup $K$ in $\comm{G}{H}$ normalizes a finite index subgroup of $H$.
\end{prop}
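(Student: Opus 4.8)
The plan is to deduce everything from Proposition~\ref{prop:normalize}. Since $K\le\comm{G}{H}$ is finitely generated, that proposition reduces the claim to showing that $\Phi(K)$ is finite, where $\Phi\colon\comm{G}{H}\to O(n,\mathbb R)$ is the boundary map of Definition~\ref{def:honolomy} recording the action on $\partial_T F$ (with $n=\dim F$). Each $\Phi(k)$ is an isometry of $\partial_T F$, so in view of the rigidity hypothesis it suffices to prove that every $\Phi(k)$ permutes the singular subspheres of $\partial_T F$; then $\Phi(K)$ lands in the finite group of isometries of $\partial_T F$ permuting the singular subspheres, and we are done.

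First I would record that singularity of a subsphere $S\subset\partial_T F$ is intrinsic to $S$: any two subflats $F_0,F_0'\subset F$ with $\partial_T F_0=\partial_T F_0'=S$ are parallel inside the Euclidean space $F$, so $P_{F_0}=P_{F_0'}$, and the defining condition is independent of the chosen witness. The core of the argument is then a transport computation. Fix $k\in\comm{G}{H}$. By Lemma~\ref{lem:split}, $k$ preserves $P_F=F\times F^\perp$ and splits as a product of $\phi(k)\in\Isom(F)$ and the factor isometry $\rho(k)$ of $F^\perp$, so that $k(P_F)=P_F$ and $\Phi(k)=\partial_T\phi(k)$. Writing $F=F\times\{y_0\}$ inside $P_F$, for a witnessing subflat $F_0$ of a singular sphere $S$ the image $k(F_0)=\phi(k)(F_0)\times\{\rho(k)(y_0)\}$ is a subflat of $kF$ parallel to $\phi(k)(F_0)\subset F$, and $\partial_T(\phi(k)(F_0))=\Phi(k)(S)$. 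Since parallel flats share a parallel set I obtain
\[
P_{\phi(k)(F_0)}=P_{k(F_0)}=k\bigl(P_{F_0}\bigr),
\]
where the last equality uses that $k$ is an isometry of $X$.

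The step I expect to be the crux is turning this identity into preservation of singularity. Because $k$ is an isometry of $X$ with $k(P_F)=P_F$, the set $P_{F_0}$ lies in a bounded neighborhood of $P_F$ exactly when $k(P_{F_0})=P_{\phi(k)(F_0)}$ does; hence $\Phi(k)(S)$ is singular precisely when $S$ is. Running the same argument for $\inv{k}$ shows that $\Phi(k)$ maps the collection of singular subspheres bijectively onto itself, i.e.\ it permutes them. Consequently $\Phi(K)$ is finite, and Proposition~\ref{prop:normalize} gives that $K$ normalizes a finite index subgroup of $H$. The only subtlety I anticipate is the bookkeeping with the product structure of $P_F$, namely verifying that $k(F_0)$ and $\phi(k)(F_0)$ are genuinely parallel and therefore carry a common parallel set, together with the harmless but necessary observation that the singularity condition is insensitive to the choice of witnessing subflat; once these are in place the rigidity hypothesis immediately bounds $\Phi(K)$. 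This recovers Proposition~\ref{prop:symmetricspace} as the special case in which the singular subspheres are exactly the faces of the spherical building structure on $\partial_T F$.
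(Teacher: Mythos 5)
Your proposal is correct and follows exactly the route the paper intends: the paper gives no separate proof of this proposition, stating it as a ``straightforward'' generalization of Proposition~\ref{prop:symmetricspace}, whose proof pattern (Lemma~\ref{lem:split} plus Definition~\ref{def:honolomy}, finiteness of $\Phi(K)$, then Proposition~\ref{prop:normalize}) is precisely what you execute. Your transport computation $P_{\phi(k)(F_0)}=P_{k(F_0)}=k(P_{F_0})$, together with the observation that singularity is independent of the witnessing subflat, correctly fills in the step the paper leaves implicit, namely that each $\Phi(k)$ permutes the singular subspheres so that the rigidity hypothesis bounds $\Phi(K)$.
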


\section{Bredon cohomological dimension for virtually abelian stabilizers}\label{sec:bredon}
Let $G$ be a group and let $\mathcal{F}$ be a \emph{family} of subgroups of $G$, i.e.,\ a collection of subgroups which is closed under taking subgroups and conjugation.  Let $\mathrm{cd}_{\mathcal{F}}G$ denote the \emph{Bredon cohomological dimension of $G$ for the family $\mathcal{F}$}. For definition and properties of Bredon cohomological dimension we refer the reader to \cite{lucksurv}. Let us mention that a closely related invariant is the \emph{Bredon geometric dimension} $\mathrm{gd}_{\mathcal{F}}G$ which is the lowest dimension of the universal $G$--CW--complex with stabilizers in~$\mathcal{F}$. These two invariants are related by $\mathrm{cd}_{\mathcal{F}}G \le \mathrm{gd}_{\mathcal{F}}G \le \mathrm{max}\{3, \mathrm{cd}_{\mathcal{F}}G\}$.

For any integer $r \geq 0$, let $\mathcal{F}_r$ denote the family of all subgroups of $G$ which are finitely generated virtually abelian of rank at most $r$. Thus $\mathcal{F}_0$ consists of all finite subgroups of $G$ and $\mathcal{F}_1$ consists of all virtually cyclic subgroups of $G$.
In \cite{cat0vab} there is presented a method for bounding $\mathrm{cd}_{\mathcal{F}_r}G$ for $\czero$ groups, which depends on $\conc$. 

\begin{thm}{\cite[Theorem~1.1]{cat0vab}}\label{thm:cat0vab} Let $G$ be a group acting properly by semisimple isometries on a complete proper $\czero$ space of topological dimension $n$. Suppose additionally that $G$ satisfies $\conc$. Then for any $0 \leqslant r \leqslant n$ we have $\mathrm{cd}_{\mathcal{F}_r}G \leq n+r+1.$
\end{thm}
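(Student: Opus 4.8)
The plan is to prove the estimate by induction on the rank $r$, using the Lück--Weiermann construction \cite{MR2900176} to pass from the family $\mathcal{F}_{r-1}$ to $\mathcal{F}_r$. For the base case $r=0$ recall that $\mathcal{F}_0$ is the family of finite subgroups; since $G$ acts properly on the contractible space $X$ of topological dimension $n$, one has $\mathrm{cd}_{\mathcal{F}_0}G\le n\le n+1$, so the statement holds at level $0$. For the inductive step I would feed into the Lück--Weiermann machine the pair $\mathcal{F}_{r-1}\subseteq\mathcal{F}_r$ together with the commensurability relation on rank-$r$ finitely generated virtually abelian subgroups. In that setup the self-commensurator $N_G[H]$ of a representative $H$ is precisely $\comm{G}{H}$, and the associated family $\mathcal{F}_r[H]$ consists of the subgroups of $\comm{G}{H}$ that either lie in $\mathcal{F}_{r-1}$ or are commensurable to $H$.

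The associated pushout then yields the dimension estimate
\[\mathrm{cd}_{\mathcal{F}_r}G \le \max\Big\{\mathrm{cd}_{\mathcal{F}_{r-1}}G,\ \sup_{[H]}\big(\mathrm{cd}_{\mathcal{F}_{r-1}\cap\comm{G}{H}}(\comm{G}{H})+1\big),\ \sup_{[H]}\mathrm{cd}_{\mathcal{F}_r[H]}(\comm{G}{H})\Big\},\]
the supremum ranging over representatives $H$ of the commensurability classes. The first term is at most $n+(r-1)+1=n+r$ by the inductive hypothesis applied to $G\acts X$, so it is harmless.

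For the second term I would exploit the geometry of the parallel set. By Lemma~\ref{lem:split} the group $\comm{G}{H}$ preserves the closed convex subset $P_F=F\times F^\perp\subseteq X$, hence acts on it properly; moreover the nearest-point projection $X\to P_F$ is $1$--Lipschitz and $\comm{G}{H}$--equivariant, so it carries the minimal set of any $\alpha\in\comm{G}{H}$ into $P_F$ without increasing displacement, and therefore $\comm{G}{H}$ acts on $P_F$ by semisimple isometries. Since $\conc$ passes to subgroups --- for $L\le K\le G$ one has $\comm{K}{L}=\comm{G}{L}\cap K\le\comm{G}{L}$ --- the group $\comm{G}{H}$ itself satisfies $\conc$ and acts properly and semisimply on the complete proper $\czero$ space $P_F$, whose topological dimension is at most $n$. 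Applying the inductive hypothesis to $\comm{G}{H}\acts P_F$ at rank $r-1$ gives $\mathrm{cd}_{\mathcal{F}_{r-1}\cap\comm{G}{H}}(\comm{G}{H})=\mathrm{cd}_{\mathcal{F}_{r-1}}(\comm{G}{H})\le\dim P_F+(r-1)+1\le n+r$, so the second term is bounded by $n+r+1$, exactly the target.

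The essential difficulty is the third term, $\mathrm{cd}_{\mathcal{F}_r[H]}(\comm{G}{H})$, which demands an actual construction of a model realizing precisely the family $\mathcal{F}_r[H]$ in dimension $\le n+r+1$. Writing $\rho\colon\comm{G}{H}\to\Isom(F^\perp)$ for the factor action of Definition~\ref{def:honolomy}, the kernel $\ker\rho$ acts trivially on $F^\perp$ and properly cocompactly on $F\cong\E^r$, hence is a crystallographic group commensurable to $H$, and one obtains an extension $1\to\ker\rho\to\comm{G}{H}\xrightarrow{\rho}Q\to1$ in which (by an argument as in Lemma~\ref{lem:discreteorb}) $Q$ acts properly on $F^\perp$, a complete $\czero$ space of dimension $\le n-r$. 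A subgroup of $\comm{G}{H}$ lies in $\mathcal{F}_r[H]$ exactly when it is virtually contained in $\ker\rho$ or has rank at most $r-1$, and the obstacle is that neither obvious pullback along $\rho$ realizes this family: pulling back $\underline{E}Q$ sees only the subgroups virtually inside $\ker\rho$, while pulling back $E_{\mathcal{F}_{r-1}}(Q)$ over-counts, admitting ``diagonal'' rank-$r$ subgroups split between $\ker\rho$ and $Q$ that are not commensurable to $H$. The plan is to assemble the correct model by combining the $(\le n-r)$--dimensional geometry of $Q\acts F^\perp$ with the Euclidean factor $F$; it is here that $\conc$ is indispensable, for it guarantees that $\comm{G}{H}$ is an ascending union of normalizers of finite-index subgroups $H_i\le H$, so the classifying space can be built as a controlled colimit of the comparatively simple normalizer models $E_{\mathcal{F}_r[H]}(N_G(H_i))$, each governed by the splitting $\minset{H_i}\cong\E^r\times Y_i$ of Section~\ref{subsec:honolomy}. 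This is precisely the control that the Leary--Minasyan examples destroy, and removing it is the content of the improved Theorem~\ref{introthm:improvedcat0vab}. Once the third term is bounded by $n+r+1$, the three estimates combine to give $\mathrm{cd}_{\mathcal{F}_r}G\le n+r+1$, completing the induction.
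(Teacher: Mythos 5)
Note first that the paper does not reprove this statement: it is imported verbatim from \cite[Theorem~1.1]{cat0vab}, and Section~\ref{sec:bredon} only recalls the structure of that proof in order to isolate where $\conc$ is used — namely a L\"uck--Weiermann induction on $r$ whose sole use of $\conc$ occurs in \cite[Lemma~3.4]{cat0vab}, the bound $\mathrm{cd}_{\mathrm{All}[H]}\comm{G}{H}\le n-r+1$, proved by writing $\comm{G}{H}$ as an ascending union of normalizers $N_G(H_i)$ (this is exactly where $\conc$ enters) and bounding each $\mathrm{cd}_{\mathrm{All}[H]\cap N_G(H_i)}N_G(H_i)$ via the proper action of $N_G(H_i)/H_i$ on $\minset{H_i}\cap F^{\perp}$. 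Your outer scaffolding agrees with this: the induction on $r$ through the L\"uck--Weiermann pushout \cite{MR2900176}, the three-term estimate, the base case, and the first two terms are all essentially correct. In particular your observation that $\conc$ passes to subgroups via $\comm{K}{L}=K\cap\comm{G}{L}$ is right, so the inductive hypothesis does apply to $\comm{G}{H}$ (acting on $X$ itself, or on $P_F$ by your restriction argument), which disposes of the second term.

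The genuine gap is the third term: the bound $\sup_{[H]}\mathrm{cd}_{\mathcal{F}_r[H]}\comm{G}{H}\le n+r+1$ is never proved — you explicitly defer it (``the plan is to assemble\dots'', ``once the third term is bounded\dots''). This is not a routine verification; it is the technical heart of the theorem and the only place where $\conc$ is needed, so without it the induction does not close. Concretely, what is missing is: (a) a lemma bounding the Bredon dimension of the countable ascending union $\comm{G}{H}=\bigcup_i N_G(H_i)$ supplied by $\conc$ in terms of the pieces (this is the source of a $+1$); (b) for each normalizer, an explicit model for the family $\mathcal{F}_r[H]\cap N_G(H_i)$ of controlled dimension, built from the splitting $\minset{H_i}\cong\E^r\times Y_i$, the proper action of $N_G(H_i)/H_i$ on $Y_i$ (of dimension $\le n-r$), and a model for the crystallographic factor; and (c) the verification that the assembled space realizes exactly $\mathcal{F}_r[H]=\mathrm{All}[H]\cup\mathcal{F}_{r-1}$. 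Moreover, two assertions inside your sketched plan are false in general, so the plan cannot be executed as written: $\ker\rho$ need not act cocompactly on $F$ nor be commensurable to $H$, because an element of $H$ may act nontrivially on $F^{\perp}$, fixing only the point $[F]$ — for instance $H=G=\Z^2$ acting on $\E^2\times T$, $T$ a locally finite tree, with one generator translating in $\E^2$ and acting as an infinite-order elliptic automorphism of $T$ fixing a single vertex; consequently your characterization of $\mathcal{F}_r[H]$ via $\ker\rho$ fails. Likewise, in the same example the quotient $Q$ does \emph{not} act properly on $F^{\perp}$ (it fixes a vertex of $T$ with infinite stabilizer); the failure of properness of this factor action is precisely the difficulty that the paper's Section~\ref{sec:bredon} circumvents by restricting to the core $\core\subset F^{\perp}$ (Lemma~\ref{lem:core}, Lemma~\ref{lem:discreteorb}) and invoking the Degrijse--Petrosyan theorem \cite{DePe}.
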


However, by analyzing the action of $\comm{G}{H}$ on the core $C \subset F^{\perp}$, we are able to remove $\conc$ from the assumptions of the above theorem. We need the following definition.
\begin{defn}
Given a subgroup $H \in \mathcal{F}_r$, let $\mathrm{All}[H]$ denote the family of subgroups of $\comm{G}{H}$ which consists of all subgroups $A$ such that $A \cap H$ is of finite index in $A$. \end{defn}

The only place where $\conc$ is used in the proof of \cite[Theorem~1.1]{cat0vab} is the proof of {\cite[Lemma~3.4]{cat0vab}}, where it is shown that 
\[\mathrm{cd}_{\mathrm{All}[H]} \comm{G}{H} \leq n-r+1.\]

This is obtained in two steps. First, using $\conc$ one writes $\comm{G}{H}$ as the limit $\mathrm{lim}_i N_G(H_i)$ of normalizers of subgroups $H_i$ which are commensurable with $H$. Then one bounds $\mathrm{cd}_{\mathrm{All}[H] \cap N_G(H_i)} N_G(H_i)$ for every $i$ using the proper action of $N_G(H_i)/H_i$ on a $\czero$ space $\minset{H_i} \cap F^{\perp}$.


\begin{prop}\label{prop:bredondimall}Let $G$ be a group acting properly by semisimple isometries on a proper $\czero$ space $X$ of topological dimension $n$. Let $H \in \mathcal{F}_r$ be subgroup of $G$. Then \[\mathrm{cd}_{\mathrm{All}[H]} \comm{G}{H} \leq n-r.\]
\end{prop}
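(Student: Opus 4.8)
The plan is to use the action of $\comm{G}{H}$ on the core $\core\subseteq F^\perp$ as a model for $E_{\mathrm{All}[H]}\comm{G}{H}$, and to read the dimension bound off from $\dim\core\le n-r$. First I would reduce to the case that $H$ is free abelian of rank $r$ (Lemma~\ref{lem:comm invariant}), fix an $H$--invariant flat $F\cong\mathbb E^r$ on which $H$ acts cocompactly, and pass to the parallel set $P_F=F\times F^\perp$ together with the factor action $\rho\colon\comm{G}{H}\acts F^\perp$ and the core $\core\subseteq F^\perp$ of Section~\ref{subsec:core}. Since $P_F=F\times F^\perp$ embeds isometrically in the $n$--dimensional space $X$ and $F\cong\mathbb E^r$, the standard behaviour of topological dimension under products with $\mathbb E^r$ gives that the dimension of $F^\perp$, and hence of the convex subset $\core$, is at most $n-r$.

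Next I would verify that $\comm{G}{H}\acts\core$ has exactly the features required of a classifying space for $\mathrm{All}[H]$. By definition of the core, the $\rho$--stabilizer of every point of $\core$ is commensurable with $H$, hence lies in $\mathrm{All}[H]$; conversely, if $K\le\comm{G}{H}$ fixes a point $x\in\core$ then $K\le\Stab_\rho(x)$, which is commensurable with $H$, so $K\in\mathrm{All}[H]$. Thus $\core^{K}=\emptyset$ whenever $K\notin\mathrm{All}[H]$. For $K\in\mathrm{All}[H]$ the fixed set $\core^{K}$ is an intersection of fixed sets of individual isometries of the $\czero$ space, hence convex, and therefore contractible once it is shown to be non-empty. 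Non-emptiness I would extract from the splitting $\minset{H}\cong\mathbb E^r\times Y$: a finite index normal subgroup $K_0\trianglelefteq K$ with $K_0\le H$ fixes the copy of $Y$ inside $\core$ pointwise, and the finite group $K/K_0$ then acts on the convex set $\core^{K_0}$, where a circumcenter argument produces a point fixed by all of $K$.

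Finally I would promote $\core$ to an honest $\comm{G}{H}$--CW model. The key input here is that, $X$ being proper, the orbits of $\comm{G}{H}\acts\core$ are discrete by Lemma~\ref{lem:discreteorb}; combined with the stabilizer and fixed--point analysis above this lets one equip $\core$ (or an equivariant thickening of it) with a $\comm{G}{H}$--CW structure of dimension at most $n-r$ whose fixed sets are contractible for $K\in\mathrm{All}[H]$ and empty otherwise, that is, a model for $E_{\mathrm{All}[H]}\comm{G}{H}$. The desired bound
\[
\mathrm{cd}_{\mathrm{All}[H]}\comm{G}{H}\le\mathrm{gd}_{\mathrm{All}[H]}\comm{G}{H}\le\dim\core\le n-r
\]
then follows.

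The main obstacle is precisely this last promotion, because $\core$ is in general neither closed nor complete in $F^\perp$ (as noted after Lemma~\ref{lem:core}): completeness is what one would ordinarily invoke both for the circumcenter argument giving non-empty fixed sets and for producing an equivariant triangulation. I expect to resolve this using the discreteness of orbits from Lemma~\ref{lem:discreteorb} (valid here since $X$ is proper) to build the cell structure directly, controlling fixed points through the invariant $Y$--factor rather than through completeness; alternatively one may bypass a full homotopy model and construct only a Bredon--acyclic complex of dimension $n-r$, which still bounds $\mathrm{cd}_{\mathrm{All}[H]}\comm{G}{H}$. The gain of one dimension over \cite[Lemma~3.4]{cat0vab}, together with the removal of $\conc$, comes exactly from working directly on $\core$ of dimension $n-r$ so that the stabilizers in $\mathrm{All}[H]$ absorb $H$, instead of approximating $\comm{G}{H}$ by normalizers $N_G(H_i)$ and reassembling via $\conc$, which cost an extra dimension.
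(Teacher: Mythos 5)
There is a genuine gap, and it sits exactly where you flag it yourself: the ``promotion'' of $\core$ to a $\comm{G}{H}$--CW model (or to a Bredon--acyclic complex) of dimension $n-r$. Nothing in your argument, nor in the paper's toolkit, produces an equivariant CW structure on $\core$: the core is a convex but in general non-closed, non-complete, non-locally-compact subset of $F^\perp$, and there is no general procedure for triangulating such a space equivariantly in its topological dimension --- this is a well-known difficulty even for complete proper $\czero$ spaces. Discreteness of orbits (Lemma~\ref{lem:discreteorb}) does not by itself yield a cell structure, and the inequality $\mathrm{gd}_{\mathrm{All}[H]}\comm{G}{H}\le\dim\core$ that your final display relies on is only meaningful once such a model exists, so the argument is circular at its key step. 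The paper avoids this entirely: after the same preparation (core, $\dim\core\le n-r$, discrete orbits, identification of the point stabilizers), it invokes the theorem of Degrijse--Petrosyan \cite[Corollary~1]{DePe}, which bounds $\mathrm{cd}_{\mathcal{F}}$ \emph{directly} by the topological dimension for any isometric action with discrete orbits on a separable $\czero$ space, where $\mathcal{F}$ is the smallest family containing the point stabilizers --- no CW structure, no completeness, no properness of $\core$ required. That black box is precisely what makes the non-completeness of $\core$ harmless, and it is the missing ingredient in your write-up; with it, your verification that the point stabilizers generate exactly $\mathrm{All}[H]$, together with separability of $\core\subset X$ and $\dim\core\le n-r$, finishes the proof.

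One smaller point: your worry about the circumcenter argument can be resolved without any new idea, and the paper's proof implicitly does so. For $A\in\mathrm{All}[H]$, the finite-index subgroup $A\cap H$ fixes points of $\core$ (it acts trivially on the $Y$--factor of $\minset{H}\cong\mathbb E^r\times Y$), so $A$ has a finite orbit in $\core$. This orbit lies in $F^\perp$, which \emph{is} complete, being a closed convex subset of the proper (hence complete) space $X$; its circumcenter therefore exists in $F^\perp$ and is fixed by $A$. That circumcenter automatically lies in $\core$: its $\rho$--stabilizer contains $A\cap H$, which is commensurable with $H$ and acts cocompactly on the corresponding flat, while the full stabilizer acts properly on it, so the stabilizer is commensurable with $H$. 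So no fixed-point argument inside the incomplete space $\core$ is ever needed.
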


The proposition is an easy consequence of the following theorem of Degrijse-Petrosyan.

\begin{thm}{\cite[Corollary~1]{DePe}} Let $G$ be a group acting by isometries on a separable $\czero$ space of topological dimension $n$ and suppose that the $G$--orbit of every point $x \in X$ is discrete. Let $\mathcal{F}$ be the smallest family of subgroups of $G$ containing the point stabilizers $G_x$ for every $x \in X$. Then we have
  \[\mathrm{cd}_{\mathcal{F}}G  \leq n.\]
\end{thm}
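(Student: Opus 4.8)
The plan is to bound $\mathrm{cd}_{\mathcal{F}} G$ by exhibiting an $n$--dimensional $G$--CW--model for the classifying space $E_{\mathcal{F}} G$; since $\mathrm{cd}_{\mathcal{F}} G \le \mathrm{gd}_{\mathcal{F}} G$ and the geometric dimension is at most the dimension of any model, such a model immediately gives the bound. The key observation is that $X$ itself is already a model \emph{topologically}: because $X$ is a $\czero$ space, every metric ball is convex, and the fixed--point set $X^H$ of any subgroup $H$ is convex (an isometry fixing two points fixes the geodesic between them), hence contractible whenever nonempty. Moreover $X^H \neq \emptyset$ precisely when $H$ fixes a point, which happens exactly when $H \in \mathcal{F}$: the smallest family containing the point stabilizers $G_x$ is closed under conjugation ($gG_xg^{-1}=G_{gx}$) and under subgroups, so it coincides with the collection of subgroups fixing some point. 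Thus the entire content of the theorem is to replace $X$ by a $G$--homotopy equivalent $G$--CW--complex of dimension $n$ whose cell stabilizers lie in $\mathcal{F}$.

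To carry this out I would form the equivariant nerve of a well--chosen $G$--invariant open cover $\mathcal{U}$ of $X$ by convex balls. Covers by balls are automatically \emph{good}: in a $\czero$ space finite intersections of balls are convex, hence empty or contractible, so an equivariant nerve theorem applies and yields $N(\mathcal{U})^H \simeq X^H$ for every $H$, making $N(\mathcal{U})$ a model for $E_{\mathcal{F}}G$. The discreteness of orbits is exactly what controls the stabilizers: around each point $x$ one can pick a radius $\epsilon_x$ so small that the $G$--translates of $B(x,\epsilon_x)$ are pairwise equal or disjoint, forcing the stabilizer of $B(x,\epsilon_x)$ to equal $G_x$; finite intersections of such balls then have stabilizers that are subgroups of point stabilizers, hence lie in $\mathcal{F}$. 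Separability is used to guarantee paracompactness and to run classical covering--dimension theory, which behaves well for separable metric spaces. Even if one wishes only the cohomological bound, the cellular chain complex of $N(\mathcal{U})$ provides a length--$n$ free Bredon resolution of the constant functor, which suffices.

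The hard part will be achieving $G$--invariance and the dimension bound simultaneously. I need the nerve to have dimension at most $n$, i.e.\ the cover to have multiplicity at most $n+1$, while keeping it $G$--invariant, locally finite, and with the stabilizer control above. Covering--dimension theory ($\dim X = n$) refines any cover to multiplicity $\le n+1$, but these refinements are not produced equivariantly, and naively translating a low--multiplicity cover by $G$ destroys the multiplicity bound. Reconciling these is the technical heart of the argument: the dimension--theoretic refinement must be performed $G$--equivariantly, exploiting discreteness of orbits so that, locally around each orbit, the cover is modelled on a non--equivariant cover of a small neighborhood with $G$ merely permuting disjoint translates. Once a $G$--invariant, locally finite, good cover of multiplicity $\le n+1$ with stabilizers in $\mathcal{F}$ is in hand, its nerve is the desired $n$--dimensional model and the inequality $\mathrm{cd}_{\mathcal{F}} G \le n$ follows.
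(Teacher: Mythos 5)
First, a point of comparison: the paper does not prove this statement at all --- it is quoted verbatim from Degrijse--Petrosyan \cite[Corollary~1]{DePe} and used as a black box in the proof of Proposition~\ref{prop:bredondimall}. So your proposal must be measured against the argument in \cite{DePe}, and that argument is essentially algebraic: one compares Bredon cohomology with a \v{C}ech-type equivariant cohomology of $X$ (using contractibility of the convex fixed-point sets) and then kills everything above degree $n$ by dimension theory for separable metric spaces. Notably, it does \emph{not} construct an $n$--dimensional $G$--CW model; your plan would prove the stronger statement $\mathrm{gd}_{\mathcal{F}}G \le n$, which is more than the theorem claims and is exactly why your route runs into trouble.

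The step you yourself flag as ``the technical heart'' --- refining a $G$--invariant cover to multiplicity $\le n+1$ equivariantly --- is a genuine gap, and the mechanism you propose for filling it does not work in this generality. Discreteness of each individual orbit gives no local picture in which ``$G$ merely permutes disjoint translates'': the action is not assumed proper, stabilizers $G_x$ are typically infinite (in the paper's application they are commensurable with the infinite group $H$), and distinct orbits may accumulate on one another, so there is no $G$--invariant neighborhood of an orbit on which the quotient is modelled by a small non-equivariant patch. Equivariant covering dimension can genuinely exceed topological dimension, which is precisely why \cite{DePe} avoids the nerve construction. There are further unaddressed problems even granting such a cover: $X$ is not assumed complete (the paper explicitly warns that the core $\core \subset F^{\perp}$ is in general neither complete nor closed), so Bruhat--Tits circumcenters are unavailable; consequently the \emph{setwise} stabilizer of a finite intersection of balls --- which is the simplex stabilizer of the (subdivided) nerve --- need not fix any point of $X$, and since $\mathcal{F}$ is only closed under subgroups and conjugation (not finite overgroups), such stabilizers need not lie in $\mathcal{F}$. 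For the same reason your appeal to an equivariant nerve theorem needs more than contractibility of intersections: one must control $N(\mathcal{U})^K$ against $X^K$ for all $K$, i.e.\ show that $K$--invariant nonempty intersections meet $X^K$ contractibly, and without completeness this fails to be automatic. In short, the convexity observations and the stabilizer control for single balls are correct, but the core of the theorem remains unproved in your sketch, and the known proof proceeds by a different, cohomological route.
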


\begin{proof}[Proof of Proposition~\ref{prop:bredondimall}] Consider the action of $\comm{G}{H}$ on the core $\core \subset F^{\perp}$ given by Lemma~\ref{lem:core}. Clearly $\core$ is a $\czero$ space, since it is a convex subset of $X$. We have that $\core$ is separable, since it is a subset of a proper, and hence separable, metric space $X$. Let $\mathrm{dim}$ denote the topological dimension. Notice that \[\mathrm{dim} (F \times F^{\perp}) \leq \mathrm{dim}(X) \leq n.\] Since $\mathrm{dim}(F) =r$ and $\core \subset F^{\perp}$ we obtain that $\mathrm{dim}(\core) \leq n-r$.  By Lemma~\ref{lem:discreteorb} the action has discrete orbits.
 
It remains to check that $\mathrm{All}[H]$ is the smallest family of subgroups of $\comm{G}{H}$ which contains point stabilizers. By definition of $\core$ every point stabilizer is commensurable with $H$ and thus belongs to $\mathrm{All}[H]$. On the other hand, any subgroup $A \in \mathrm{All}[H]$ has a fixed point. To see this, notice that the intersection $A\cap H$ has a fixed point, and since $[A \colon A\cap H]$ is finite, the subgroup $A$ has a finite orbit and thus a fixed point as well.
\end{proof}

Combining proof of \cite[Theorem~1.1]{cat0vab} with Proposition~\ref{prop:bredondimall} we obtain the following.

\begin{thm}\label{thm:improvedcat0vab} Let $G$ be a group acting properly by semisimple isometries on a complete proper $\czero$ space of topological dimension $n$. Then for any $0 \leqslant r \leqslant n$ we have $\mathrm{cd}_{\mathcal{F}_r}G \leq n+r+1.$
\end{thm}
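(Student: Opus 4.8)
The plan is to re-run the proof of Theorem~\ref{thm:cat0vab} given in \cite{cat0vab} essentially verbatim, replacing the single step that invokes $\conc$ by the unconditional estimate of Proposition~\ref{prop:bredondimall}. First I would recall the structure of that argument. The bound on $\mathrm{cd}_{\mathcal{F}_r}G$ is produced by an inductive L\"uck--Weiermann push-out: a model for the family $\mathcal{F}_r$ is assembled from a model for $\mathcal{F}_{r-1}$ together with, for each commensurability class of a rank--$r$ subgroup $H$, a model for $\comm{G}{H}$ relative to the family $\mathrm{All}[H]$. The resulting dimension estimate is a maximum over several terms, and the term governing the overall bound is controlled by $\mathrm{cd}_{\mathrm{All}[H]}\comm{G}{H}$.

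Second, I would isolate precisely where $\conc$ enters. As the discussion preceding Proposition~\ref{prop:bredondimall} records, $\conc$ is used \emph{only} in the proof of \cite[Lemma~3.4]{cat0vab}: it lets one write $\comm{G}{H}$ as an ascending union of normalizers $N_G(H_i)$ of commensurable subgroups and thereby obtain $\mathrm{cd}_{\mathrm{All}[H]}\comm{G}{H}\le n-r+1$. Every other ingredient in the proof of Theorem~\ref{thm:cat0vab} — the properness of the relevant factor actions, the dimension count for the $\czero$ subspaces involved, and the assembly of the push-out — is independent of $\conc$.

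Third, I would substitute. Proposition~\ref{prop:bredondimall} supplies the bound $\mathrm{cd}_{\mathrm{All}[H]}\comm{G}{H}\le n-r$ with no hypothesis on $\conc$ (indeed slightly sharper than what it replaces), obtained from the discreteness of the $\comm{G}{H}$--orbits on the core $\core\subset F^{\perp}$ (Lemma~\ref{lem:core} and Lemma~\ref{lem:discreteorb}) together with the Degrijse--Petrosyan theorem \cite[Corollary~1]{DePe}. Feeding this estimate into the push-out bound in place of the $\conc$--dependent one yields $\mathrm{cd}_{\mathcal{F}_r}G\le n+r+1$ exactly as in \cite{cat0vab}.

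The hard part here is not a new computation but the bookkeeping: one must confirm that $\conc$ really is used nowhere else in \cite{cat0vab}, so that this single substitution suffices. Since the new input is at least as strong as the estimate it replaces, no constant in the induction degrades and the final inequality is unchanged.
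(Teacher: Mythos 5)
Your proposal is correct and follows the paper's own proof exactly: the paper likewise observes that $\conc$ enters the proof of \cite[Theorem~1.1]{cat0vab} only through \cite[Lemma~3.4]{cat0vab}, and replaces that step by Proposition~\ref{prop:bredondimall} (via Lemmas~\ref{lem:core} and~\ref{lem:discreteorb} and the Degrijse--Petrosyan theorem), noting as you do that the sharper bound $n-r$ does not change the final inequality.
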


\begin{remark}In Proposition \ref{prop:bredondimall} we obtain a better dimension bound when compared with \cite[Lemma~3.4]{cat0vab}. However, this does not improve the bound for $\mathrm{cd}_{\mathcal{F}_r}G$ in Theorem~\ref{thm:improvedcat0vab}.
Nonetheless, it does simplify the construction, as for any commensurability class $[H]$ one can use a single $\czero$ space $\core \subset F^{\perp}$ rather than a countable collection of spaces $\minset{H_i} \cap F^{\perp}$.
\end{remark}

\section{Relation with Leary-Minasyan groups}\label{sec:lmgroups}
The following construction is due to Leary and Minasyan \cite{LM}.
\begin{defn}
	\label{def:LM}
	Let $T$ be a flat torus of dimension $n$. We identify $H=\pione{T}$ with a subgroup of $\Isom(\mathbb E^n)$. Let $\alpha$ be an element in the commensurator of $H$ in $\Isom(\mathbb E^n)$ such that the induced action of $\alpha$ on the Tits boundary $\partial_T\mathbb E^n$ has infinite order. Let $H_1$ and $H_2$ be finite index subgroups of $H$ such that $\alpha H_1\alpha^{-1}=H_2$. Let $T_1$ and $T_2$ be the coverings of $T$ corresponding to $H_1$ and $H_2$ respectively. Then $\alpha$ descents to an isometry $\alpha':T_1\to T_2$.
	
	Now we define \[X=(T_1\times[0,1])\sqcup T/\sim\] where the relation $\sim$ is defined as follows. We identify points in $T_1\times\{0\}$ with points in $T$ via the covering map $T_1\to T$, and identify points in $T_1\times\{1\}$ with points in $T$ via $T_1\to T_2\to T$ where the first map is $\alpha'$ and the second map is the covering map. We endow $T_1\times[0,1]$ with the product metric. Since the identification maps are local isometric, $X$ has a well-defined quotient metric, and one readily verifies that this metric is locally $\czero$.

	Then $\pione{X}$ is defined to be a \emph{Leary-Minasyan group} (or LM group). Note that $\pione{X}$ is an HNN--extension of form \[\{H,t\mid tH_1t^{-1}=H_2\}\] where the isomorphism between $H_1$ and $H_2$ is induced by $\alpha$. Moreover, $\pione{X}$ is a $\czero$ group acting geometrically on $\widetilde X\cong \mathbb E^n\times T$ where $T$ is a locally finite tree.
\end{defn}

It follows from Proposition~\ref{prop:normalize} that $\conc$ does not hold for LM groups. This leads to the following result of \cite{LM}.

\begin{thm}
	There exists a $\czero$ group which does not satisfy $\conc$.
\end{thm}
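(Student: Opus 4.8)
The plan is to produce a concrete LM group as in Definition~\ref{def:LM} and to certify the failure of $\conc$ by means of Proposition~\ref{prop:normalize}. First I would fix the data of the construction: take $n=2$, let $H=\pione{T}\cong\Z^2$ be the translation lattice of a flat $2$--torus, and choose $\alpha=(v,A)\in\Isom(\mathbb E^2)$ whose rotational part is the rational orthogonal matrix $A=\tfrac15\bigl(\begin{smallmatrix}3&-4\\4&3\end{smallmatrix}\bigr)$. A direct computation shows that conjugation by $\alpha$ sends the translation by $w$ to the translation by $Aw$, so $\alpha H\alpha^{-1}$ is the lattice $A\Z^2$; since $A\in GL(2,\mathbb Q)\cap O(2)$ this is commensurable with $\Z^2$, whence $\alpha$ lies in the commensurator of $H$ in $\Isom(\mathbb E^2)$. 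Moreover $A$ is a rotation whose angle is an irrational multiple of $\pi$, so it has infinite order on $\partial_T\mathbb E^2$, exactly as Definition~\ref{def:LM} requires. Feeding this into the construction yields an LM group $G=\pione{X}$ acting geometrically, and in particular properly, on $\widetilde X\cong\mathbb E^2\times T$.

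Next I would locate the relevant subgroup and flat. The fiber subgroup $H\le G$ is free abelian of rank $2$ and acts on $\widetilde X$ by translations of the $\mathbb E^2$--factor while fixing a vertex of the tree $T$; in particular $H$ is semisimple and acts cocompactly on the flat $F=\mathbb E^2\times\{*\}$, so Proposition~\ref{prop:normalize} applies to this $F$. The stable letter $t$ satisfies $tH_1t^{-1}=H_2$ with $H_1,H_2$ of finite index in $H$, which gives $H_2\le tHt^{-1}\cap H$; hence $tHt^{-1}$ and $H$ are commensurable and $t\in\comm{G}{H}$. Thus $K=\angled{t}$ is a finitely generated subgroup of $\comm{G}{H}$, and it is the candidate witness for the failure of $\conc$.

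The crux is to evaluate the homomorphism $\Phi$ of Definition~\ref{def:honolomy} on $t$. Recall $\Phi$ records the induced action on $O(\dim F)$ coming from the $F$--factor of the splitting of the parallel set $P_F$. I would trace the identification $\widetilde X\cong\mathbb E^2\times T$ in Definition~\ref{def:LM} to verify that $t$ acts on the $\mathbb E^2$--factor as the isometry $\alpha$ (up to its translation part, which is irrelevant on the boundary) and shifts along the tree. Consequently $\Phi(t)$ equals the boundary action of $\alpha$, namely $A\in O(2)$, which has infinite order; hence $\Phi(K)$ is infinite. By Proposition~\ref{prop:normalize} the subgroup $K=\angled{t}$ does not normalize any finite index subgroup of $H$, so $H$ fails $\conc$, and therefore so does $G$.

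I expect the main obstacle to be the boundary computation in the last paragraph: one must check that the $F$--factor of the $t$--action on $P_F$ is genuinely conjugate to the action of $\alpha$ on $\mathbb E^2$, and is not perturbed by the passage to the parallel set or by the tree coordinate. Once the splitting is set up so that $t$ acts as $(\alpha,\tau)$ for a hyperbolic tree translation $\tau$, this becomes transparent, but it is the single point where the specific geometry of the LM construction, rather than the general machinery of Section~\ref{sec:honolomy}, is actually used.
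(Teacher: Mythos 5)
Your proof is correct and follows essentially the same route as the paper: the paper also deduces the theorem by applying Proposition~\ref{prop:normalize} to the stable letter $t$ of an LM group, whose image under $\Phi$ is the infinite-order boundary action of $\alpha$. The only addition on your side is the welcome concrete instantiation $A=\tfrac15\bigl(\begin{smallmatrix}3&-4\\4&3\end{smallmatrix}\bigr)$, which verifies that the data required in Definition~\ref{def:LM} actually exists.
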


\begin{cor}
	\label{cor:LM cor}
	There exists a group $G$ acting geometrically on a $\czero$ piecewise Euclidean complex such that $G$ does not satisfy $\conc$. There exists a closed non-positively curved manifold such that its fundamental group does not satisfy $\conc$.
\end{cor}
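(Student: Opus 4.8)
The plan is to deduce both assertions from the preceding theorem, which furnishes a Leary--Minasyan group $G=\pione{X}$ that fails $\conc$. For the first assertion, recall from Definition~\ref{def:LM} that $X$ is obtained by gluing the flat manifolds $T_1\times[0,1]$ and $T$ along covering maps. Choosing compatible cellulations of the tori, so that the gluing maps are cellular, realizes $X$ as a compact, locally $\czero$ piecewise Euclidean complex. Its universal cover $\widetilde X\cong \mathbb E^n\times T$ (with $T$ a locally finite tree) is then a $\czero$ piecewise Euclidean complex on which $G$ acts geometrically by deck transformations, and $G$ does not satisfy $\conc$. This is precisely the first statement.

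For the second statement I would first record a permanence property: if $G\le G'$ and $G$ fails $\conc$, then so does $G'$. Indeed, the failure of $\conc$ for $G$ is witnessed by a finitely generated virtually abelian subgroup $H\le G$ and a finitely generated $K\le\comm{G}{H}$ that normalizes no finite index subgroup of $H$; since $\comm{G}{H}\subseteq\comm{G'}{H}$, and the condition ``$K$ normalizes no finite index subgroup of $H$'' refers only to the subgroups $K$ and $H$ and not to the ambient group, the same pair $(H,K)$ witnesses the failure of $\conc$ for $G'$. Hence it suffices to embed the Leary--Minasyan group $G$ into the fundamental group of a closed non-positively curved manifold.

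To produce such a manifold I would apply Davis's reflection group trick. Since $\widetilde X$ is $\czero$, hence contractible, $X$ is a finite aspherical locally $\czero$ complex with $\pione{X}=G$. Embedding $X$ in a Euclidean space and taking a regular neighbourhood yields a compact manifold with boundary $L$ homotopy equivalent to $X$, so $\pione{L}\cong G$. Equipping $L$ with a non-positively curved metric whose boundary is totally geodesic and ``right-angled'', and choosing a flag triangulation of $\partial L$, the reflection group trick produces a closed aspherical manifold $M$ carrying a locally $\czero$ (piecewise Euclidean) metric, together with a retraction $\pione M\to\pione L\cong G$. In particular $G$ embeds in $\pione M$, so by the permanence property $\pione M$ fails $\conc$, which is the second assertion.

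The main obstacle is guaranteeing that the output $M$ is genuinely non-positively curved: one must arrange an $\czero$ metric on the thickening $L$ with totally geodesic, right-angled, flag-triangulated boundary, so that every vertex link in the reflected manifold is $\mathrm{CAT}(1)$; this is exactly where the curvature hypotheses in Davis's construction are used. I would also emphasize why $M$ must be taken to be a piecewise Euclidean rather than a smooth Riemannian manifold: if $M$ had a smooth metric of non-positive sectional curvature, its universal cover would be a Hadamard manifold, the deck action would be geometric and hence semisimple, and Theorem~\ref{introthm:hadamard} would force $\conc$ to hold for $\pione M$. Thus $M$ is necessarily a non-Riemannian non-positively curved manifold, in agreement with the remark following Theorem~\ref{introthm:hadamard}.
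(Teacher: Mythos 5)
Both halves of your argument break down at precisely the points where the paper has to do real work.

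For the first statement, the step ``choosing compatible cellulations of the tori, so that the gluing maps are cellular'' is not merely unjustified; in the natural (product) reading it is impossible. Suppose the cylinder $T_1\times[0,1]$ carried a cellulation restricting to one and the same cellulation $\mathcal C$ of $T_1$ at both ends, with each gluing map sending cells of $\mathcal C$ isometrically onto cells of a cellulation $\mathcal D$ of $T$. Lift everything to $\mathbb E^n$: cellularity of the covering $T_1\to T$ forces the lift of $\mathcal C$ to coincide with the $H$--invariant lift $\widetilde{\mathcal D}$ of $\mathcal D$, and cellularity of $p_2\circ\alpha'$ then forces $\alpha(\widetilde{\mathcal D})=\widetilde{\mathcal D}$. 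So $\widetilde{\mathcal D}$ would be invariant under $\langle H,\alpha\rangle$. Write $\alpha(x)=Ax+b$ with $A\in O(n)$. The translation subgroup of $\langle H,\alpha\rangle$ contains the group $\Lambda$ generated by $L, AL, A^2L,\dots$, where $L$ is the translation lattice of $H$; if $\Lambda$ were discrete it would be a lattice with $A\Lambda=\Lambda$ (same covolume), forcing $A$ to lie in the finite stabilizer of a lattice in $O(n)$ and hence to have finite order --- contradicting the defining property of $\alpha$ in Definition~\ref{def:LM}. A non-discrete group of translations cannot preserve any locally finite cell structure, so the two pullback cellulations at the two ends of the cylinder never agree. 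The genuine content of the first statement is to produce a cellulation of the cylinder that \emph{interpolates} between two incompatible boundary cellulations; this is exactly what the paper's construction (a Voronoi tessellation with respect to an appropriate net in $X$, which is intrinsic to the metric and hence automatically compatible with the gluings) achieves, and what your product-type picture cannot.

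For the second statement, the fatal step is ``equipping $L$ with a non-positively curved metric whose boundary is totally geodesic and right-angled''. A regular neighbourhood of $X$ in $\mathbb R^N$ is merely a compact PL manifold with boundary homotopy equivalent to $X$; there is no reason it admits any locally $\czero$ metric at all, let alone one with a totally geodesic, right-angled mirror structure on its boundary. (If such a metric existed, doubling along the boundary would already produce the desired closed non-positively curved manifold, and no reflection trick would be needed.) The Davis reflection trick applied to an aspherical pair $(L,\partial L)$ with a flag triangulation of $\partial L$ yields a closed \emph{aspherical} manifold retracting onto $\pione{L}$, but asphericity is all you get; upgrading it to non-positive curvature is exactly the job of (relative) hyperbolization, which replaces the regular neighbourhood by a different manifold that genuinely carries a piecewise Euclidean locally $\czero$ metric and whose fundamental group contains (indeed retracts onto) $\pione{X}$. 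This is why the paper cites Hu \cite{hu1995retractions} rather than the bare reflection trick. The parts of your argument that do work are the reduction itself --- your permanence observation that failure of $\conc$ passes from $G$ to any overgroup $G'$ is correct, since the witnessing pair $(H,K)$ and the normalization condition are intrinsic to those subgroups --- and your closing remark that $M$ cannot carry a smooth non-positively curved Riemannian metric by Theorem~\ref{thm:hadamard}; but the manifold $M$ itself must be produced by hyperbolization, not by metrizing a regular neighbourhood.
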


\begin{proof}
	For the first statement, we claim that the space $X$ constructed in Definition~\ref{def:LM} admits a piecewise Euclidean structure. To obtain such structure, one chooses an appropriate net inside $X$ and takes the corresponding Voronoi tesselation.
	
	For the second statement, we triangulate $X$ further such that it is a piecewise Euclidean simplicial complex. Then $\pione{X}$ can be embedded as a subgroup of the fundamental group $G'$ of some non-positively curved closed manifold via relative hyperbolization \cite{hu1995retractions}. Clearly $G'$ does not satisfy $\conc$.
\end{proof}

It turns out that LM groups are the only obstructions for a $\czero$ group to satisfy $\conc$ in the following sense.

\begin{prop}
	\label{prop:conc obstruction}
	Let $G$ be a group acting properly on a $\czero$ space $X$ by semisimple isometries. The $\conc$ fails for $G$ if and only if there is a group homomorphism $\eta:G_0\to G$ such that $G_0=\{H,t\mid tH_1t^{-1}=H_2\}$ is a LM group and $\eta|_H$ is injective.
\end{prop}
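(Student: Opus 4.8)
Proposition~\ref{prop:conc obstruction} asserts that $\conc$ fails for $G$ (acting properly by semisimple isometries on a $\czero$ space $X$) if and only if there is a homomorphism $\eta:G_0\to G$ from an LM group $G_0=\{H,t\mid tH_1t^{-1}=H_2\}$ with $\eta|_H$ injective.

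Let me think about each direction.

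**The "if" direction.** Suppose we have such an $\eta$. I want to show $\conc$ fails. The LM group is built precisely so that the generating element $t$ conjugates $H_1$ to $H_2$ via an isometry $\alpha$ whose boundary action has infinite order. So in $G_0$, $t \in N_{G_0}[H]$ (it commensurates $H$), but the image of $t$ under $\Phi$-type maps has infinite order. Since $\eta|_H$ is injective and $\eta(t H_1 t^{-1}) = \eta(H_2)$, the element $\eta(t)$ commensurates $\eta(H)$ in $G$. So $\eta(t) \in N_G[\eta(H)]$. The finitely generated subgroup $K = \langle \eta(H), \eta(t)\rangle \le N_G[\eta(H)]$ should fail to normalize any finite index subgroup of $\eta(H)$, because by Proposition~\ref{prop:normalize} this is equivalent to $\Phi(K)$ being finite, and $\Phi(\eta(t))$ should have infinite order — inheriting the infinite-order boundary rotation built into $\alpha$.

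**The "only if" direction — the main work.** Suppose $\conc$ fails. Then there is a f.g. virtually abelian $H \le G$ (WLOG free abelian by Lemma~\ref{lem:comm invariant}) and a f.g. $K \le \comm{G}{H}$ not normalizing any finite-index subgroup of $H$. By Proposition~\ref{prop:normalize}, $\Phi(K)$ is infinite. So there is some $g \in K \le \comm{G}{H}$ with $\Phi(g)$ of infinite order in $O(n,\R)$. Now $g$ commensurates $H$: setting $H_1 = H \cap \inv{g}Hg$ and $H_2 = gH_1\inv{g} = H \cap gH\inv{g}$, both are finite-index in $H$ and $gH_1\inv{g} = H_2$. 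By Lemma~\ref{lem:comm}, $\phi(g)$ lies in the commensurator of $H$ inside $\Isom(F)$, and its boundary action $\Phi(g)$ has infinite order — exactly the data needed to build an LM group. The plan is to define $G_0 = \{H, t \mid tH_1\inv{t}=H_2\}$ with the isomorphism $H_1 \to H_2$ induced by conjugation by $g$, and set $\eta$ by $\eta|_H = \mathrm{id}$ and $\eta(t) = g$. The homomorphism is well-defined by the universal property of HNN-extensions, since $g$ realizes the required conjugacy in $G$.

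**The obstacle, and how I would address it.** The main obstacle is verifying two things about $G_0$. First, that $G_0$ as constructed genuinely is an LM group in the sense of Definition~\ref{def:LM}: this requires that $\phi(g)$, as an element of the commensurator of $H$ in $\Isom(F)\cong\Isom(\mathbb E^n)$, has infinite-order boundary action, which is exactly the condition $\Phi(g)$ infinite order that we extracted. Here I need $\phi$ to identify $H$ acting on $F$ with $\pione{T}\le\Isom(\mathbb E^n)$ for the torus $T = F/H$, which is immediate. Second, that $\eta$ is well-defined: one must check the defining HNN relation is respected, i.e. that conjugation by $g$ in $G$ restricts to the same isomorphism $H_1 \to H_2$ that Definition~\ref{def:LM} extracts from $\alpha = \phi(g)$. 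This compatibility is the crux — it follows because the map $\phi$ is induced by the genuine conjugation action of $g$ on the parallel set (as shown in the proof of Lemma~\ref{lem:comm}), so the abstract isomorphism $H_1\to H_2$ built into $G_0$ agrees with $h \mapsto ghg^{-1}$. The injectivity of $\eta|_H$ is then automatic since $\eta|_H = \mathrm{id}_H$, completing the construction.
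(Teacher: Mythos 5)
Your overall strategy matches the paper's: both directions pivot on Proposition~\ref{prop:normalize}. Your ``only if'' direction is essentially the paper's own proof --- Proposition~\ref{prop:normalize} gives that $\Phi(K)$ is infinite, one extracts $g\in K$ with $\Phi(g)$ of infinite order, forms the HNN extension along conjugation by $g$, and uses the proof of Lemma~\ref{lem:comm} to recognize it as an LM group with $\alpha=\phi(g)$. The one point you elide there is why an infinite $\Phi(K)\le O(n,\mathbb R)$ must contain an element of infinite order; the paper gets this from Selberg's lemma (a finitely generated subgroup of $O(n,\mathbb R)$ has a finite-index torsion-free subgroup, which is infinite since $\Phi(K)$ is), and some such input is needed because finitely generated infinite torsion groups do exist in general.

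Your ``if'' direction, however, takes a genuinely different route. The paper argues by contradiction \emph{inside} $G_0$: if $\langle\eta(t)\rangle$ normalized a finite-index subgroup $\eta(H')\le\eta(H)$, injectivity of $\eta|_H$ pulls this back to a statement in $G_0$, which contradicts Proposition~\ref{prop:normalize} applied to the action of $G_0$ on $\mathbb E^n\times T$, where the boundary action of $t$ has infinite order by Definition~\ref{def:LM}. You instead work directly in $G$, and your argument hinges on the claim you hedge with ``should'': that $\Phi(\eta(t))$ has infinite order. This is exactly what your route must supply, and it is not formal: the flat $F\subset X$ and the Euclidean space of Definition~\ref{def:LM} are identified only through the abstract isomorphism $\eta|_H$, whose linear extension $J\in GL(n,\mathbb R)$ between the two translation lattices need not be orthogonal. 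To close it: by the equivariance established in the proof of Lemma~\ref{lem:comm}, $\phi(\eta(t))\,\eta(h)\,\phi(\eta(t))^{-1}=\eta(th\inv{t})$ as isometries of $F$ for all $h\in H_1$; comparing translation vectors yields $BJv=JAv$ for every $v$ in the (spanning) translation lattice of $H_1$, where $A$ and $B$ are the linear parts of $\alpha$ and of $\phi(\eta(t))$. Hence $B=JAJ^{-1}$ is conjugate to $A$ in $GL(n,\mathbb R)$ and therefore has infinite order, and Proposition~\ref{prop:normalize} then finishes your argument. With this short lemma added your proof is complete; the trade-off is that the paper's version avoids this transfer computation (using only injectivity of $\eta|_H$), while yours avoids having to move the normalization hypothesis from $G$ back into $G_0$ --- a step which, as the paper writes it, itself needs a word of care, since $tH'\inv{t}$ need not lie in $H$ unless one first intersects $H'$ with $H_1$.
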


\begin{proof}
	We first prove the `if' direction. Given the existence of such $\eta$, we claim the subgroup $K\le G$ generated by $\eta(t)$ cannot normalize any finite index subgroup (in particular $K$ is not the trivial subgroup). If the claim does not hold, then there is a finite index subgroup $H'\le H$ such that $\eta(tH't^{-1})=\eta(H')$. Since $tH't^{-1}$ and $H'$ are contained in $H$ and $\eta|_H$ is injective, we get $tH't^{-1}=H'$. This contradicts the definition of LM groups and Proposition~\ref{prop:normalize}.
	
	Now we prove the `only if' direction. Suppose there is an abelian subgroup $H\le G$ such that a finitely generated subgroup $K\le \comm{G}{H}$ does not normalize any finite index subgroups of $H$. Then Proposition~\ref{prop:normalize} implies that $\Phi(K)$ is infinite. Since $\Phi(K)$ is a finitely generated subgroup of some orthogonal group, by Selberg's lemma \cite{selberg1962discontinuous} $\Phi(K)$ has a finite index torsion free subgroup, which is also infinite. Thus there is $t\in K$ such that $\Phi(t)$ is of infinite order. Choose finite index subgroups $H_1,H_2\le H$ such that $tH_1t^{-1}=H_2$. Let $G_0$ be the HNN--extension of $H$ along the isomorphism between $H_1$ and $H_2$ induced by $t$. Clearly there is a homomorphism $G_0\to G$ which is injective on $H$. It remains to show $G_0$ is an LM group. Let $F\subset X$ be a flat where $H$ acts cocompactly. Let $\phi$ be as in Definition~\ref{def:honolomy}. It follows from the proof of Lemma~\ref{lem:comm} that $\phi(t)$ conjugates $H_1$ to $H_2$ when viewing them as subgroups of $\Isom(F)$, which finishes the proof.
\end{proof}

\section{Examples, comments and questions}
\label{sec:examples}

In this section we discuss several examples in the literature which serve as a comparison to results in other sections. The examples show possible pathological behavior of commensurators. The following is a consequence of an example in Wise's thesis \cite{wise}.
\begin{prop}
	\label{prop:anti torus}
	There exists a torsion free group $G$ acting geometrically on a product of two trees such that there is a $\mathbb Z$--subgroup $H\le G$ whose commensurator $\comm{G}{H}$ is not finitely generated. Moreover, $\comm{G}{H}$ does not normalize any finite index subgroup of $H$.
\end{prop}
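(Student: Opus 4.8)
The plan is to use Wise's irreducible lattice and to reduce both assertions to a single statement about how the normalizers of cyclic powers of a well-chosen element grow. Let $G$ be the torsion-free group from \cite{wise} acting geometrically on a product of two trees $X = T_1 \times T_2$, and let $a = (a_1,a_2) \in G$ be an element whose projection $a_i$ to each factor is a hyperbolic isometry of $T_i$ with axis $\ell_i$. Set $H = \langle a\rangle \cong \Z$. Then $\minset{a^m} = \ell_1 \times \ell_2 \cong \E^2$, with $a^m$ translating along the diagonal and fixing the orthogonal direction.

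First I would pin down the commensurator combinatorially. Since $G$ is a $\czero$ group it satisfies $\conch{1}$ (see Section~\ref{sec:C}), so any relation $g a^k g^{-1} = a^l$ with $k,l \neq 0$ forces $l = \pm k$; hence $g \in \comm{G}{H}$ precisely when $g a^k g^{-1} = a^{\pm k}$ for some $k \neq 0$, i.e.\ when $g$ normalizes $\langle a^k\rangle$. This yields the identity
\[
\comm{G}{H} \;=\; \bigcup_{m \geq 1} N_G(\langle a^m\rangle),
\]
and since $N_G(\langle a^m\rangle) \subseteq N_G(\langle a^{mm'}\rangle)$ the right-hand side is an increasing union indexed by divisibility. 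Each term is finitely generated: $a^m$ is semisimple with $\minset{a^m} \cong \E^2$, so by Theorem~\ref{thm:combinenormalizerminset}.\ref{thm:ruane} the group $N_G(\langle a^m\rangle)$ acts geometrically on $\E^2$ and is therefore virtually $\Z^2$.

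With this in hand, both conclusions reduce to one point: that the chain $\{N_G(\langle a^m\rangle)\}_m$ does not stabilize. Indeed, if it stabilized at some $m_0$ then $\comm{G}{H} = N_G(\langle a^{m_0}\rangle)$ would be virtually $\Z^2$, hence finitely generated and normalizing the finite index subgroup $\langle a^{m_0}\rangle$. Conversely, non-stabilization immediately gives both that $\comm{G}{H}$ is not finitely generated (a finite generating set would lie in a single term of the chain) and that it normalizes no $\langle a^m\rangle$ (otherwise $\comm{G}{H} \subseteq N_G(\langle a^m\rangle) \subseteq \comm{G}{H}$ would force the union to stop at $m$); and every finite index subgroup of $H$ is of the form $\langle a^m\rangle$.

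It remains to choose $a$ so that the normalizers $N_G(\langle a^m\rangle)$, equivalently by Theorem~\ref{thm:combinenormalizerminset}.\ref{thm:centralizer} the centralizers $Z_G(a^m)$, grow without bound; this is where I expect the main difficulty and where Wise's construction is essential. Writing $g=(g_1,g_2)$, membership $g\in Z_G(a^m)$ amounts to $g_i$ commuting with $a_i^m$ in each factor, a condition that genuinely weakens as $m$ grows since $Z_{\Isom(T_i)}(a_i^m)$ can be strictly larger than $Z_{\Isom(T_i)}(a_i)$. The feature of Wise's lattice that makes this effective is that the two projections $p_i\colon G\to\Isom(T_i)$ are non-discrete---the same phenomenon responsible for his anti-tori. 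I would use non-discreteness to produce, for an unbounded set of $m$, an element of $G$ commuting with $a^m$ but with no smaller power of $a$: concretely, an element whose $T_1$-projection fixes an increasingly large neighbourhood of a fundamental domain for $\langle a_1^m\rangle$ on $\ell_1$ while its $T_2$-projection remains compatible with $a_2^m$. Extracting such elements from the explicit combinatorics of \cite{wise} and verifying that they populate infinitely many distinct terms of the chain is the technical crux; granting it, the chain $\{N_G(\langle a^m\rangle)\}$ strictly increases cofinally and both assertions of the proposition follow.
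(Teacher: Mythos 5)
Your reduction is correct as far as it goes: writing $\comm{G}{H}=\bigcup_{m}N_G(\langle a^m\rangle)$ via translation lengths, noting each normalizer is virtually $\Z^2$, and observing that both conclusions of the proposition are equivalent to non-stabilization of this chain. But the proposal breaks at the two points that actually carry the content. First, your choice of element makes the desired conclusion \emph{false}. You take $a=(a_1,a_2)$ hyperbolic in both tree factors, so the endpoints of its axis are interior points of edges of the bipartite graph $\partial_T(T_1\times T_2)$; in the terminology of Definition~\ref{defn:regular and singular}, $H=\langle a\rangle$ is \emph{regular}. For a regular $\Z$--subgroup the commensurator is virtually $\Z^2$, hence finitely generated and normalizing a finite-index subgroup of $H$ --- this is the Ballmann--Brin result \cite[Lemma~7.13]{ballmann1995orbihedra} quoted in Section~\ref{sec:examples}, and one can see it directly: any line parallel to the diagonal axis has the same endpoints at infinity, so its projections to the $T_i$ coincide with $\ell_i$, whence the parallel set of the axis is the single flat $\ell_1\times\ell_2\cong\E^2$; the commensurator preserves it (Lemma~\ref{lem:split}) and acts on it properly, so it is virtually $\Z^2$ by Bieberbach. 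Thus with your $a$ the chain $N_G(\langle a^m\rangle)$ always stabilizes. The pathology can only come from a \emph{singular} $\Z$--subgroup, elliptic on one factor and hyperbolic on the other, which is what the paper uses: $H=\langle c\rangle$ for a vertical loop $c$, whose minimal set is $\mathrm{Fix}(c_1)\times\ell_2$, a tree cross a line rather than a flat.

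Second, even after correcting the choice of $H$, the heart of the matter --- exhibiting elements of $\comm{G}{H}$ that normalize only arbitrarily deep finite-index subgroups of $H$ --- is precisely the step you defer (``the technical crux \dots granting it''), and your heuristic about non-discrete projections is not a proof. The paper supplies this step concretely from \cite{wise}: it doubles Wise's square complex $X$ along the vertical subcomplex $V=a\cup b\cup c$ to form $X'$ (the doubling is essential for manufacturing the elements), and then, by Wise's computation around Figure~10, for every $n>0$ the element $y^n y_1^{-n}\in\pione{X'}$ lies in $\comm{\pione{X'}}{H}$ while the largest subgroup of $H=\langle c\rangle$ it normalizes is $\langle c^{2^n}\rangle$, of index $2^n$. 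With these elements, non-normalization is immediate, and non-finite-generation follows either by your chain argument or, as in the paper, from Proposition~\ref{prop:normalize} together with the observation that $\Phi(\comm{G}{H})$ has order at most two (here $F$ is a line). Without such explicit elements, there is no proof.
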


\begin{proof}
	Let $X$ be the compact non-positively curved square complex defined in \cite[pp.38, Section II.2.1]{wise}. Let $a,b,c,x$ and $y$ be loop in $X$ indicated in \cite[pp.38, Section II.2.1]{wise}. Let $V$ be the subspace of $X$ which is a union of $a,b$ and $c$. Take two copies of $X$ and identify them along $V$ to obtain $X'$ (\cite[Section II.5]{wise}). The universal cover of $X'$ is isomorphic to a product of two trees. We denote edges (which are actually loops) of $X'$ by $a,b,c,x,y,x_1$ and $y_1$. Let $H=\langle c\rangle\le \pione{X'}$ and $K=\comm{\pione{X'}}{H}$. It is clear that $\Phi(K)$ is at most of order two ($\Phi$ is defined in Definition~\ref{def:honolomy}). On the other hand, it follows from \cite[pp.40, Figure 10]{wise} and discussion around there that for any $n>0$, $y^n(y_1)^{-n}\in \comm{\pione{X'}}{H}$ and the biggest subgroup of $H$ normalized by $y^n(y_1)^{-n}$ is of index $2^n$ in $H$. Thus $\comm{\pione{X'}}{H}$ does not normalize any finite index subgroup of $H$, and thus it cannot be finitely generated by Proposition~\ref{prop:normalize}.
\end{proof}

\begin{remark}
	Proposition~\ref{prop:anti torus} shows that the `finitely generated' assumption in Proposition~\ref{prop:normalize}, Theorem~\ref{thm:cubicalpropc} and Proposition~\ref{prop:symmetricspace} cannot be removed.
\end{remark}

Now we discuss another type of irreducible lattices. Let $(p,l)$ be a pair of distinct odd primes and let $\Gam=\Gam_{p,l}$ be the lattice in $PGL_2(\mathbb Q_p)\times PGL_2(\mathbb Q_l)$ defined in \cite[Section 3]{mozes1995actions} and \cite[Chapter 3]{rattaggi2004computations}. It is known that $\Gam_{p,l}$ is torsion free and it acts geometrically on $T_{p+1}\times T_{l+1}$, where $T_k$ denotes the homogeneous tree of degree $k$. The following is a consequence of \cite{RattaggiRobertson}.

\begin{prop}
	\label{prop:highest}
	There exists a torsion free group $\Gam$ acting geometrically on a product of two trees such that there is a $\mathbb Z$--subgroup $H\le \Gam$ which is highest.
\end{prop}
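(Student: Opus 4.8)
The plan is to produce inside $\Gam=\Gam_{p,l}$ a single infinite order element $h$ that translates hyperbolically in both tree factors, and to show that the cyclic group $H=\langle h\rangle$ cannot be virtually contained in any rank two free abelian subgroup. Since $\Gam$ is torsion free and acts geometrically on $X=T_{p+1}\times T_{l+1}$, every element is semisimple and every free abelian subgroup has rank at most $2$; thus $H\cong\Z$ is highest precisely when no power of $h$ lies in a subgroup isomorphic to $\Z^2$.

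First I would take $h=(h_1,h_2)\in\Gam$, where $h_i$ denotes the image under the projection to $PGL_2(\mathbb Q_p)$, respectively $PGL_2(\mathbb Q_l)$, chosen so that each $h_i$ is a hyperbolic isometry of the corresponding tree. Then $\minset{h}=\mathrm{Axis}(h_1)\times\mathrm{Axis}(h_2)=:A_1\times A_2$ is a $2$-flat $F'$, and $h$ translates along the diagonal line $F\subset F'$. Consequently $H$ acts cocompactly on the $1$-flat $F$, which is contained in the strictly larger flat $F'$; this already realises the geometric phenomenon described in the remark preceding the statement.

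Next I would establish that $H$ is highest. Assume for contradiction that $h^k\in A$ for some $k\neq 0$ and some $A\le\Gam$ with $A\cong\Z^2$. By the Flat Torus Theorem $A$ acts cocompactly on a $2$-flat $F_A$ contained in its minimal set, and $h^k\in A$ yields $F_A\subseteq\minset{A}\subseteq\minset{h^k}=\minset{h}=F'$; since a $2$-flat contained in another $2$-flat must coincide with it, $F_A=F'$. Thus the $A$-invariant flat $F'=A_1\times A_2$ carries a cocompact $\Z^2$ action by translations. The crucial point is that in a tree every hyperbolic isometry has integral translation length, so the homomorphism assigning to an element of $A$ its pair of translation lengths along $A_1$ and $A_2$ has image a finite index subgroup of $\Z^2$. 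Such a subgroup meets each coordinate axis nontrivially, so after passing to suitable powers $A$ contains a ``horizontal'' element $a$ (hyperbolic on $T_{p+1}$, fixing $A_2$) and a ``vertical'' element $b$ (hyperbolic on $T_{l+1}$, fixing $A_1$) with $[a,b]=1$. Such a commuting pair is exactly a genuine torus supported on $F'$.

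The hard part, and the step where I would invoke \cite{RattaggiRobertson}, is to arrange the flat $F'$ (equivalently, to choose $h$) so that no commuting horizontal--vertical pair translates along it, i.e.\ so that the transversal hyperbolic elements supported on $F'$ generate an anti-torus. This non-commutation is precisely what the Rattaggi--Robertson computation in $\Gam_{p,l}$ provides: the relevant pair generates a non-abelian (indeed free) subgroup rather than a $\Z^2$. Given this input the pair $a,b$ above cannot exist, contradicting the existence of $A$ and proving that $H$ is highest. The main obstacle is therefore not the geometric reduction but the bookkeeping needed to identify the flat $F'=\minset{h}$ with the specific configuration analysed in \cite{RattaggiRobertson} and to quote their result in exactly the form required here.
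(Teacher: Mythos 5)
Your geometric reduction via the Flat Torus Theorem is essentially sound: if $H=\langle h\rangle$ with $h$ hyperbolic in both factors fails to be highest, then (after passing to a finite-index subgroup of $A$ to rule out reflections, using torsion-freeness) the flat $F'=\minset{h}=A_1\times A_2$ carries a commuting pair $a,b$ with $a$ translating $A_1$ and fixing $A_2$ pointwise, and $b$ vice versa. The fatal problem is the step you defer to \cite{RattaggiRobertson}: it is not merely hard, it is false. For a \emph{regular} element $h$ (hyperbolic on both trees) such a commuting pair always exists, so no choice of $h$ of the type you prescribe can work. This paper itself quotes the relevant fact in Section~\ref{sec:examples}: by \cite[Lemma 7.13]{ballmann1995orbihedra}, for any group acting geometrically and cellularly on a product of two infinite trees, the commensurator of a regular $\Z$--subgroup is virtually $\Z^2$; in particular a finite-index subgroup of your $H$ lies inside a rank two free abelian subgroup, so $H$ is never highest. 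Concretely for $\Gam_{p,l}$: the centralizer of a hyperbolic--hyperbolic element is commensurable with the group of $\{p,l,\infty\}$--units of an imaginary quadratic field in which both $p$ and $l$ split, modulo scalars, and Dirichlet's $S$--unit theorem makes this virtually $\Z^2$. The anti-tori of Rattaggi--Robertson live on flats spanned by the axes of a horizontal and a vertical element whose powers never commute; such a flat is never the minimal set of a regular element of the lattice, so their non-commutation results say nothing about the flats $F'$ that your argument produces.

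The paper's proof runs in the opposite direction. Rattaggi--Robertson supply two facts: (i) $\Gam_{p,l}$ is \emph{commutative transitive}, which implies that every maximal abelian subgroup is highest (if a finite-index subgroup of a maximal abelian $M$ lay in an abelian $A$ of higher rank, commutative transitivity would make $\langle M,A\rangle$ abelian, forcing $A\le M$); and (ii) some $\Gam_{p,l}$ contains a maximal abelian subgroup isomorphic to $\Z$, generated by a \emph{singular} element --- hyperbolic on one tree and elliptic on the other. This inversion is exactly the point of the remark preceding the statement: the highest $\Z$--subgroup acts cocompactly on a $1$--flat that is contained in $2$--flats, i.e.\ highestness is an algebraic condition that is invisible to the flat geometry. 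Your proposal tries to realize highestness on a geometrically regular element, which is precisely the one situation in which it cannot occur.
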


\begin{proof}
	Let $\Gam=\Gam_{p,l}$ be as above. The key property which we need, proven in \cite[Corollary 2.2]{RattaggiRobertson}, is that $\Gam$ is \emph{commutative transitive}. Recall that a group is commutative transitive if the relation of commutativity is transitive on its non-trivial elements. This property implies that if $H$ is a maximal abelian subgroup in the sense that $H$ is not properly contained in another abelian subgroup, then $H$ is highest. However, there exists an example of $\Gam_{p,l}$ which has a maximal abelian subgroup isomorphic to $\mathbb Z$ \cite[Corollary 3.7 and Example 3.8]{RattaggiRobertson}. Thus the proposition follows.
\end{proof}

\begin{cor}
	\label{cor:commensurator computation}
	Let $\Gam=\Gam_{p,l}$ and let $H\le \Gam$ be a non-trivial abelian subgroup. Then the commensurator of $H$ in $\Gam$ is isomorphic to either $\mathbb Z$ or $\mathbb{Z}^2$.
\end{cor}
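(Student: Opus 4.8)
The plan is to exploit the one special feature of $\Gam=\Gam_{p,l}$ isolated in the proof of Proposition~\ref{prop:highest}: it is torsion free, acts geometrically on the two--dimensional $\czero$ cube complex $T_{p+1}\times T_{l+1}$, and above all it is \emph{commutative transitive}. Commutative transitivity has two standard consequences I will use throughout: for every nontrivial $x\in\Gam$ the centralizer $Z_\Gam(x)$ is abelian (if $a,b$ each commute with $x$ then $a,b$ commute), and it is the \emph{unique} maximal abelian subgroup containing $x$. Fix a nontrivial $h\in H$ and set $A=Z_\Gam(h)$; since $H$ is abelian we have $H\le A$, and $A$ is the maximal abelian subgroup containing $H$. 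By the Flat Torus Theorem and the Ascending Chain Condition \cite[Theorem~II.7.5]{bh}, every abelian subgroup of $\Gam$ is finitely generated free abelian of rank at most $2=\dim X$; in particular $A\cong\Z$ or $A\cong\Z^2$, and it suffices to prove that $\comm{\Gam}{H}$ is abelian, for then it is a nontrivial torsion--free abelian subgroup of $\Gam$, hence free abelian of rank $1$ or $2$.

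The first step is to show $\comm{\Gam}{H}\le N_\Gam(A)$, so that $A$ is normal in $\comm{\Gam}{H}$ (note $A\le Z_\Gam(H)\le\comm{\Gam}{H}$ already). Let $g\in\comm{\Gam}{H}$. Since $H$ is infinite and $gHg^{-1}$ is commensurable with $H$, the intersection $gHg^{-1}\cap H$ is nontrivial; pick $e\neq k$ in it. Then $k\in H\le A$, so by maximality $A=Z_\Gam(k)$, and likewise $k'=g^{-1}kg\in H\le A$ gives $A=Z_\Gam(k')$. Conjugating, $gAg^{-1}=gZ_\Gam(k')g^{-1}=Z_\Gam(gk'g^{-1})=Z_\Gam(k)=A$, so $g$ normalizes $A$, as desired.

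The heart of the argument is to show that $g^2\in A$ for every $g\in\comm{\Gam}{H}$. Conjugation by $g$ now defines an automorphism $M$ of $A$. Because $g$ commensurates $H$, the map $M$ preserves the rational line spanned by $H$ in $A\otimes\mathbb Q$; as $M\in\Aut(A)=GL_{\rk{A}}(\Z)$ carries primitive vectors to primitive vectors, a primitive generator $v$ of that line satisfies $Mv=\pm v$. Hence $g^2$ commutes with $v$. Since $v$ is a nontrivial element of the abelian group $A$, commutative transitivity propagates this: $g^2$ commutes with $v$ and $v$ commutes with every $w\in A$, so $g^2$ commutes with all of $A$, i.e.\ $g^2\in Z_\Gam(A)=A$ (the last equality again by maximality of $A$). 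Applying commutative transitivity once more finishes the proof that $\comm{\Gam}{H}$ is abelian: for $g,g'\in\comm{\Gam}{H}$ the elements $g^2,g'^2\in A$ commute and are nontrivial (as $\Gam$ is torsion free), so $g$ commutes with $g^2$, which commutes with $g'^2$, which commutes with $g'$, forcing $[g,g']=e$.

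Combining the two steps, $\comm{\Gam}{H}$ is a nontrivial torsion--free abelian subgroup of $\Gam$ of rank at most $2$, hence isomorphic to $\Z$ or $\Z^2$. I expect the main obstacle to be the $g^2\in A$ step: in a general group acting on a product of two trees the commensurator can fail badly (it need not even be finitely generated, cf.\ Proposition~\ref{prop:anti torus}), so the argument must use commutative transitivity in an essential way to convert the single commensuration constraint $Mv=\pm v$ into centralization of the whole maximal abelian subgroup $A$. A secondary point requiring care is the bookkeeping that identifies $A=Z_\Gam(h)$ as genuinely maximal abelian and verifies $Z_\Gam(A)=A$; alternatively, one could reach the conclusion that $\comm{\Gam}{H}$ is virtually abelian by invoking Theorem~\ref{thm:cubicalpropc} (or Proposition~\ref{prop:highestvab} when $H$ is highest) and then apply the commutative--transitivity argument of the last paragraph, but the direct route above seems cleanest.
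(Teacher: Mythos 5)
Your ``direct route'' is correct when $\rk{H}=1$: the identification of $A=Z_\Gam(h)$ as the unique maximal abelian subgroup containing $H$, the argument that $\comm{\Gam}{H}$ normalizes $A$, the primitive-vector step giving $g^2\in A$, and the final commutative-transitivity chain all check out. But the step you yourself flagged as the main obstacle genuinely fails when $H$ has rank $2$ (which the statement allows). In that case $H$ spans all of $A\otimes\mathbb Q$, not a line, so there is no ``primitive generator of that line,'' and the commensuration hypothesis puts no constraint at all on $M$: since $H$ then has finite index in $A$, \emph{every} automorphism of $A$ commensurates $H$. Moreover, no amount of extra commutative transitivity can repair this, because your inputs at that point are purely algebraic (torsion-freeness, commutative transitivity, rank at most $2$), and these do not suffice: the group $\Z^2\rtimes_M\Z$ with $M=\bigl(\begin{smallmatrix}2&1\\1&1\end{smallmatrix}\bigr)$ Anosov is torsion free, commutative transitive (the centralizer of a nontrivial fiber element is $\Z^2$, while the centralizer of any element outside the fiber meets $\Z^2$ trivially and hence embeds into $\Z$), and all of its abelian subgroups have rank at most $2$; yet the commensurator of $H=\Z^2$ is the whole non-abelian group, and no nonzero power of the stable letter lies in $A=\Z^2$. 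So in rank $2$ some $\czero$ geometry must enter; pure algebra cannot force $g^2\in A$.

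The repair is cheap given what you have already proved. Once your second step puts $g\in N_\Gam(A)$, observe that $A$ is a finitely generated semisimple free abelian subgroup (the action of $\Gam$ on $T_{p+1}\times T_{l+1}$ is geometric and cellular), so Theorem~\ref{thm:combinenormalizerminset}.\ref{thm:centralizer} gives $[N_\Gam(A):Z_\Gam(A)]<\infty$; hence some power $g^n$ with $n\neq 0$ lies in $Z_\Gam(A)=A$, and your final paragraph runs verbatim with $g^n$ in place of $g^2$, uniformly in the rank. (Alternatively, a rank-$2$ subgroup is finite index in $A$, hence highest, so Proposition~\ref{prop:highestvab} disposes of that case.) For comparison, the paper's own proof is essentially the ``alternative'' you mention in closing: it invokes Theorem~\ref{thm:cubicalpropc} to get that every finitely generated $K\le\comm{\Gam}{H}$ normalizes a finite-index subgroup $H'\le H$, then Theorem~\ref{thm:combinenormalizerminset}.\ref{thm:centralizer} to get that each element of $K$ has a nontrivial power centralizing $H'$, then commutative transitivity twice (first to make $K$ abelian, then to pass to the ascending union). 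Your normalize-the-maximal-abelian-subgroup route, once patched as above, is a genuine alternative that bypasses the hyperplane combinatorics behind Theorem~\ref{thm:cubicalpropc}; but the patch it needs is exactly the normalizer-versus-centralizer finiteness that the paper also relies on.
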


\begin{proof}
By Theorem~\ref{thm:cubicalpropc}, $H$ satisfies $\conc$. Let $K\le \comm{\Gam}{H}$ be a finitely generated subgroup. Then $K$ normalizes a finite index subgroup $H'\le H$. By Theorem~\ref{thm:combinenormalizerminset}.\ref{thm:centralizer}, the subgroup $\langle K,H'\rangle$ has a finite index subgroup that centralizes $H'$. Thus each $k\in K$ has a non-trivial power which centralizes $H'$. Then the commutative transitivity implies that $K$ is abelian. It follows that $\comm{\Gam}{H}$ is a countable union of abelian subgroups, and thus it is abelian by commutative transitivity. Then the corollary follows.
\end{proof}

\begin{defn}
	\label{defn:regular and singular}
Let $G$ be a group acting geometrically and cellularly on a product of infinite trees $T\times T'$. The Tits boundary of $T\times T'$ is a complete bipartite graph. A $\mathbb Z$--subgroup $H\le G$ is \emph{regular} if the two boundary points of an axis of $H$ are not vertices of the graph $\partial_T (T\times T')$, otherwise $H$ is \emph{singular}. 
\end{defn}

Let $G$ be as in Definition~\ref{defn:regular and singular}. Then $G$ contains both a regular $\mathbb Z$--subgroup and a singular $\mathbb Z$--subgroup \cite[Lemma 8.8]{ballmann1995orbihedra}. Moreover, the commensurator of a regular $\mathbb Z$--subgroup is virtually $\mathbb{Z}^2$ \cite[Lemma 7.13]{ballmann1995orbihedra}.

In the special case where $G$ is $G_{p,l}$ defined above, the commensurator of a singular $\mathbb Z$--subgroup is either $\mathbb Z$ or $\mathbb{Z}^2$. In particular, for a singular $\mathbb Z$--subgroup $H$, the commensurator $\comm{\Gam}{H}$ never acts cocompactly on the parallel set of the axis of $H$. 
This is very different from the case of virtually compact special actions, where the algebraic properties of $\comm{\Gam}{H}$ are always compatible with the geometry of the space on which $\Gam$ acts on. More precisely:

\begin{prop}
	\label{prop:compactible}
	Suppose $W$ is a compact virtually special cube complex. Then for any abelian subgroup $H\le \pione{W}$, the commensurator of $H$ acts cocompactly on the parallel set  $P_F$, where $F$ is a flat in the universal cover $\widetilde W$ stabilized by $H$ such that $H\acts F$ is cocompact.
\end{prop}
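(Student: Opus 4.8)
The plan is to combine Corollary~\ref{cor:centralizer} and Theorem~\ref{thm:combinenormalizerminset} with the fact that, for special cube complexes, the minimal set of a flat agrees with its \emph{entire} parallel set. First I would reduce to a convenient situation. Since $W$ is virtually special there is a finite index torsion free $G'\le G=\pione{W}$ with $\widetilde W/G'$ special; as $\comm{G'}{H'}$ has finite index in $\comm{G}{H}$ for $H'=H\cap G'$, and cocompactness of an action is inherited by finite index overgroups, it suffices to treat $G'$. So I assume $G$ itself is special and $H$ is free abelian (passing to a finite index free abelian subgroup changes neither $\comm{G}{H}$ nor $F$). Write $P_F=F\times F^{\perp}$ and let $Y\subseteq F^{\perp}$ be determined by $\minset{H}=F\times Y$. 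By Corollary~\ref{cor:centralizer} we have $\comm{G}{H}=N_G(H'')$ for some finite index $H''\le H$, and since $G$ acts geometrically on $\widetilde W$, Theorem~\ref{thm:combinenormalizerminset}.\ref{thm:ruane} shows that $N_G(H'')$ acts cocompactly on $\minset{H''}$. As $H''$ and $H$ share the flat $F$, one has $\minset{H''}=F\times Y''$ with $Y\subseteq Y''\subseteq F^{\perp}$, so the whole statement reduces to the claim that $\minset{H''}=P_F$, equivalently that the factor action of $H''$ on $F^{\perp}$ (Lemma~\ref{lem:split}) is trivial: no element of $H''$ ``rotates'' the directions parallel to $F$.

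The crux is therefore this no-rotation statement, and I expect it to be the only genuinely hard point. I would prove it using the defining feature of special cube complexes. By Haglund--Wise the special structure provides a local isometry from $\widetilde W/G$ to the Salvetti complex $S_A$ of a right-angled Artin group $A$, which lifts to an embedding $\widetilde W\hookrightarrow \widetilde{S_A}$ onto a convex subcomplex, equivariant for an injection $G\hookrightarrow A$. In $\widetilde{S_A}$ every element acts without rotation: for a hyperbolic $w\in A$ the minimal set $\minset{w}$ coincides with the parallel set of its axis, and $w$ acts as a translation on the Euclidean factor and as the identity on the orthogonal complement. Applying this to each $h\in H''\le A$ gives $P_F\subseteq\minset{h}$ computed in $\widetilde{S_A}$; intersecting with the convex subcomplex $\widetilde W$ (which contains $F$) yields $P_F\subseteq\minset{h}$ inside $\widetilde W$ for every $h\in H''$, hence $P_F\subseteq\minset{H''}$. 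Since always $\minset{H''}\subseteq P_F$, we conclude $\minset{H''}=P_F$.

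An alternative route to the same no-rotation property is to extract it directly from the unique root property established before Corollary~\ref{cor:centralizer}: a nontrivial factor action on $F^{\perp}$ would necessarily be of infinite order (a finite order rotational part could be removed by passing to powers), and combined with a translation along $Y$ supplied by the cocompact action furnished above, it would produce an anti-torus of precisely the kind that unique roots forbid. Either way, once $\minset{H''}=P_F$ is in hand, the proof finishes immediately: $\comm{G}{H}=N_G(H'')$ acts cocompactly on $\minset{H''}=P_F$, which is the assertion.

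I would flag that the hardest and most essential ingredient is the no-rotation step. The example $\Gamma_{p,l}$ shows that $\comm{G}{H}$ centralizing $H$ and acting cocompactly on $\minset{H}$ is \emph{not} enough to force cocompactness on $P_F$: there the orthogonal factor action carries an infinite order elliptic part, so $Y\subsetneq F^{\perp}$ and the parallel set is strictly larger than the minimal set. What rules this out in the special case is global rather than local; a link-based argument at a fixed vertex fails because geodesics branch in a cube complex, so one really must invoke the special (equivalently, the unique root) structure to eliminate the rotation.
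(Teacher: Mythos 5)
Your main route is correct and is essentially the paper's own approach: reduce to $W$ special, use the Haglund--Wise local isometry from $W$ to a compact Salvetti complex $S_A$ \cite{haglund2008special}, and then invoke structure theory of right-angled Artin groups. One caveat: your key claim --- that in $\widetilde{S_A}$ the minimal set of every hyperbolic $w\in A$ equals the parallel set of its axis, with $w$ acting as translation times identity --- is not a formal consequence of specialness or of the convex embedding; it is exactly Servatius' centralizer theorem \cite{servatius1989automorphisms} in geometric form, which is what the paper cites, so it should be cited rather than asserted as self-evident. Granting that input, your write-up is in fact tighter than the paper's sketch at the transfer step: the paper transfers \emph{cocompactness} of the centralizer on the ambient parallel set back to $\widetilde W$ (a global statement that does not formally restrict to a convex invariant subspace), whereas you transfer the \emph{pointwise} no-rotation statement through the convex embedding $\widetilde W\hookrightarrow\widetilde{S_A}$, which does restrict, and then recover cocompactness intrinsically from Corollary~\ref{cor:centralizer} together with Theorem~\ref{thm:combinenormalizerminset}.\ref{thm:ruane} applied to the geometric action $G\acts\widetilde W$. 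That makes the paper's ``now the lemma follows'' precise.

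However, your ``alternative route'' via unique roots is a genuine error, and so is the closing parenthetical ``the special (equivalently, the unique root) structure''. The unique root property controls rotations of $F$ by commensurating elements --- that is all Corollary~\ref{cor:centralizer} extracts from it, and its conclusion is only $\comm{G}{H}=N_G(H'')$ --- but it does not rule out a nontrivial factor action of $H''$ on $F^{\perp}$. The paper itself contains a counterexample: the lattices $\Gam_{p,l}$ of Section~\ref{sec:examples} are torsion free and commutative transitive \cite{RattaggiRobertson}, and these two properties imply unique roots (if $x^n=y^n\neq 1$, then $x$ and $y$ each commute with $x^n$, hence with each other by commutative transitivity, hence $(xy^{-1})^n=1$ and $x=y$); they act geometrically and cubically on a product of trees; yet the commensurator of a singular $\mathbb{Z}$--subgroup $H$ never acts cocompactly on the parallel set of its axis. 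In particular, for such $H$ the minimal set must be a proper subset of the parallel set --- otherwise Theorem~\ref{thm:combinenormalizerminset}.\ref{thm:ruane} would force $Z_G(H)$ to act cocompactly on it --- so the no-rotation property genuinely fails in a group with unique roots, and any argument deriving it from unique roots (plus properness, cocompactness and cubicality) must be wrong. This is also the content of the paper's final remark contrasting Corollary~\ref{cor:centralizer} with Proposition~\ref{prop:compactible}: cocompactness on $P_F$ is the extra dividend of genuine specialness, not of unique roots. Keep the main Servatius-based route and delete the alternative.
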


\begin{proof}[Sketch of a proof]
Without loss of generality we can assume that $W$ is special. Since there is a local isometric embedding from $W$ to a compact Salvetti complex of some right-angled Artin group, it reduces to proving the lemma in the case where $W$ is a Salvetti complex. In this case, it follows from Servatius' centralizer theorem \cite[Section III]{servatius1989automorphisms} that the centralizer of an abelian subgroup $H$ of a right-angled Artin group acts cocompactly on the parallel set of $F$ in $\widetilde W$, where $F$ is a flat in $\widetilde W$ stabilized by $H$ such that $H\acts F$ cocompact. Now the lemma follows.
\end{proof}

Recall that for a group $G$ acting geometrically and cellularly on a product of two trees, the action is reducible if and only if the quotient cube complex is virtually special \cite{wise}. This together with Corollary~\ref{cor:commensurator computation} and Lemma~\ref{prop:compactible} naturally leads to the following question, which is a variant of a question by Wise on whether irreducible actions always give rise to an anti-torus.

\begin{ques}
	Suppose $G$ acts geometrically and cellularly on a product of two trees $X$. Suppose for each singular $\mathbb Z$--subgroup $H\le G$, the commensurator of $H$ acts cocompactly on the parallel set of an axis of $H$. Is $G$ reducible, i.e.,\ is $G$ commensurable to a product of two free groups?
\end{ques}

We also ask whether one can find examples similar to Proposition~\ref{prop:highest} in the world of symmetric spaces.

\begin{ques}\label{ques:notorious}
Let $G$ be a cocompact lattice in $SL(3,\mathbb R)$. Can $G$ contain a highest $\mathbb Z$--subgroup?
\end{ques}


One readily verifies that if such $\mathbb Z$--subgroup exists, then it is generated by a matrix $M$ in $SL(3,\mathbb R)$ such that
\begin{enumerate}
	\item $M$ has one real eigenvalue and two complex eigenvalues;
	\item the real eigenvalue is not $1$ or $-1$;
	\item the rotation induced by the pair of complex eigenvalues has irrational angle.
\end{enumerate}

However, we do not know whether such matrix can live inside a cocompact lattice, though we speculate that the answer is positive.

\begin{remark} Note that Theorem~\ref{thm:hadamard} and Corollary~\ref{cor:centralizer} have the same conclusion for $\comm{G}{H}$. However, the geometry of the action of $\comm{G}{H}$ on $P_F$ could be quite different. For virtually compact special actions, the action of $\comm{G}{H}$ on $P_F$ is geometric (Proposition~\ref{prop:compactible}). We speculate that this is not the case for geometric actions on Hadamard manifolds. In particular, a positive answer to Question~\ref{ques:notorious} would give an example of a non-cocompact action of $\comm{G}{H}$ on $P_F$ (consider the action on the associated symmetric space). 
\end{remark}
	

\bibliographystyle{alpha}
\bibliography{1}

\end{document}